\newcommand{\partialbar}{\bar{\partial}}
\newcommand{\zbar}{\bar{z}}
\newcommand{\Ric}{\mathrm{Ric}}
\newcommand{\Vol}{\mathrm{Vol}}
\newcommand{\V}{\mathrm{V}}
\newcommand{\sq}{\backslash}
\newcommand{\diam}{\mathrm{diam}}
\newcommand{\linebundle}{\mathcal{L}}
\newcommand{\dist}{\mathrm{dist}}
\newtheorem{thm}{Theorem}[section]
\newtheorem{prop}[thm]{Proposition}
\newtheorem{coro}[thm]{Corollary}
\newtheorem{lmm}[thm]{Lemma}
\newtheorem{exap}[thm]{Example}
\newtheorem{conj}[thm]{Conjecture}
\theoremstyle{remark} 
\newtheorem*{rmk}{Remark}
\title{	The asymptotic behaviour of Bergman kernels}
\author{Shengxuan Zhou}
\address{Beijing International Center for Mathematical Research\\
Peking University\\
Beijing\\100871\\ China}
\email{zhoushx19@pku.edu.cn}
\begin{document}

\begin{abstract}
Let $\left( X ,d ,p \right) $ be the pointed Gromov-Hausdorff limit of a sequence of pointed complete polarized K\"ahler manifolds $\left( M_l ,\omega_l ,\linebundle_l ,h_l ,p_l \right) $ with $\Ric \left( h_l \right) =2\pi \omega_l $, $\Ric \left( \omega_l \right) \geq -\Lambda \omega_l $ and $\Vol \left( B_1 \left( p_l \right) \right) \geq v $, $\forall l\in\mathbb{N} $, where $\Lambda ,v>0$ are constants. Then $X$ is a normal complex space \cite{lggs1}.

In this paper, we discuss the convergence of the Hermitian line bundles $( \linebundle_l ,h_l )$ and the Bergman kernels. In particular, we show that the K\"ahler forms $\omega_l $ converge to a unique closed positive current $\omega_X $ on $X_{reg}$. By establishing a version of $L^2$ estimate on the limit line bundle on $X$, we give a convergence result of Fubini-Study currents on $X$. Then we prove that the convergence of Bergman kernels implies a uniform $L^p$ asymptotic expansion of Bergman kernel on the collection of $n$-dimensional polarized K\"ahler manifolds $(M,\omega ,\linebundle ,h)$ with Ricci lower bound $-\Lambda$ and non-collapsing condition $\Vol\left( B_1 (x) \right) \geq v >0 $. Under the additional orthogonal bisectional curvature lower bound, we will also give a uniform $C^0$ asymptotic estimate of Bergman kernel for all sufficiently large $m$, which improves a theorem of Jiang \cite{jws1}. By calculating the Bergman kernels on orbifolds, we disprove a conjecture of Donaldson-Sun in \cite{donsun1}.
\end{abstract}

\maketitle

\tableofcontents

\section{Introduction}
\label{intro}

Let $X$ be a reduced normal complex space, $X_{reg} $ be the set of regular points of $X$, $\mu $ be a Radon measure defined on $X_{reg} $, $X_{sing} =X\sq X_{reg} $, and $\linebundle $ be a holomorphic line bundle on $X_{reg} $ with a continuous Hermitian metric $h$. Then we can define the Bergman space for each $m\in\mathbb{N}$ as the following linear space
$$ H^0_{L^2} \left( X ,\mathcal{L}^m \right) = \left\lbrace s\in H^0 \left( X_{reg} ,\linebundle^m \right) : \int_{X_{reg}} \left\Vert s \right\Vert_{h^m}^2 d\mu < \infty \right\rbrace ,$$
with an $L^2 $ inner product $\left\langle s_1 , s_2 \right\rangle_{L^2 ;X,\linebundle ,h,m} = \int_{X_{reg}} \left\langle s_1 , s_2 \right\rangle_{h^m}^2 d\mu $ and an $L^2$ norm $\left\Vert s \right\Vert_{L^2 ;X,\linebundle ,h,m} = \left( \left\langle s,s \right\rangle_{L^2 ;X,\linebundle,h,m} \right)^{\frac{1}{2}} $. Sometimes we denote them briefly by $\left\langle s_1 , s_2 \right\rangle_{L^2} $ and $\left\Vert s \right\Vert_{L^2} $, respectively. Hence the Bergman kernel can be defined as
$$ \rho_{X, \mu ,\linebundle,h,m} (x) = \sup_{\left\Vert s \right\Vert_{L^2} =1 }  \left\Vert s(x) \right\Vert_{h^m}^2 ,\;\; \forall x\in X_{reg}. $$
The Bergman kernels will also be abbreviated as $\rho_{m}$ when we do not emphasize the structure on $X$.

An important case of Bergman kernel is when $(M,\omega ,\linebundle ,h)$ is a polarized manifold. In this case, $M$ is an $n$-dimensional complex manifold, $ \omega $ is a complete K\"ahler metric on $M$, $\linebundle $ is a holomorphic line bundle on $M$ equipped with a Hermitian metric $h$ whose curvature form is $2\pi\omega $, and the measure $\mu $ on $M$ is obtained by the volume form $ \frac{\omega^n}{n!} $. We say that the Hermitian holomorphic line bundle $(\linebundle ,h)$ is a polarization of $(M,\omega ) $ in this case.

The Bergman kernel plays an important role in K\"ahler geometry. In the pioneering work \cite{tg1}, Tian used his peak section method to prove that Bergman metrics converge to the original polarized metric in the $C^2$-topology, and the K\"ahler potential of the Bergman metric is the Bergman kernel. By a similar method, Ruan \cite{wdr1} proved that this convergence is $C^\infty $. Later, Zelditch \cite{sz1}, also Catlin \cite{cat1} independently, used the Szegö kernel to obtain an alternative proof of the $C^\infty $-convergence of Bergman metrics and they gave the following asymptotic expansion of Bergman kernel on a fixed polarized K\"ahler manifold $M$:
\begin{eqnarray}
\rho_{\omega  ,m} (x) \sim m^n +a_1 (x) m^{n-1} +a_2 (x) m^{n-2} +\cdots ,\label{tyzasymptoticexpansion}
\end{eqnarray}
where $ \rho_{\omega ,m} $ is the Bergman kernel on $(M,\omega ,\linebundle ,h )$, and $a_j $ are smooth coefficients. This expansion can be also obtained by Tian's peak section method (see \cite{liuzql1}) and is often called Tian-Yau-Zelditch expansion. By using the heat kernel, Dai-Liu-Ma \cite{dailiuma1} gave another proof of the Tian–Yau–Zelditch expansion, and moreover, they also considered the asymptotic behavior of Bergman kernels on symplectic manifolds and Kähler orbifolds (see also Ma-Marinescu’s book \cite{mamari1}). Lu \cite{zql1} used the peak section method to calculate the lower order terms in the asymptotic expansion of Bergman kernels. In particular, $a_1 = \frac{S(\omega )}{2} $, where $S(\omega )$ is the scalar curvature of $(M,\omega )$. There are many important applications of the Tian–Yau–Zelditch expansion, for example, \cite{don1} and \cite{kwz2}.

Let $\left( X ,d ,p \right) $ be the Gromov-Hausdorff limit of a sequence of pointed complete polarized K\"ahler manifolds $\left( M_l ,\omega_l ,p_l \right) $ with $\Ric \left( \omega_l \right) \geq -\Lambda \omega_l $ and $\Vol \left( B_1 \left( p_l \right) \right) \geq v $, $\forall l\in\mathbb{N} $, where $\Lambda ,v>0$ are constants. Recall that Liu-Sz\'ekelyhidi \cite{lggs1} has proved that there exists a subsequence of $\left( M_l ,\omega_l ,p_l \right) $ such that the complex structures converge to a normal complex space structure on $X$. Without loss of generality, we can assume that the complex structures of $M_l $ converge to a normal complex space structure on $X$. Then we can define the Bergman kernel on $X$. 

Our first result describes the convergence of the polarizations and Bergman kernels. It can be seen as a metric version of the flatness for families
of projective varieties when $X$ is compact. Note that the following result is proved in \cite{tg5, tg7, tg6} when the sequence of K\"ahler manifolds are K\"ahler-Einstein manifolds, and in \cite{tg3} when they are conic K\"ahler-Einstein manifolds.

\begin{thm}
\label{thmcontinuousbergmankernel}
Let $\left( \linebundle_l ,h_l \right) $ be a sequence of Hermitian line bundles on $M_l $ such that $\Ric \left( h_l \right) = 2\pi \omega_l $. Then we can find a sequence of integers $l_j \to\infty $, such that $\left( \linebundle_{l_j} , h_{l_j} \right) $ converges to a holomorphic line bundle $\left( \linebundle_{\infty } , h_{\infty } \right) $ on $X_{reg } $, and for each $x\in X$, there exists an integer $D\in\mathbb{N}$, such that $\left( \linebundle_\infty^D ,h_\infty^D \right) $ can be extended to a Hermitian holomorphic line bundle on a neighborhood of $x\in X$. 

Moreover, for any $m> \frac{\Lambda}{2\pi} $, the Bergman kernels $\rho_{ M_{l_j} ,\mu , \linebundle_{l_j} , h_{l_j} , m} $ converge to the Bergman kernel $\rho_{X,\mu ,\linebundle_\infty ,h_\infty ,m} $, where $\mu $ is the $2n$-dimensional Hausdorff measure of the corresponding space.
\end{thm}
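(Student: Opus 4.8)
The plan is first to fix the setup on $X_{reg}$. By Cheeger--Colding--Tian theory together with \cite{lggs1}, after passing to a subsequence one may exhaust $X_{reg}$ by compact sets and choose diffeomorphisms from neighbourhoods in $M_{l_j}$ under which $\omega_{l_j}\to\omega_X$ and the complex structures $J_{l_j}\to J_X$ in $C^\infty_{loc}(X_{reg})$. Using local holomorphic frames, a gauge-fixing argument, and a diagonal extraction, the Chern connections and Hermitian metrics of the $\linebundle_{l_j}$ then converge, yielding a Hermitian holomorphic line bundle $(\linebundle_\infty,h_\infty)$ on $X_{reg}$ with $\Ric(h_\infty)=2\pi\omega_X$. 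To obtain the local extension across $X_{sing}$ I would use the structure theory of $X$ from \cite{lggs1} (normality, $\mathrm{codim}\,X_{sing}\geq2$, and the description of tangent cones) in the style of Donaldson--Sun: $L^2$-holomorphic sections of $\linebundle_{l_j}^{m}$ produced by Hörmander's estimate, once passed to the limit, give a holomorphic embedding of a neighbourhood of a given $x\in X$ into projective space, and pulling back $\mathcal{O}(1)$ realises a suitable power $(\linebundle_\infty^{D},h_\infty^{D})$ as a genuine Hermitian holomorphic line bundle near $x$, the exponent $D$ absorbing the finite local Picard group and the Fubini--Study metric providing a continuous extension of $h_\infty^{D}$.

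\textbf{Lower semicontinuity of $\rho_m$.} Next, fix $x\in X_{reg}$, a sequence $x_j\to x$ under the identifications above, and $\epsilon>0$, and pick $s\in\hl(X,\linebundle_\infty^m)$ with $\|s\|_{L^2}=1$ and $\|s(x)\|^2>\rho_{X,\mu,\linebundle_\infty,h_\infty,m}(x)-\epsilon$. Since $s\in L^2$, choose a compact $K_0\subset X_{reg}$ with $x$ in its interior and $\int_{X\setminus K_0}\|s\|^2<\epsilon$, and a cutoff $\chi\equiv1$ near $x$ supported in a slightly larger compact subset of $X_{reg}$. Transplanting $\chi s$ to $M_{l_j}$ gives an approximately holomorphic section of $\linebundle_{l_j}^m$ with $\|\partialbar(\chi s)\|_{L^2}\leq C\sqrt{\epsilon}+o_j(1)$, using $J_{l_j}\to J_X$. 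Because $\Ric(\omega_{l_j})\geq-\Lambda\omega_{l_j}$, the (twisted) Bochner--Kodaira curvature term for $\linebundle_{l_j}^m$-valued $(0,1)$-forms is bounded below by $(2\pi m-\Lambda)\omega_{l_j}$, which is positive precisely because $m>\frac{\Lambda}{2\pi}$; as $\omega_{l_j}$ is complete, Hörmander's estimate produces $u_j$ with $\partialbar u_j=\partialbar(\chi s)$ and $\|u_j\|_{L^2}^2\leq(2\pi m-\Lambda)^{-1}\|\partialbar(\chi s)\|_{L^2}^2$. Then $s_j:=\chi s-u_j$ is a holomorphic section with $\|s_j\|_{L^2}\to1$ (as $j\to\infty$, then $\epsilon\to0$), and since $u_j$ is holomorphic where $\chi\equiv1$, interior estimates give $\|u_j(x_j)\|^2\leq C\|u_j\|_{L^2}^2\to0$; hence $\liminf_j\rho_{M_{l_j},\mu,\linebundle_{l_j},h_{l_j},m}(x_j)\geq\|s(x)\|^2\geq\rho_{X,\mu,\linebundle_\infty,h_\infty,m}(x)-\epsilon$, and $\epsilon$ is arbitrary.

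\textbf{Upper semicontinuity and conclusion.} Conversely, take extremal unit sections $s_j\in H^0(M_{l_j},\linebundle_{l_j}^m)$ with $\|s_j(x_j)\|^2\geq\rho_{M_{l_j},\mu,\linebundle_{l_j},h_{l_j},m}(x_j)-\epsilon$. The Bochner formula together with $\Ric(\omega_{l_j})\geq-\Lambda\omega_{l_j}$, the non-collapsing $\Vol(B_1(p_{l_j}))\geq v$, and the mean value inequality bound $\sup_{K'}\|s_j\|^2$ on compact $K'\subset\subset K\subset\subset X_{reg}$ uniformly in $j$; elliptic estimates for $\partialbar$ then bound $\{s_j\}$ in $C^\infty_{loc}(X_{reg})$, so a subsequence converges to a holomorphic $s_\infty\in H^0(X_{reg},\linebundle_\infty^m)$, with $\|s_\infty\|_{L^2}\leq1$ by Fatou's lemma, hence $s_\infty\in\hl(X,\linebundle_\infty^m)$. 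This gives $\rho_{X,\mu,\linebundle_\infty,h_\infty,m}(x)\geq\|s_\infty(x)\|^2=\lim_j\|s_j(x_j)\|^2\geq\limsup_j\rho_{M_{l_j},\mu,\linebundle_{l_j},h_{l_j},m}(x_j)-\epsilon$. Combining the two bounds yields pointwise convergence on $X_{reg}$, and the uniform gradient (Lipschitz) estimate for Bergman kernels on compact subsets of $X_{reg}$ upgrades it to locally uniform convergence. Here the Cheeger--Colding volume convergence theorem is also used, to identify the limit of the volume measures $\omega_{l_j}^n/n!$ with the $2n$-dimensional Hausdorff measure $\mu$ on $X_{reg}$, so that the $L^2$-norms match in the limit.

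\textbf{Main difficulty.} The analytic core --- the two one-sided estimates --- should be a relatively soft consequence of Hörmander's $L^2$ theory (the role of the hypothesis $m>\frac{\Lambda}{2\pi}$ being exactly to make the curvature term positive) together with interior elliptic regularity on the smoothly converging regular set. I expect the genuine obstacle to be the first part: showing that some power $(\linebundle_\infty^{D},h_\infty^{D})$ extends as a Hermitian holomorphic line bundle across the singularities, which requires the fine structure of the limit space, control of the local Picard group, and the continuity of the limiting metric near $X_{sing}$.
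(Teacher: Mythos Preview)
Your overall strategy---construct the limit line bundle, then prove upper and lower semicontinuity of the Bergman kernel separately---matches the paper's (Proposition~\ref{constructionlinebundleprop}, Lemma~\ref{lmmuppercontinuitybergman}, Lemma~\ref{lmmlowercontinuitybergman}), and your upper-semicontinuity step is essentially identical to Lemma~\ref{lmmuppercontinuitybergman}. For the extension of $(\linebundle_\infty^D,h_\infty^D)$ across singular points you propose a Donaldson--Sun style projective embedding; the paper takes a more direct route, applying Liu--Sz\'ekelyhidi's local partial $C^0$-estimate (Proposition~\ref{localpartialc0estimate}) to produce, on a ball around any $x_l\to x$, a single section of $\linebundle_l^D$ with norm pinched in $[1/2,2]$, extracting a $D$-th root on the simply connected regular part, and passing the resulting bounded transition functions to the limit. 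Both should work, but the paper's version avoids any global embedding.

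There is, however, a real gap in your lower-semicontinuity step. You claim $\|\partialbar(\chi s)\|_{L^2}\le C\sqrt\epsilon+o_j(1)$, implicitly treating $|\nabla\chi|$ as bounded by a fixed constant while $\mathrm{supp}(\nabla\chi)\subset X\setminus K_0$. But as $\epsilon\to 0$ you must let $K_0$ exhaust $X_{reg}$, so the cutoff region is squeezed against $X_{sing}$ and $\sup|\nabla\chi|\to\infty$; merely knowing $\int_{X\setminus K_0}\|s\|^2<\epsilon$ does not beat this blow-up, and your constant $C$ secretly depends on $\epsilon$. The paper resolves this differently: it first records a \emph{uniform} pointwise bound $\|s_\infty\|\le C$ (Moser iteration, Lemma~\ref{sobolevlmm}, together with the extension of $s_\infty^D$ across $X_{sing}$ established at the start of Lemma~\ref{lmmlowercontinuitybergman}), and then bounds $\|\nabla\chi_{\epsilon,l}\|_{L^2}$ itself, using the Cheeger--Jiang--Naber Minkowski estimate (Theorem~\ref{jcwsjan1thm17coro}) and the fact that any point whose tangent cone splits off $\mathbb{R}^{2n-2}$ is already regular. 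This gives $\|\nabla\chi_{\epsilon,l}\|_{L^2}\le C\epsilon$ with $C$ independent of $\epsilon$, so the H\"ormander correction $u_l$ is genuinely small. Your argument needs either this quantitative codimension input or some other device to decouple the cutoff gradient from the $L^2$-tail of $s$; as written it does not close. (A smaller omission: the paper proves convergence at every $x\in X$, including singular points, via the $D$-th power extension; you restrict to $x\in X_{reg}$.)
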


The proof of the convergence of Bergman kernels in Theorem \ref{thmcontinuousbergmankernel} is to construct a sequence of holomorphic sections of $\linebundle_{l_j} $ to approximate a holomorphic section of $\linebundle_\infty $. In Theorem \ref{thmcontinuousbergmankernel}, the condition $2\pi m> \Lambda $ seems like a technique condition, but it is necessary even when $\left( X,\linebundle_\infty \right)$ is a compact polarized K\"ahler manifold. Example \ref{genus2example} is a counterexample when $2\pi m=\Lambda $. 

The next theorem shows that the limit Hermitian metric $h_\infty $ on the limit line bundle $\linebundle_\infty $ can be uniquely determined up to multiplication by a pluriharmonic function. See Corollary \ref{corouniquehermitianmetric}.

\begin{thm}
\label{thmuniquenesskahlerpotential}
Let $\left( X ,d ,p \right) $ be a pointed Gromov-Hausdorff limit as in Theorem \ref{thmcontinuousbergmankernel}. Then the K\"ahler potentials on $M_l $ converge to a unique K\"ahler space structure on $X $, up to equivalence, and the K\"ahler forms $\omega_l $ converge to a unique closed positive current $\omega_X $ on $X$.
\end{thm}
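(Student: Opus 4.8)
The plan is to exploit the relation $\Ric(h_l) = 2\pi\omega_l$, which says that on $M_l$ the K\"ahler form is recovered from the Hermitian metric on the polarizing line bundle via $\omega_l = -\frac{i}{2\pi}\partial\bar\partial\log h_l$ in any local holomorphic frame. Since Theorem \ref{thmcontinuousbergmankernel} already produces (along a subsequence) a limit Hermitian holomorphic line bundle $(\linebundle_\infty, h_\infty)$ on $X_{reg}$, with $\linebundle_\infty^D$ extending across each singular point, the natural candidate is $\omega_X := -\frac{i}{2\pi}\partial\bar\partial\log h_\infty$ on $X_{reg}$. First I would show this is a well-defined closed positive current on $X_{reg}$: $h_\infty$ is a continuous metric obtained as a local uniform limit of the plurisubharmonic-type potentials $-\log h_l$, and on the regular part the convergence $M_l \to X$ can be taken in $C^\infty_{loc}$ away from the singular set (this is standard in the Cheeger-Colding/Liu-Sz\'ekelyhidi setting, using the $\epsilon$-regularity and the fact that $\omega_l$ has bounded geometry on good balls), so the local potentials converge in $C^\infty_{loc}$ on $X_{reg}$ and $\omega_X$ is a smooth K\"ahler metric there; more generally continuity of $h_\infty$ plus positivity of curvature in the current sense gives that $-\log h_\infty$ is locally psh, hence $\omega_X$ extends as a closed positive current across $X_{sing}$ by the standard extension of positive closed currents over analytic subsets of codimension $\ge 1$ (Skoda-El Mir).

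Next I would address uniqueness. Suppose a different subsequence yields $(\linebundle_\infty', h_\infty')$ and $\omega_X'$. By Theorem \ref{thmcontinuousbergmankernel} both are limits of $(\linebundle_l, h_l)$, and the underlying holomorphic line bundles must agree as holomorphic line bundles on $X_{reg}$ after passing to the common refinement of the two subsequences (the complex-analytic limit is unique), so $\linebundle_\infty' = \linebundle_\infty \otimes F$ for a holomorphic line bundle $F$ on $X_{reg}$ carrying a flat-looking identification; more precisely the two Hermitian metrics, pulled back to the same bundle, differ by a positive function $e^{\varphi}$, and since both have curvature equal to the limit current of $2\pi\omega_l$, the function $\varphi$ satisfies $\partial\bar\partial\varphi = 0$ on $X_{reg}$, i.e. $\varphi$ is pluriharmonic. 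This is the content of Corollary \ref{corouniquehermitianmetric}, and it forces $\omega_X' = \omega_X$ as currents on $X_{reg}$, hence (by the Skoda-El Mir extension, which is unique) on all of $X$. The "K\"ahler space structure up to equivalence" statement is then the assertion that the local K\"ahler potentials $-\frac{1}{2\pi}\log h_\infty$ glue to give $X$ the structure of a K\"ahler space in the sense of Grauert (a system of local continuous psh functions whose $i\partial\bar\partial$ agree on overlaps), well-defined up to adding local pluriharmonic functions — exactly the ambiguity identified above.

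The main obstacle I anticipate is controlling the behaviour near $X_{sing}$. Two issues arise: (i) one must know that no mass of $\omega_l$ concentrates on (a neighbourhood of) the singular set in the limit, so that $\omega_X$ genuinely has no singular part supported on $X_{sing}$ beyond what the closed-positive-current extension provides; this should follow from the volume convergence $\Vol(B_r(p_l)) \to \mathcal{H}^{2n}(B_r(p))$ together with the fact that $X_{sing}$ has real codimension $\ge 4$ (Cheeger-Colding), making it $\omega_X^n$-null, combined with the $L^1_{loc}$ convergence of $\omega_l \to \omega_X$. (ii) One must check that the limit Hermitian metric $h_\infty$ is genuinely continuous up to $X_{sing}$ after passing to the power $D$, which is where Theorem \ref{thmcontinuousbergmankernel}'s extension statement is used; the potential might a priori tend to $-\infty$ along $X_{sing}$, but the extension of $(\linebundle_\infty^D, h_\infty^D)$ to a neighbourhood of each $x \in X_{sing}$ as a Hermitian holomorphic line bundle rules this out and simultaneously gives the local psh potential needed for the K\"ahler space structure. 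Once these local pictures near singular points are in hand, patching them with the smooth picture on $X_{reg}$ and invoking uniqueness of the Skoda-El Mir extension completes the argument.
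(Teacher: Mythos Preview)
Your argument is circular at the crucial step. You write that the two limit Hermitian metrics ``both have curvature equal to the limit current of $2\pi\omega_l$, [so] the function $\varphi$ satisfies $\partial\bar\partial\varphi = 0$'', and you cite Corollary~\ref{corouniquehermitianmetric}. But the existence of a \emph{unique} limit current of $\omega_l$ is exactly the content of Theorem~\ref{thmuniquenesskahlerpotential}, and in the paper Corollary~\ref{corouniquehermitianmetric} is deduced \emph{from} that theorem, not the other way around. Concretely: if the two K\"ahler space structures come from different subsequences $M_{l_a}$ and $M_{l_b}$, then the approximating potentials $\psi_{l_a}$ and $\widehat\psi_{l_b}$ live on \emph{different} manifolds, so you cannot subtract them and pass a pluriharmonic difference to the limit. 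There is no a~priori common current that both $\partial\bar\partial\psi$ and $\partial\bar\partial\widehat\psi$ equal.

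A second gap is the claim that ``on the regular part the convergence $M_l\to X$ can be taken in $C^\infty_{loc}$''. Under only a Ricci \emph{lower} bound there is no $\epsilon$-regularity giving smooth convergence of the metrics; the limit Hermitian metric $h_\infty$ in Proposition~\ref{constructionlinebundleprop} is merely continuous, and the local potentials converge only uniformly. So you cannot read off $\omega_X$ as a smooth form on $X_{reg}$, nor conclude equality of curvatures by smooth convergence. The paper's actual mechanism (Proposition~\ref{proplocaluniquekahlerspace}) is quite different: at each scale $\delta$ and each point $y\notin\Sigma_\epsilon$, the local partial $C^0$-estimate forces \emph{both} limit potentials to be $\Psi(\epsilon)$-close to $\tfrac12 d_y^2$ after subtracting pluriharmonic functions; this furnishes a common reference independent of the subsequence. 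One then tests $\partial\bar\partial(\psi-\widehat\psi)$ against an $(n-1,n-1)$-form, integrates by parts onto a partition of unity built from these potentials, and uses the Cheeger--Jiang--Naber Minkowski estimate on $\Sigma_{\epsilon'}$ to kill the contribution near the bad set. That quantitative comparison to $\tfrac12 d_y^2$ is the missing idea in your proposal.
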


\begin{rmk}
In general, it is not clear whether the Gromov-Hausdorff limit $X$ is a complex space without polarization. So through the argument in Theorem \ref{thmuniquenesskahlerpotential}, we can only get the K\"ahler space structure on $X\sq \Sigma_\epsilon $ for some $\epsilon $, where $\Sigma_\epsilon $ is the set of points $y\in X$ satisfying that $\lim_{t\to 0^+ } t^{-2n} \Vol \left( B_t (y) \right) \leq \V_{2n } -\epsilon $, $\forall \epsilon >0$, and $\V_{2n } $ is the volume of the unit ball in $\mathbb{C}^n $.
\end{rmk}

Theorem \ref{thmuniquenesskahlerpotential} shows that there exists a unique positive current $\omega_X $ on $X$ that is compatible with the metric structure and complex structure of $X$ in the above way. If $\omega_X$ is smooth, then it is a K\"ahler form and $X$ becomes a K\"ahler manifold. But $\omega_X $ is only a closed positive current in general. By the convergence of Hausdorff measures \cite[Theorem 5.9]{chco3} and currents \cite[Corollary II-3.6]{dm1}, one can see that $\frac{\omega^n_X}{n!}$ can be represented by the $2n$-dimensional Hausdorff measure of $(X,d)$. By regularizing the Hermitian metric, we can establish a version of $L^2$-estimate for limit line bundles. See Proposition \ref{propl2estimateghlimit}. Then we can get the convergence of Fubini-Study currents on the limit space. Let us state our setting now.

Let $(X,\omega_X ,p )$ be the K\"ahler normal space as in Theorem \ref{thmuniquenesskahlerpotential}, and $\linebundle'_\infty $ be a line bundle on $X$ with a continuous Hermitian metric $h'_\infty $. Then we can define the curvature currents as $c_1 (\linebundle'_\infty ) = \frac{1}{2\pi} \Ric (h'_\infty ) $. Next, we define the Fubini-Study currents on $X$ by 
$$ \gamma_m = \frac{\sqrt{-1}}{2\pi} \partial\partialbar \log \left( \rho_{\linebundle'_\infty ,m } \right) +mc_1 (\linebundle'_\infty ) .$$
Recall that $\gamma_m \to c_1 (\linebundle'_\infty )  $ in the $C^\infty $-topology when $(X , \omega_X )$ is a smooth K\"ahler manifold and $c_1 (\linebundle'_\infty ) =\omega_X $ \cite{tg1,wdr1}. In this case, $ \gamma_m $ is called the Bergman metric on $X$. In the weak sense, Coman-Ma-Marinescu \cite[Theorem 1.1]{comamari1} and Coman-Marinescu \cite[Theorem 5.1]{comari1} considered the convergence of Fubini-Study currents. But they require the measure of the base space to be given by a smooth K\"ahler form.

Now we can state our result about the convergence of Fubini-Study currents. This is an analogy of the convergence result of Fubini-Study currents \cite{comamari1, comari1}. Note that the measure $\mu $ here is not generated by smooth K\"ahler form in general.

\begin{prop}
\label{propfscurrentdistribution}
Suppose that $\linebundle'_\infty $ is the limit line bundle of a sequence of line bundles on $\left( M_l ,\omega_l \right) $, and $\Ric (h'_\infty ) \geq \epsilon \omega_X $ for some constant $\epsilon >0$. Then we have $ m^{-1} \log \left( \rho_{\linebundle'_\infty ,m } \right) \to 0 $ in $L^\infty_{loc} (X) $.

Moreover, we have $\frac{1}{m^k} \gamma^k_m \to c_1 (\linebundle'_\infty )^k $ in the weak sense of currents on $X$, $k=1,\cdots ,n$.
\end{prop}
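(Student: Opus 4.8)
The plan is to deduce the statement from the $L^2$-estimate for the limit line bundle (Proposition~\ref{propl2estimateghlimit}) together with the convergence results already established, following the scheme of Coman--Ma--Marinescu but replacing their smooth-K\"ahler-form hypothesis by the current $\omega_X$ and the Hausdorff-measure structure. The first step is the upper bound $\limsup_m m^{-1}\log\rho_{\linebundle'_\infty,m}\le 0$ locally uniformly: this is a submean-value / dimension estimate. Fix a compact $K\subset X_{reg}$; since $\linebundle'_\infty$ is a limit line bundle and $\Ric(h'_\infty)\ge\epsilon\omega_X>0$, a Tian-type peak-section or Demailly-type local estimate on the regular part gives $\rho_{\linebundle'_\infty,m}(x)\le C m^{n+1}\int_{B_r(x)}\|s\|^2_{h'^m_\infty}\,d\mu\le C' m^{n+1}$ for the $L^2$-normalized extremal section, using that $\mu$ is Ahlfors-regular of dimension $2n$ near $K$ by the volume convergence \cite{chco3}; taking $\log$, dividing by $m$, and letting $m\to\infty$ kills the polynomial factor. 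For the matching lower bound $\liminf_m m^{-1}\log\rho_{\linebundle'_\infty,m}\ge 0$ I would use the $L^2$-estimate: given $x\in X_{reg}$, solve $\partialbar$ to correct a cutoff of the local weight $e^{-m\varphi}$ (with $\varphi$ a local potential for $c_1(\linebundle'_\infty)$, which is strictly plurisubharmonic by the curvature hypothesis), producing a global section $s_m\in H^0_{L^2}(X,\linebundle'^m_\infty)$ with $\|s_m\|_{L^2}\le C$ and $\|s_m(x)\|^2_{h'^m_\infty}\ge c\, m^n$; hence $\rho_{\linebundle'_\infty,m}(x)\ge c m^n/C^2$, giving the lower bound, and a standard Dini/compactness argument upgrades pointwise convergence to $L^\infty_{loc}$. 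The extension of $(\linebundle_\infty^D,h_\infty^D)$ across $X_{sing}$ from Theorem~\ref{thmcontinuousbergmankernel} is what lets me run the $L^2$-estimate on a genuine neighborhood of each point rather than only on $X_{reg}$.

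For the second assertion, write $u_m:=m^{-1}\log\rho_{\linebundle'_\infty,m}$, so that $\frac{1}{m}\gamma_m=\frac{\sqrt{-1}}{2\pi}\partial\partialbar u_m+c_1(\linebundle'_\infty)$. Locally on $X_{reg}$, $\frac1m\gamma_m=\frac{\sqrt{-1}}{2\pi}\partial\partialbar\big(\frac1{2m}\log\rho_{\linebundle'_\infty,m}+\varphi\big)$ is a closed positive $(1,1)$-current, being the curvature of the singular metric $e^{-2m\varphi}\rho_{\linebundle'_\infty,m}$ raised to $1/m$; the function $\psi_m:=\frac1{2m}\log\rho_{\linebundle'_\infty,m}+\varphi$ is plurisubharmonic and, by the first part, $\psi_m\to\varphi$ in $L^\infty_{loc}$, in particular in $L^1_{loc}$. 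By the continuity of $\partial\partialbar$ on $L^1_{loc}$-convergent sequences of psh functions (for currents of bidegree $(1,1)$), $\frac1m\gamma_m\to\frac{\sqrt{-1}}{\pi}\partial\partialbar\varphi=c_1(\linebundle'_\infty)$ weakly on $X_{reg}$. For the wedge powers, the Bedford--Taylor--Demailly convergence theorem applies: a locally uniformly bounded, $L^1_{loc}$-convergent sequence of psh functions has $(\partial\partialbar\psi_m)^k\to(\partial\partialbar\varphi)^k$ weakly, so $\frac1{m^k}\gamma_m^k\to c_1(\linebundle'_\infty)^k$ weakly on $X_{reg}$ for $1\le k\le n$.

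The remaining point is that these weak limits on $X_{reg}$ are the correct ones as currents on all of $X$, i.e.\ the masses do not escape into $X_{sing}$. Here I would invoke that $X_{sing}$ has Hausdorff codimension $\ge 4$ (by the structure theory behind \cite{lggs1} and the fact that the tangent cones are $\mathbb{C}^n/\Gamma$-type), so it has zero $(2n-2)$-Hausdorff measure and hence is negligible for closed positive currents of bidimension $(n-1,n-1)$; the currents $\frac1m\gamma_m$ have locally bounded mass (their cohomology class is $c_1(\linebundle'_\infty)$ on the smooth part and the potentials are uniformly bounded), so by Skoda--El Mir type extension and the Bishop--Shiffman removable-singularity argument the weak limit extends trivially across $X_{sing}$ and equals $c_1(\linebundle'_\infty)$ there. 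Iterating for the wedge powers uses that $c_1(\linebundle'_\infty)^k$ is well-defined on $X$ as a closed positive current with no mass on $X_{sing}$, which follows from the continuity of $h'_\infty$ and the codimension bound.

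The main obstacle I expect is the lower bound $\liminf m^{-1}\log\rho_{\linebundle'_\infty,m}\ge 0$ uniformly near $X_{sing}$: the $L^2$-estimate of Proposition~\ref{propl2estimateghlimit} must be applied on neighborhoods of singular points where the metric $\omega_X$ is only a current and the measure is only Ahlfors-regular, so the H\"ormander-type constant and the weight-subtraction trick have to be controlled using the extension $(\linebundle_\infty^D,h_\infty^D)$ and an approximation of $h'_\infty$ by metrics with bounded geometry on the resolution; making the section $s_m$ genuinely $L^2$ on all of $X$ (not just locally) while keeping $\|s_m(x)\|^2\gtrsim m^n$ is the delicate step, and it is exactly where the non-collapsing bound $\Vol(B_1(p_l))\ge v$ and the positivity $\Ric(h'_\infty)\ge\epsilon\omega_X$ enter quantitatively.
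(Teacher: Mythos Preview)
Your outline is essentially correct and, like the paper, reduces the current convergence to the $L^\infty_{loc}$ convergence of $m^{-1}\log\rho_{\linebundle'_\infty,m}$ via the Bedford--Taylor--Demailly continuity of Monge--Amp\`ere on locally bounded psh functions. The route to that $L^\infty_{loc}$ convergence, however, differs from the paper's.

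The paper does not build a peak section on $X$ directly. Instead, for each $x\in X$ it compares the global Bergman kernel $\rho_{X,\linebundle'_\infty,m}$ to the \emph{local} Bergman kernel $\rho_{B_{10r}(x),\linebundle'_\infty,m}$: the inequality $\rho_X\le\rho_{B_{10r}(x)}$ is trivial, while $\rho_X\ge(1-\Psi(m^{-1}))\rho_{B_{10r}(x)}$ follows from Proposition~\ref{propl2estimateghlimit} applied with a logarithmic-pole weight $\varphi_y$. It then trivializes $\linebundle'_\infty$ on $B_{10r}(x)$ by a frame $e_x$ with $|\,\|e_x\|-1|\le\Psi(r|x)$ and compares to the Bergman kernel of the \emph{trivial} bundle with K\"ahler potential $\psi_x$, losing only a factor $e^{\pm m\Psi(r|x)}$. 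Finally, the bound $\rho_{B_{10r}(x),\mathbb{C},\psi_x,m}\ge bm^n$ is obtained not on $X$ but by transferring to the approximating manifolds $M_l$ via Lemma~\ref{lmmlowercontinuitybergman} and invoking the partial $C^0$-estimate there. This works uniformly at every $x\in X$, singular points included, because $\linebundle'_\infty$ is a genuine line bundle on $X$ with continuous metric; the paper therefore obtains convergence in $L^\infty_{loc}(X)$ directly and applies Bedford--Taylor on all of $X$ at once.

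By contrast, your direct peak-section construction on $X$ should also succeed, but the assertion ``$\|s_m(x)\|^2\ge cm^n$'' hides exactly the small-scale volume control that the paper sidesteps by going back to $M_l$. Your Skoda--El~Mir extension across $X_{sing}$ and the codimension-$4$ mass argument are then unnecessary overhead: once $m^{-1}\log\rho\to 0$ holds in $L^\infty_{loc}(X)$ rather than merely $L^\infty_{loc}(X_{reg})$, nothing escapes into the singular set. Also, ``Dini/compactness'' is not quite the right mechanism for the uniform upgrade (there is no monotonicity); the paper simply notes that for each $x$ and $\epsilon$ the bound $|m^{-1}\log\rho|\le\epsilon$ holds on some $B_{r_{x,\epsilon}}(x)$ for all $m\ge m_{x,\epsilon}$, and covers compacta finitely.
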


\begin{rmk}
Under the additional conditions of orthogonal bisectional curvature lower bound and non-collapsing everywhere, the line bundle $\linebundle'_\infty $ need not be a limit line bundle. See Corollary \ref{corol2thmlimit}.
\end{rmk}

As an application of Theorem \ref{thmcontinuousbergmankernel}, we can obtain the following $L^p$ asymptotic expansion of the Bergman kernel. This is a first-order uniform Tian–Yau–Zelditch expansion of the $L^p$ version. Note that the uniform $C^0$ asymptotic expansion does not hold in this case. 
\begin{prop}
\label{lpestprop}
Let $p\geq 1$, $\Lambda ,v >0$ and $n\in\mathbb{N}$ be given. Suppose $(M,\omega ,\linebundle ,h)$ is a complete polarized K\"ahler manifold and $x\in M$. If $\Ric(\omega ) \geq -\Lambda \omega $ and $\Vol\left( B_1 (x) \right) \geq v $, then we have
$$ \left\Vert m^{-n} \rho_{m} - 1 \right\Vert_{L^p \left( B_1 (x) \right) } \leq \Psi \left( m^{-1} \big| p,\Lambda ,v \right) ,$$
where $\left\Vert f \right\Vert_{L^p \left( \Omega \right) } = \left( \frac{1}{\Vol(\Omega )}\int_{\Omega} |f|^p \right)^\frac{1}{p} $.
\end{prop}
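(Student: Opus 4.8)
The plan is to deduce Proposition \ref{lpestprop} from Theorem \ref{thmcontinuousbergmankernel} by a standard compactness-and-contradiction argument, reducing the uniform $L^p$ bound on all polarized manifolds satisfying the hypotheses to the pointwise convergence of Bergman kernels on a Gromov-Hausdorff limit. Suppose the conclusion fails. Then there exist $p\geq 1$, $\Lambda ,v>0$, a number $\delta >0$, a sequence $m_l\to\infty$, and pointed complete polarized K\"ahler manifolds $(M_l,\omega_l,\linebundle_l,h_l,x_l)$ with $\Ric(\omega_l)\geq -\Lambda\omega_l$ and $\Vol(B_1(x_l))\geq v$, such that
$$ \left\Vert m_l^{-n}\rho_{M_l,\mu_l,\linebundle_l,h_l,m_l} - 1 \right\Vert_{L^p(B_1(x_l))} \geq \delta \qquad \text{for all } l. $$
By Gromov's compactness theorem, after passing to a subsequence, $(M_l,\omega_l,x_l)$ converges in the pointed Gromov-Hausdorff sense to some $(X,d,p)$, and by \cite{lggs1} together with the hypotheses of Theorem \ref{thmcontinuousbergmankernel} we may assume the complex structures converge to a normal complex space structure on $X$, and the Hermitian line bundles $(\linebundle_l,h_l)$ converge to $(\linebundle_\infty,h_\infty)$ on $X_{reg}$. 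Here there is a subtlety: Theorem \ref{thmcontinuousbergmankernel} as stated takes a \emph{fixed} degree $m$, whereas we have a growing sequence $m_l$. To handle this I would instead normalize the line bundles: replace $\linebundle_l$ by $\linebundle_l^{m_l}$, equivalently rescale the metrics $\omega_l$ by $m_l$, so that $\rho_{M_l,\mu_l,\linebundle_l,h_l,m_l}$ becomes a degree-one Bergman kernel for the rescaled data, and the Ricci lower bound improves to $-\Lambda/m_l \to 0$. Under this rescaling the balls $B_1(x_l)$ become balls of radius $\sqrt{m_l}\to\infty$, so one works on larger and larger regions; the rescaled limit is then flat $\mathbb{C}^n$ (the tangent cone / blow-up limit), on which the degree-one Bergman kernel of the trivial polarization is identically $1$. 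The real content of Theorem \ref{thmcontinuousbergmankernel} used is the convergence $\rho_{M_{l},\mu,\linebundle_{l},h_{l},m} \to \rho_{X,\mu,\linebundle_\infty,h_\infty,m}$, applied in the rescaled picture.

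The key steps, in order, are: (1) set up the contradiction hypothesis and extract a Gromov-Hausdorff convergent subsequence with convergence of complex structures and line bundles; (2) rescale $\omega_l \mapsto m_l\omega_l$ and $\linebundle_l\mapsto\linebundle_l^{m_l}$ so the Bergman kernel in question is a degree-one kernel $\rho_{m=1}$ for data whose Ricci curvature lower bound tends to $0$ and whose non-collapsed basepoint has unit-ball volume still bounded below; (3) identify the limit: on $B_1(x_l)$ rescaled, the limit space is an open subset of $\mathbb{C}^n$ with flat metric, and the limit polarization $(\linebundle_\infty,h_\infty)$ has $h_\infty$ flat so $\rho_{X,\mu,\linebundle_\infty,h_\infty,1}\equiv 1$ there --- this is the on-diagonal Bergman kernel of $(\mathbb{C}^n,\omega_{\mathrm{std}})$; (4) invoke Theorem \ref{thmcontinuousbergmankernel} to conclude $m_l^{-n}\rho_{m_l}=\rho_{M_l,\mu,\linebundle_l^{m_l},h_l^{m_l},1}\to 1$ locally uniformly on $X_{reg}$, in particular in measure on the rescaled $B_1(x_l)$; (5) upgrade this convergence in measure to $L^p$ convergence using a uniform upper bound $\rho_{m_l}\leq Cm_l^n$ (which holds by Tian's argument / a mean-value inequality from the Ricci lower bound and non-collapsing, and is implicit in the compactness of Theorem \ref{thmcontinuousbergmankernel}) together with the fact that $X_{sing}$ has measure zero; (6) derive the contradiction with the lower bound $\delta$.

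The main obstacle I anticipate is step (5): converting locally uniform (or in-measure) convergence of $m_l^{-n}\rho_{m_l}$ on $X_{reg}$ into genuine $L^p$ convergence on the \emph{fixed-radius} ball $B_1(x_l)$. Two issues combine here. First, the singular set $X_{sing}$ could in principle carry concentrating mass of $\rho_{m_l}$; this is ruled out by a uniform $L^\infty$ bound $\rho_m \leq C(n,\Lambda,v)\,m^n$ valid on all of $M$ (not merely away from the "singular" locus), which follows from a sub-mean-value inequality for $|s|^2_{h^m}$ applied on balls of radius $\sim m^{-1/2}$ where the rescaled Ricci lower bound and the volume non-collapsing give a uniform Sobolev/mean-value constant. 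Given such an $L^\infty$ bound, the integrand $|m_l^{-n}\rho_{m_l}-1|^p$ is uniformly bounded, so dominated convergence (with the measure-zero singular set and the convergence of Hausdorff measures \cite[Theorem 5.9]{chco3}) promotes in-measure convergence to $L^1$, hence $L^p$, convergence. Second, one must be careful that the balls $B_1(x_l)\subset M_l$ converge to $B_1(p)\subset X$ in a way compatible with the measures; this is exactly the content of the volume convergence of Cheeger--Colding, already cited. Once these are in place, the contradiction is immediate, and the only remaining routine point is to check that the $\Psi$-notation conclusion (quantitative dependence only on $p,\Lambda,v,n$) is precisely the negation of the contradiction hypothesis, which it is.
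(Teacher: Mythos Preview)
Your proposal has a genuine gap, stemming from a scale mismatch between the two limiting procedures you invoke.

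You first take the unrescaled Gromov--Hausdorff limit $(M_l,\omega_l,x_l)\to(X,d,p)$, for which $B_1(x_l)\to B_1(p)$ and $X_{sing}$ has measure zero; but Theorem~\ref{thmcontinuousbergmankernel} applies only for a \emph{fixed} $m$, so this limit by itself says nothing about $m_l^{-n}\rho_{m_l}$ with $m_l\to\infty$. You then rescale to $(M_l,m_l\omega_l,x_l)$ in order to apply Theorem~\ref{thmcontinuousbergmankernel} at $m=1$, and assert that the rescaled limit is flat $\mathbb{C}^n$. This assertion is unjustified: the pointed limit of $(M_l,m_l\omega_l,x_l)$ depends on the local geometry near $x_l$ at scale $m_l^{-1/2}$, and in general is only some non-collapsed space with $\Ric\geq 0$, possibly a singular cone such as $\mathbb{C}^n/\Gamma$ (on which the degree-one Bergman kernel is \emph{not} identically $1$, cf.\ Section~\ref{Calculationsonorbifolds}). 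Worse, even granting that the rescaled limit is $\mathbb{C}^n$, the original ball $B_1(x_l)$ becomes $B_{\sqrt{m_l}}(x_l)$ in the rescaled metric, an \emph{unbounded} region which pointed Gromov--Hausdorff convergence does not control. Your step~(5) then appeals to ``$X_{sing}$ has measure zero'', a qualitative statement about the \emph{unrescaled} limit $X$, to upgrade a convergence taking place at the \emph{rescaled} level; these are different objects, and the measure-zero statement does not give the quantitative volume control needed over $B_1(x_l)\subset M_l$.

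The paper's proof supplies precisely this missing ingredient. Rather than a single global contradiction, it decomposes $B_1(x)$ into a ``good'' set and a ``bad'' set at scale $r=m^{-1/2}$. On the good set (points outside the quantitative stratum $\mathcal{S}_{\epsilon,\epsilon^{-1}r}$, i.e.\ points where some ball of scale $\sim r$ is $\epsilon$-close to a Euclidean ball), Theorem~\ref{thmcontinuousbergmankernel} is applied \emph{pointwise} via a local contradiction argument to conclude $|m^{-n}\rho_m-1|\leq\Psi(\epsilon)$. The bad set is controlled by the Cheeger--Jiang--Naber Minkowski estimate (Theorem~\ref{jcwsjan1thm17coro}), which gives $\Vol(\mathcal{S}_{\epsilon,\epsilon^{-1}r}\cap B_1(x))\leq C(n,\epsilon,\Lambda,v)\,r^{2}=C m^{-1}$. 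Combined with the uniform bound $m^{-n}\rho_m\leq C(n,\Lambda,v)$ from Lemma~\ref{sobolevlmm}, this yields the $L^p$ estimate directly. The Cheeger--Jiang--Naber quantitative stratification is the essential tool your approach lacks; without it, there is no mechanism to bound the measure of points in $B_1(x_l)$ where the geometry at scale $m_l^{-1/2}$ fails to be almost-Euclidean.
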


In this paper, $\Psi \left( \epsilon_1 ,\cdots ,\epsilon_m \big| u_1 ,\cdots ,u_n \right)$ denotes a function of $\epsilon_1 ,\cdots ,\epsilon_m ,u_1 ,\cdots ,u_n $, such that for fixed $u_i$ we have $ \lim_{\epsilon_1 ,\cdots ,\epsilon_m \to 0} \Psi =0 $. 

For a fixed $(M,g)$, we see that the order of the asymptotic expansion (\ref{tyzasymptoticexpansion}) can be arbitrarily high. But in the uniform sense, even if we only consider the $L^1 $ norm, it is impossible to give a uniform second-order asymptotic expansion under the conditions above. See Example \ref{oscillation}.

Our argument is to divide the manifold $(M,\omega )$ into two parts, and then estimate the Bergman kernels on it respectively. On one part of $(M,\omega )$, the Bergman kernel $\rho_{\omega ,m}$ is sufficiently close to $m^n$, and the remaining part of $(M,\omega )$ is small enough.

An interesting and famous question about Bergman kernels is to ask whether there is a uniform positive lower bound for Bergman kernels on a certain class of polarized K\"ahler manifolds. Such results are often called Tian’s partial $C^0 $-estimates. Tian’s partial $C^0 $-estimate originated from Tian's work on finding K\"ahler-Einstein metrics on del Pezzo surfaces \cite{tg5}. It has been a powerful tool in solving problems in K\"ahler geometry, especially in the proof of the Yau-Tian-Donaldson conjecture \cite{tg6,tg3,chendonsun1}. There are many works on Tian’s partial $C^0$-estimate \cite{donsun1,jws1,lggs1,sze1,wfzxh1,kwz1}.

Our next result is a refinement of Tian’s partial $C^0$-estimate when the orthonormal bisectional curvature has a lower bound. Here we recall the meaning of orthonormal bisectional curvature. Given a constant $K\in\mathbb{R}$ and a K\"ahler manifold $(M,\omega )$, we say that the orthogonal bisectional curvature is greater than or equal to $K$ $( OB\geq K )$, if and only if 
$$ R\left( X,\bar{X} , Y, \bar{Y} \right) \geq K \left( \left\Vert X \right\Vert^2 \left\Vert Y \right\Vert^2 +\left| \left\langle X,\bar{Y} \right\rangle \right|^2 \right) , $$
for any $X,Y\in T^{1,0} M $ satisfying $\omega (X,\bar{Y} ) =0 $, where $R$ is the curvature operator.

\begin{prop}
\label{refinementofbkk}
Given $K\in \mathbb{R}$, $v >0$, $n\in\mathbb{N}$, there are constants $b, m_0 >0$ with the following property. Let $(M,\omega ,\linebundle ,h)$ be a polarized K\"ahler manifold with $\Ric (\omega ) \geq -\Lambda \omega $, $OB\geq -\lambda $ and $\Vol\left( B_1 (x) \right) \geq v $ for some $x\in M$. Then
$$  m^{-n} \rho_{\omega ,m} (x) \geq b , \;\; \forall m\geq m_0 .$$
\end{prop}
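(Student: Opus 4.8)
The plan is to prove a lower bound at a single non-collapsed point $x$ by combining the convergence theorem (Theorem~\ref{thmcontinuousbergmankernel}) with a contradiction/compactness argument and the special structure that the orthogonal bisectional curvature bound forces on the Gromov--Hausdorff limit. First I would reduce to a limiting statement: suppose the conclusion fails, so there exist $\Lambda_j \to$ (something), polarized K\"ahler manifolds $(M_j,\omega_j,\linebundle_j,h_j)$ with $\Ric(\omega_j)\geq -\Lambda_j\omega_j$, $OB\geq -\lambda_j$, $\Vol(B_1(x_j))\geq v$, and integers $m_j\to\infty$ with $m_j^{-n}\rho_{\omega_j,m_j}(x_j)\to 0$. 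Here I need to be a little careful: the statement as phrased gives $b,m_0$ depending on $K,v,n$ but not on $\Lambda$, so the $\Lambda_j$ must be allowed to be arbitrary; the $OB\geq -\lambda$ with $\lambda$ also apparently free. The natural move is to rescale. Replacing $\omega_j$ by $m_j\omega_j$ (and $h_j$ by $h_j^{m_j}$, so $\linebundle_j^{m_j}$ becomes the polarization) turns the statement about $\rho_{\omega_j,m_j}$ into a statement about $\rho_{m_j\omega_j,1}$, i.e.\ the Bergman kernel of the plurisubharmonic weight at level $1$; under this rescaling $\Ric(m_j\omega_j)\geq -\tfrac{\Lambda_j}{m_j}(m_j\omega_j)$ and $OB\geq -\tfrac{\lambda_j}{m_j}$, so after rescaling the curvature lower bounds tend to $0$, and the non-collapsing hypothesis $\Vol_{m_j\omega_j}(B_{\sqrt{m_j}}(x_j))\geq v\cdot m_j^n$ becomes non-collapsing on balls of radius $\sqrt{m_j}\to\infty$. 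Thus the rescaled spaces $(M_j, m_j\omega_j, x_j)$ subsequentially Gromov--Hausdorff converge to a pointed limit $(X,d,p)$ that, by the Cheeger--Colding theory together with Liu--Sz\'ekelyhidi, is a normal K\"ahler complex space, and the $OB\geq -\lambda_j/m_j\to 0$ bound passes to the limit so that $X$ has, in an appropriate synthetic sense, nonnegative orthogonal bisectional curvature; in particular the tangent cone at $p$ splits as $\mathbb{C}^k\times C(Z)$ and, by the curvature condition, I expect $X$ to be forced to be either $\mathbb{C}^n$ or a very restricted cone, in any case with $X_{reg}$ of full measure and $p\in X_{reg}$ (here the $OB$ lower bound is what rules out the collapse/singularity of the kernel).

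The second step is to produce, on the limit, a nonzero $L^2$ holomorphic section of $\linebundle_\infty$ that does not vanish at $p$, with controlled norm. For this I would invoke the $L^2$-estimate on the limit line bundle, Proposition~\ref{propl2estimateghlimit}: since $\Ric(h_\infty)=2\pi\omega_X$ and $\omega_X$ is a positive current representing the (rescaled-limit) polarization, and since near the non-collapsed point $p$ the space $X$ is regular with $\linebundle_\infty$ (or a fixed power $\linebundle_\infty^D$ after passing to it, by Theorem~\ref{thmcontinuousbergmankernel}) a genuine Hermitian holomorphic line bundle of positive curvature, one can solve a $\partialbar$-equation to correct a cutoff of a local peak section centered at $p$ into a global $L^2$ holomorphic section with $\|s(p)\|_{h_\infty}^2 / \|s\|_{L^2}^2 \geq c(n,v)>0$. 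This is exactly the mechanism behind all partial $C^0$-estimates; the role of nonnegative $OB$ is to give definite lower volume and injectivity-type control on a definite-size regular neighborhood of $p$ so that the local peak section exists and the cutoff error is controllable uniformly.

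The third step is to transfer this back: by Theorem~\ref{thmcontinuousbergmankernel} (applied to the rescaled sequence, with $m=1$, which is legitimate since after rescaling $\Ric\geq -\Lambda_j/m_j\cdot(\text{metric})$ and $1>\Lambda_j/(2\pi m_j)$ for $j$ large, so the hypothesis $2\pi m>\Lambda$ is satisfied), the Bergman kernels $\rho_{M_j, m_j\omega_j, \linebundle_j^{m_j}, h_j^{m_j}, 1}(x_j)$ converge to $\rho_{X,\mu,\linebundle_\infty,h_\infty,1}(p)\geq c(n,v)>0$. But $\rho_{M_j, m_j\omega_j, \linebundle_j^{m_j}, h_j^{m_j},1}(x_j)=m_j^{-n}\rho_{\omega_j,m_j}(x_j)$ up to the normalization constant coming from the change of measure $\tfrac{(m_j\omega_j)^n}{n!}=m_j^n\tfrac{\omega_j^n}{n!}$, which is precisely what makes the powers match. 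This contradicts $m_j^{-n}\rho_{\omega_j,m_j}(x_j)\to 0$, and the contradiction yields the asserted $b,m_0$.

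The main obstacle I anticipate is the second step — constructing the peak section on the \emph{singular} limit with its merely-current K\"ahler metric and then establishing the lower bound $\rho_{X,\mu,\linebundle_\infty,h_\infty,1}(p)>c(n,v)$ \emph{uniformly}, i.e.\ with a constant depending only on $n$ and $v$. Equivalently, one must show that the $OB\geq 0$ condition on the limit forces enough regularity and volume control at the non-collapsed point $p$ (a definite-size Euclidean-like neighborhood, or at worst a finite orbifold quotient thereof) to run the $\partialbar$-argument with uniform constants; ruling out subtle singular behavior of $X$ at $p$ and making the Hörmander estimate's constants depend only on $(n,v)$ rather than on the particular limit is where the work concentrates. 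A secondary technical point is justifying that the $OB\geq -\lambda_j$ bounds genuinely pass to a usable limiting condition after rescaling and Gromov--Hausdorff convergence — one may instead need to keep $\lambda$ fixed (not rescale it away) and argue that $OB\geq -\lambda/m_j$ suffices to apply whatever structural result on limits of manifolds with almost-nonnegative $OB$ is available; I would check whether the cleanest route is through the already-cited structure theory for such limits or through a direct peak-section construction on $M_j$ at scale $\sqrt{m_j}$ using the curvature bounds before taking any limit.
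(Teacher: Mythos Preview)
Your overall contradiction-plus-rescaling framework matches the paper exactly, and your third step (transferring back via Theorem~\ref{thmcontinuousbergmankernel}) is correct. The difference lies in how the paper handles precisely the obstacle you flag in step two. Rather than constructing a peak section on the first limit $(X,d,p)$ via Proposition~\ref{propl2estimateghlimit}, the paper performs a \emph{second} rescaling. Having established $\rho_{X,\mu,\linebundle_\infty,h_\infty,1}(p)=0$, it considers the further blow-down sequence $(X,k^{-1/2}d,p)$ together with the $k$-th root bundles $\linebundle_{\infty,k}$ (arising as limits of $(M_l,\lfloor j_l/k\rfloor\,\omega_l,p_l)$); a short argument using Lemma~\ref{sobolevlmm} shows $\rho_{\linebundle_{\infty,k},1}(p)=0$ for every $k$ as well. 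Letting $k\to\infty$, these converge to a tangent cone $(V,o)$, and Theorem~\ref{thmcontinuousbergmankernel} applied once more gives $\rho_{V,\mathbb{C},e^{-\pi r^2},1}(o)=0$.

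The payoff is that the paper proves, as Lemma~\ref{tangentconeOBCn}, that under the $OB$ lower bound every tangent cone of such a limit is \emph{biholomorphic to $\mathbb{C}^n$} with $\tfrac12 r^2$ as a K\"ahler potential. Consequently the limit line bundle on $V$ is trivial with Hermitian metric $e^{-\pi r^2}$, and the constant frame is already a nonzero $L^2$ holomorphic section (since $\Vol(B_r(o))=c\,r^{2n}$ on a cone, $e^{-\pi r^2}\in L^2(V,\mu_V)$). This gives $\rho_{V,\mathbb{C},e^{-\pi r^2},1}(o)\geq\|e^{-\pi r^2}\|_{L^2}^{-2}>0$, the desired contradiction. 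So the paper never has to build a peak section on a potentially singular or merely-current-K\"ahler space: the second rescaling plus Lemma~\ref{tangentconeOBCn} reduces everything to the model $(\mathbb{C}^n,e^{-\pi r^2})$ where the peak section is free. Your route through Proposition~\ref{propl2estimateghlimit} is not wrong in spirit, but the uniformity of constants you worry about is genuinely delicate there; the tangent-cone trick sidesteps it entirely. (Incidentally, the regularity of $X$ as a complex manifold that you hope for does hold, by the Lee--Tam result cited as \cite{leetam1}, but the paper uses this only in proving Lemma~\ref{tangentconeOBCn}, not for a direct $L^2$ construction on $X$.)
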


By using the theory of K\"ahler-Ricci flow, Jiang \cite{jws1} proved the result above when $M$ is Fano, $\linebundle =K_{M}^{-1} $, and the bisectional curvature is nonnegative. Note that $\Ric (\omega ) \geq 0 $ and $OB\geq 0$ when the bisectional curvature is nonnegative. Our method is to directly apply H\"ormander's $L^2$ method to construct a suitable section, so we don't need to assume $\linebundle =K_{M}^{-1}$.

In general, the partial $C^0$-estimate does not hold for all sufficiently large $m$ \cite{tg5, dailiuma1}, but holds for an increasing sequence $m_j \to\infty $ \cite{lggs1, kwz1}. It is speculated that Tian's partial $C^0$-estimate characterized the geometry of the polarized manifold $(M,\linebundle )$ in an effective way. For example, like Matsusaka's big theorem's refinement of Kodaira embedding theorem, the partial $C^0$-estimate characterizes the finitely generativeness of the ring $ R(X) =\oplus_{k\geq 0} H^0 \left( X,\linebundle^k \right)  $ \cite{chili1}.

Under the two-sided Ricci curvature bounds and diameter bound, Donaldson-Sun conjectures that Tian's partial $C^0$-estimate is related to the volume ratio in \cite{donsun1}. They also conjectured that the lower bound could be infinitely close to $1$. 

\begin{conj}[\cite{donsun1}, Conjecture 5.15]
\label{dsconj}
Given $n\in\mathbb{N}$, $c,d>0$ and $\eta \in (0,1) $, there is a number $m_0$ with the following property. Let $(M,\omega ,\linebundle ,h)$ be a polarized K\"ahler manifold with $\left| \Ric \right| \leq 1 $, $\diam (M,\omega) \leq d $. Suppose that $\Vol \left( B_r \right) \geq c\frac{\pi^n}{n!} r^{2n} $ holds for each metric $r$-ball in $M$, $\forall r\in (0,d) $. Then we have
$$ \eta \leq (mD)^{-n} \rho_{mD} \leq \frac{1}{c\eta} ,\;\; \forall m>m_0 ,$$
where $D=D(c)$ is the least integer such that all integers less than or equal to $c^{-1}$ divide $D$.
\end{conj}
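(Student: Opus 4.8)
The statement above is Donaldson--Sun's Conjecture, and the plan is to \emph{disprove} it: for suitable $n,c,d$ we will produce a sequence of smooth polarized K\"ahler manifolds satisfying every hypothesis for which the lower bound fails at arbitrarily large $m$ once $\eta$ is close to $1$. The mechanism is an explicit computation of the Bergman kernel near an orbifold singularity, transported to smooth manifolds by Theorem \ref{thmcontinuousbergmankernel}.

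Fix an integer $k\geq 3$ and a compact Ricci-flat K\"ahler orbifold $(X,\omega)$ with an $A_{k-1}$ singularity along a subvariety $\Sigma$ (a point on a K3 orbifold when $k\leq 20$, a curve on a Calabi--Yau threefold or higher for larger $k$), with $[\omega]$ an integral K\"ahler class, polarized by an orbifold Hermitian line bundle $(\linebundle,h)$ with $\Ric(h)=2\pi\omega$. By the theory of degenerating Calabi--Yau metrics, $(X,\omega,\linebundle,h)$ is the pointed Gromov--Hausdorff limit of a sequence of smooth polarized Calabi--Yau manifolds $(M_i,\omega_i,\linebundle_i,h_i)$ with $\Ric(\omega_i)=0$, $\Ric(h_i)=2\pi\omega_i$ and $\diam(M_i,\omega_i)\leq d$; moreover, by the local structure of the $A_{k-1}$ degeneration together with Bishop--Gromov, there is a uniform $c>0$, which we may take to be any number $\leq 1/k$, with $\Vol(B_r(y))\geq c\tfrac{\pi^n}{n!}r^{2n}$ for all $y\in M_i$, all $r\in(0,d)$, and all large $i$. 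Thus each $M_i$ (for $i$ large) satisfies all the hypotheses of the Conjecture; and since $c\leq 1/k$ we have $k\mid D$, so $\linebundle^{D}$ (resp.\ $\linebundle_\infty^{D}$) descends to a genuine Hermitian line bundle near $\Sigma$ on $X$.

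The key point is the behaviour of $\rho_{X,mD}$ near $\Sigma$. In a uniformizing chart one compares $(\linebundle^{mD},h^{mD})$ with the Bargmann--Fock model $(\mathbb{C}^n,e^{-mD\pi|z|^2})$; since $k\mid D$, holomorphic sections of $\linebundle^{mD}$ there correspond to $\mathbb{Z}_k$-invariant entire functions, and projecting the Fock space onto this subspace by $\tfrac1k\sum_{g\in\mathbb{Z}_k}g^*$ gives, for the model Bergman density at a point $x$ of distance $r$ from $\Sigma$ along an eigendirection of the action,
$$ (mD)^{-n}\rho^{\mathrm{mod}}_{mD}=F_k\!\left(mD\pi r^2\right),\qquad F_k(t)=\sum_{j=0}^{k-1}e^{(e^{2\pi\sqrt{-1}j/k}-1)t}=k\,e^{-t}\sum_{l\geq 0}\frac{t^{kl}}{(kl)!}. $$
By the localization of (orbifold) Bergman kernels \cite{dailiuma1,mamari1}, the genuine kernel of $X$ satisfies $(mD)^{-n}\rho_{X,mD}(x)=F_k\!\left(mD\pi\,\dist(x,\Sigma)^2\right)+\Psi\!\left(m^{-1}\,\big|\,k,X\right)$, uniformly for such $x$ in a fixed neighbourhood of $\Sigma$; one may instead obtain this two-sided estimate directly from the model, the lower bound by a H\"ormander $L^2$ construction and the upper bound by a restriction/sub-mean-value estimate, both controlled by the Gaussian decay of the model peak sections. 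Now $F_k(0)=k$ and $F_k(t)\to1$ as $t\to\infty$, but $F_k(t)=k\,\Pr\!\left[N_t\equiv 0 \ (\mathrm{mod}\ k)\right]$ for $N_t\sim\mathrm{Poisson}(t)$, so a large-deviations estimate at $t$ of order $k$ gives $\inf_{t>0}F_k(t)\leq k\,e^{-ck}$ for an absolute constant $c>0$; hence $\inf_{t>0}F_k(t)\to0$ as $k\to\infty$, and already $\inf_{t>0}F_3(t)<1$.

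Now fix $\eta\in(0,1)$, choose $k$ with $\inf_{t>0}F_k(t)<\eta$ and then $t_*>0$ with $F_k(t_*)<\eta$, and take $X,(M_i),c,D$ as above. For each large $m$ put $r_m=(t_*/(mD\pi))^{1/2}$ and choose $x_m\in X$ at distance $r_m$ from $\Sigma$ along an eigendirection; then $(mD)^{-n}\rho_{X,mD}(x_m)<\eta$ once $m$ is large. Since the $M_i$ are Ricci-flat we may take $\Lambda$ with $mD>\Lambda/(2\pi)$ for all $m\geq1$, so Theorem \ref{thmcontinuousbergmankernel} gives $\rho_{M_i,mD}\to\rho_{X,mD}$ as $i\to\infty$; hence for each such $m$ there are $i(m)$ and $y_m\in M_{i(m)}$ with $(mD)^{-n}\rho_{M_{i(m)},mD}(y_m)<\eta$. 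As this occurs for arbitrarily large $m$ on manifolds satisfying all the hypotheses, no $m_0$ as in the Conjecture can exist, so the Conjecture is false. The delicate step is the uniformity of the orbifold Bergman asymptotics in the transition region at distance $\sim(mD)^{-1/2}$ from $\Sigma$ (so that the answer is exactly $F_k$ up to $\Psi(m^{-1})$), together with producing the approximating sequence with uniform geometric bounds --- routine for K3 orbifolds, and for large $k$ relying on the analogous facts for Calabi--Yau varieties with $A_{k-1}$ singularities.
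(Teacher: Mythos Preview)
Your proposal is correct and shares the paper's core strategy: compute the model Bergman kernel on a flat cyclic quotient (your $F_k$ is exactly the formula of Lemma~\ref{lmmcircleorbifoldbgmkn} restricted to a coordinate axis), show it drops below $1$ for $k\geq 3$, and then transport this to smooth polarized manifolds via Theorem~\ref{thmcontinuousbergmankernel}. The genuine difference is the geometric input. The paper uses Ding--Tian's sequence of K\"ahler--Einstein Del Pezzo surfaces converging to the Fano orbifold $\mathbb{CP}^2/\mathbb{Z}_3$ with three $A_2$ singularities, and for the orbifold-to-flat-model step it rescales and reapplies the convergence theorem (Proposition~\ref{propndimobfdcounterexp}); you instead use degenerating Ricci-flat K3 (or higher Calabi--Yau) metrics and invoke the Dai--Liu--Ma orbifold expansion directly. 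Your Ricci-flat choice has the pleasant feature that $|\Ric|\leq 1$ holds on the nose, with no rescaling of the polarization needed. Your Poisson interpretation $F_k(t)=k\,\Pr[N_t\equiv 0\ (\mathrm{mod}\ k)]$ and the observation that $\inf_t F_k(t)\to 0$ as $k\to\infty$ are sharper than the paper's Lemmas~\ref{numberlmm1}--\ref{lmmxy1}, which only produce \emph{some} point with $\rho<1$; in principle your argument disproves the lower bound for every $\eta\in(0,1)$ (at the price of letting $c\to 0$), whereas the paper's Theorem~\ref{dscounterexp} yields a single fixed $\epsilon>0$. The only soft spot is the existence, for large $k$, of smooth polarized Calabi--Yau approximations with uniform diameter and volume bounds: this is classical for K3 orbifolds with small $A_{k-1}$ but is asserted rather than constructed in higher dimensions. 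For the bare disproof (some $\eta$ close to $1$) the K3 case with $k=3$ already suffices, and there your argument is complete.
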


Unfortunately, the lower bound cannot always be sufficiently close to $1$ by increasing $m$. We give a counterexample as follows. The idea is to calculate the Bergman kernels on some complex spaces with non-trivial singularity and then use K\"ahler manifolds to approximate it. Actually, we show the following results by approximating a special orbifold.

\begin{thm}
\label{dscounterexp}
Given an integer $n\geq 2 $, there are constants $m_0 , \epsilon >0$ with the following property.

For each $m>m_0 $, we can find an $n$-dimensional polarized K\"ahler manifolds $\left( M ,\omega ,K_M^{-1} ,h \right) $ such that $\Ric \left( \omega \right) = 2\pi \omega $, and the $m$-th Bergman kernel satisfying that
$$ \inf_{x\in M} \rho_{\omega ,m } \left( x \right)  \leq ( 1 -\epsilon )m^n .$$
\end{thm}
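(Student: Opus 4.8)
The plan is to find a single fixed $n$-dimensional Fano orbifold $(Y, \omega_Y, K_Y^{-1}, h_Y)$ with $\Ric(\omega_Y) = 2\pi\omega_Y$, a quotient singularity, and a uniform non-collapsing constant $c>0$, such that the orbifold Bergman kernel $\rho_{Y,\omega_Y,K_Y^{-1},m}$ satisfies $\inf_Y \rho_{Y,m} \le (1-\epsilon)m^n$ for a fixed $\epsilon > 0$ along \emph{every} $m$ in an arithmetic progression forced by the order of the local isotropy group (so that all the relevant local orbifold charts actually produce sections). The natural first candidate is a weighted projective space or, more cleanly, a finite quotient $Y = \mathbb{C}^n/\Gamma$ compactified Fano-ly, e.g.\ $\mathbb{P}^n/(\mathbb{Z}/r)$ acting diagonally, equipped with the orbifold Kähler–Einstein metric. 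Near the singular point $y_0$ with isotropy group $\Gamma$ of order $r$, the local model for $H^0_{L^2}$ consists of $\Gamma$-invariant holomorphic sections, and an explicit computation (as in Dai–Liu–Ma for orbifolds, \cite{dailiuma1}, or a direct monomial count) shows that the leading coefficient of the Bergman kernel at $y_0$ is $m^n/r + O(m^{n-1})$ rather than $m^n$; thus for $m$ in the progression $r \mid mD$ we get $(mD)^{-n}\rho_{mD}(y_0) \to D^{-n}/r$, which is bounded away from $1$ whenever $r > 1$. Choosing $r$ and the $\Gamma$-action so that $D(c) = 1$ (i.e.\ arranging the non-collapsing constant $c$ with $c^{-1} < 1$, so $D=1$) makes this a genuine violation of the conjectured lower bound $\eta \le (mD)^{-n}\rho_{mD}$ with $\eta$ close to $1$.

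The second step is the approximation: by Theorem \ref{thmcontinuousbergmankernel} (in fact its converse direction, which is what we actually need here), since the orbifold $Y$ with its quotient singularity is realized as a Gromov–Hausdorff / complex-analytic limit of a sequence of smooth polarized Kähler–Einstein manifolds $(M_l, \omega_l, K_{M_l}^{-1}, h_l)$ with $\Ric(\omega_l) = 2\pi\omega_l$ and uniform non-collapsing $\Vol(B_1(p_l)) \ge v$, the Bergman kernels $\rho_{M_l, \mu, K_{M_l}^{-1}, h_l, m}$ converge to $\rho_{Y,\omega_Y,K_Y^{-1},m}$ uniformly on compact subsets of $Y_{reg}$ and, crucially, the convergence is uniform near $y_0$ in the sense that $\limsup_l \rho_{M_l,m}$ at points approaching $y_0$ is controlled by $\rho_{Y,m}(y_0)$. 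The existence of such a smoothing family is the key geometric input: for $\mathbb{P}^n/(\mathbb{Z}/r)$ or a weighted projective space one can take a family of smooth Fano hypersurfaces or toric crepant/partial smoothings degenerating to $Y$, and equip them with the Aubin–Yau or Kähler–Einstein metrics (existence for the smooth members and Gromov–Hausdorff convergence to the orbifold KE metric follows from the compactness theory of \cite{tg5, donsun1}). Then for each large $m$ (in the progression), picking $l$ large enough, $\inf_{x\in M_l}\rho_{\omega_l, m}(x) \le \rho_{Y,m}(y_0) + \Psi(l^{-1}) \le (1-\epsilon)m^n$ for a slightly shrunk $\epsilon$, which is exactly the assertion.

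The order of operations is therefore: (i) construct the model Fano orbifold $Y$ with a cyclic quotient singularity and verify its orbifold KE metric has uniform non-collapsing with the relevant constant $c$ arranged so that $D(c)=1$; (ii) compute the orbifold Bergman kernel asymptotics at the singular point via the $\Gamma$-invariant local model and extract the factor-$1/r$ drop in the leading term, giving a fixed $\epsilon_0 > 0$ with $\rho_{Y,m}(y_0) \le (1-\epsilon_0)m^n$ for all $m$ in the progression; (iii) exhibit the smoothing family $M_l \to Y$ with uniform Ricci and non-collapsing bounds and apply (the relevant half of) Theorem \ref{thmcontinuousbergmankernel} to transfer the estimate to the smooth $M_l$, absorbing the error into $\epsilon = \epsilon_0/2$ and $m_0$.

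The main obstacle I expect is step (iii): producing an \emph{explicit} sequence of \emph{smooth} polarized Kähler–Einstein Fano manifolds $(M_l, \omega_l, K_{M_l}^{-1})$ that Gromov–Hausdorff-converges to the chosen orbifold $Y$ while keeping $\linebundle_l = K_{M_l}^{-1}$ (so that $\Ric(h_l) = 2\pi\omega_l$ holds on the nose) and keeping the non-collapsing constant uniformly controlled — one must check that the limit line bundle $\linebundle_\infty$ really is $K_Y^{-1}$ as an orbifold line bundle and that the convergence of Bergman kernels from Theorem \ref{thmcontinuousbergmankernel} genuinely controls the infimum over $M_l$ near the singular fiber point (a priori the infimum could be attained at points drifting off to a region where convergence is weaker). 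The cleanest route around this is to choose $Y$ so that it is already a quotient $Y = Z/(\mathbb{Z}/r)$ of a smooth Fano $Z$ admitting a KE metric, with the $\mathbb{Z}/r$-action free away from isolated points, and take $M_l$ to be a family of smooth Fanos degenerating to $Y$ inside a fixed projective space (e.g.\ via the deformation-to-the-normal-cone or a pencil argument), invoking \cite{donsun1}'s partial $C^0$ and Gromov–Hausdorff compactness to guarantee the metric convergence; then the $\limsup$ control near $y_0$ follows from the semicontinuity built into the proof of Theorem \ref{thmcontinuousbergmankernel}.
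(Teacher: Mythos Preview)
Your high-level strategy---pick a K\"ahler--Einstein Fano orbifold with a cyclic quotient singularity, show its Bergman kernel dips below $(1-\epsilon)m^n$, then smooth it by a family of K\"ahler--Einstein manifolds and invoke Theorem~\ref{thmcontinuousbergmankernel}---is exactly what the paper does. But your step (ii) contains a genuine error that breaks the argument. You claim that at the singular point $y_0$ with isotropy $\Gamma$ of order $r$, the leading term of the Bergman kernel is $m^n/r$. In fact it is $r\cdot m^n$: in the local model $\mathbb{C}^n/\mathbb{Z}_q$ with trivial line bundle and metric $e^{-\pi|z|^2}$, the paper's explicit formula (Lemma~\ref{lmmcircleorbifoldbgmkn}) gives $\rho_{\omega_{Euc},1}(p(0))=q$, not $1/q$. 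The reason is that while the $\Gamma$-invariant monomials are only a $1/r$ fraction of all monomials, the orbifold measure is also $1/r$ of the Euclidean one, so the $L^2$ norms shrink and the Bergman kernel at the fixed point goes \emph{up}. So looking at $y_0$ itself gives you nothing.

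The paper's actual mechanism is different and more delicate: it shows (Lemmas~\ref{numberlmm1} and~\ref{lmmxy1}) that for $q\ge 3$ the model Bergman kernel on $\mathbb{C}^n/\mathbb{Z}_q$ \emph{oscillates} and dips strictly below $1$ at some regular point $x\neq p(0)$, coming from the cross terms $e^{\pi\sum_l |z_l|^2 e^{2\pi i k p_l/q_l}}$ in the explicit sum. This is then transferred to the compact orbifold by the blow-up argument in Proposition~\ref{propndimobfdcounterexp}, which produces a sequence $x_j\to y_0$ (not $y_0$ itself) with $\limsup j^{-n}\rho_j(x_j)\le 1-\epsilon$. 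For the smoothing, the paper uses the concrete Ding--Tian family of K\"ahler--Einstein del Pezzo surfaces degenerating to $\mathbb{CP}^2/\mathbb{Z}_3$ with $A_2$ singularities, rather than an abstract deformation argument. Finally, your attempt to arrange $D(c)=1$ cannot work: any isolated quotient singularity of order $r>1$ forces the volume-ratio constant $c\le 1/r$, hence $c^{-1}\ge r$ and $D(c)\ge r$; but this is irrelevant to Theorem~\ref{dscounterexp} as stated, which does not mention $D$.
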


\begin{rmk}
By the Bonnet-Myers Theorem, $\Ric \left( \omega \right) = 2\pi \omega $ implies that $\diam (M,\omega ) < 2\pi $. 
\end{rmk}

By choosing $\eta >1-\epsilon $, we disprove the lower bound in Donaldson-Sun's conjecture. 

\begin{coro}
The lower bound in Conjecture \ref{dsconj} is false.
\end{coro}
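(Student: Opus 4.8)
The corollary is a soft consequence of Theorem~\ref{dscounterexp}; all the analytic content lives in that theorem, and what remains is only to check that the counterexample manifolds meet the hypotheses of Conjecture~\ref{dsconj} for a suitable choice of parameters and then to negate the existential quantifier on $m_0$. Let $n\geq 2$, $\epsilon>0$ and $m_0'$ be the constants produced by Theorem~\ref{dscounterexp} and fix $\eta\in(1-\epsilon,1)$; I will show that no $m_0$ can work in Conjecture~\ref{dsconj} for this $n$, this $\eta$, and the $c,d$ recorded below.

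First I would normalize the geometry. Each manifold $(M,\omega,K_M^{-1},h)$ of Theorem~\ref{dscounterexp} is Fano with $\Ric(\omega)=2\pi\omega$, hence compact with $\diam(M,\omega)<2\pi$ by Bonnet--Myers. Fix an integer $\lambda>2\pi$; then $(M,\lambda\omega,(K_M^{-1})^\lambda,h^\lambda)$ is again a polarized K\"ahler manifold, now with $|\Ric|\leq 1$ and $\diam<2\pi\sqrt{\lambda}=:d$, and with the same (scale-invariant) volume ratios. The one bookkeeping point is that the $k$-th Bergman kernel $\rho^{\mathrm{new}}_{k}$ of the new polarization equals $\lambda^{-n}$ times the $(\lambda k)$-th Bergman kernel of $(K_M^{-1},h)$ with respect to $(\omega,h)$, so that $k^{-n}\rho^{\mathrm{new}}_{k}=(\lambda k)^{-n}\rho^{\mathrm{old}}_{\lambda k}$. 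The non-collapsing condition $\Vol(B_r)\geq c\,\tfrac{\pi^n}{n!}r^{2n}$ for $r\in(0,d)$ holds with a uniform $c>0$ because the manifolds in Theorem~\ref{dscounterexp} are constructed to Gromov--Hausdorff approximate one fixed compact K\"ahler--Einstein orbifold with uniformizing groups of bounded order, so Bishop--Gromov monotonicity transfers that orbifold's volume ratio lower bound to every approximant and every radius; with $c$ fixed, $D=D(c)$ is a fixed integer.

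Now suppose for contradiction that some $m_0$ realizes the conclusion of Conjecture~\ref{dsconj} for $n,c,d,\eta$. Pick an integer $m>\max\!\left(m_0,\ m_0'/(\lambda D)\right)$ and apply Theorem~\ref{dscounterexp} with parameter $\lambda mD>m_0'$: it yields a polarized K\"ahler manifold which, after the rescaling above, satisfies all hypotheses of Conjecture~\ref{dsconj} and has $\min_{x}\rho^{\mathrm{old}}_{\omega,\lambda mD}(x)\leq(1-\epsilon)(\lambda mD)^n$, the minimum being attained by compactness. At a minimizing point $x_0$,
$$ (mD)^{-n}\rho_{mD}(x_0)=(\lambda mD)^{-n}\rho^{\mathrm{old}}_{\omega,\lambda mD}(x_0)\leq 1-\epsilon<\eta , $$
while $m>m_0$, contradicting the asserted lower bound $\eta\leq(mD)^{-n}\rho_{mD}$. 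Therefore no such $m_0$ exists and the lower bound in Conjecture~\ref{dsconj} is false.

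The only real step here is the normalization in the second paragraph: following the rescaling that turns $\Ric(\omega)=2\pi\omega$ into $|\Ric|\leq1$ through the Bergman kernel (which merely relabels the degree by the factor $\lambda$) and confirming the uniform non-collapsing constant, hence the fixed $D$, for the approximating manifolds. Everything else reduces to the one-line remark that a family of polarized manifolds with $\min_x m^{-n}\rho_m(x)\leq 1-\epsilon$ for all large degrees $m$ cannot satisfy any uniform lower bound with a constant $\eta>1-\epsilon$; the substantive work is entirely inside Theorem~\ref{dscounterexp}.
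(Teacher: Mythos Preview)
Your proof is correct and takes the same approach as the paper, which derives the corollary in one line by choosing $\eta>1-\epsilon$ and invoking Theorem~\ref{dscounterexp}. You have supplied the bookkeeping the paper leaves implicit---the rescaling by an integer factor $\lambda>2\pi$ to meet the normalization $|\Ric|\leq 1$, the scale-invariance of the volume ratio, and the uniform non-collapsing constant coming from the fact that the approximants lie in a single Gromov--Hausdorff convergent family---but the substance is identical.
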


This paper is organized as follows. In Section \ref{Preparatoryresults}, we collect some preliminary results that will be used many times. Then we will prove Theorem \ref{thmuniquenesskahlerpotential} in Section \ref{limitlinebundlesection}. The proof of Theorem \ref{thmcontinuousbergmankernel} is presented in Section \ref{convergencebergmankernelsection}. In Section \ref{l2estghlimitsection}, we give a version of $L^2$ estimate on the Gromov-Hausdorff limit. Finally, the proofs of Proposition \ref{lpestprop}, Proposition \ref{refinementofbkk} and Theorem \ref{dscounterexp} are contained in Section \ref{applicationsection}.

\vspace{0.2cm}

\textbf{Acknowledgement.} The author wants to express his deep gratitude to Professor Gang Tian for suggesting this problem and constant encouragement. He would also be grateful to Bojie He, Wenshuai Jiang, Gan Li, Minghao Miao, Feng Wang, Kewei Zhang and Ziyi Zhao for many valuable comments. The author also thanks Zexing Li for reading the earlier version carefully, pointing out mistakes and typos, and giving many suggestions about writing the article.

\section{Preliminaries}
\label{Preparatoryresults}

We recall here some basic notations and results that will be needed throughout the paper.

\subsection{The metrics of Gromov-Hausdorff limits}

    We need the Gromov-Hausdorff convergence theory in this paper. For the deﬁnition and basic properties of Gromov-Hausdorff convergence, we refer to \cite{dbybsi1}, \cite{pp1}. This theory has been developed by Anderson, Cheeger, Colding, Gromov, Jiang, Naber, Tian and others. Here we only mention some recent results. Given a metric space $X$ we say that a metric ball $B_r (x) \subset X$ is $(k,\epsilon )$-$symmetric$ if $d_{GH} \left( B_r(x) ,B^{\mathbb{R}^k \times C(Z) }_r ( (0^k ,o) ) \right) < \epsilon r $ for some metric cones $\mathbb{R}^k \times C(Z) $. We define the $r$-$scale$ $(k,\epsilon )-singular$ $set$ $\mathcal{S}^k_{\epsilon ,r}$ to be
$$\mathcal{S}^k_{\epsilon ,r} (X)= \left\lbrace \; x\in X : \textrm{ for no $s\in (r,1] $ is } B_s (x) \textrm{ a $(k+1 ,\epsilon )$-symmetric ball } \right\rbrace .  $$
The following Minkowski type estimate is proved by Cheeger-Jiang-Naber \cite[Theorem 1.7]{jcwsjan1}.
\begin{thm}[\cite{jcwsjan1}, Theorem 1.7]
\label{jcwsjan1thm17coro}
For each $\epsilon >0$, there exists $C=C(n,v,\epsilon )>0$ such that the following estimate holds. Let $\left( M_i ,g_i ,p_i \right)$ be a sequence of $n$-dimensional pointed Riemannian manifolds converging to $(X,d,p)$ in the Gromov-Hausdorff sense with $\Ric\left( g_i \right) \geq -(n-1)\; g_i $ and $ \Vol\left( B_{1} \left( p_i \right) \right) \geq v $. Then
$$ \Vol \left( B_r \left( \mathcal{S}^k_{\epsilon ,r} (X) \right) \cap B_1 (x)  \right) \leq Cr^{n-k} .$$
\end{thm}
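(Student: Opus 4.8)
The plan is to convert the Minkowski-type volume estimate into a covering-number estimate and then establish the latter by an induction over dyadic scales, driven by Bishop--Gromov volume monotonicity and the cone-splitting/almost-rigidity machinery of Cheeger--Colding. First, a Vitali argument reduces the statement to the claim that $S := \mathcal{S}^k_{\epsilon,r}(X) \cap B_1(x)$ can be covered by at most $C(n,v,\epsilon)\, r^{-k}$ balls of radius $r$: if $\{B_r(y_i)\}_{i=1}^N$ is a maximal disjoint family with $y_i \in S$, then $B_r(S) \cap B_1(x) \subset \bigcup_i B_{3r}(y_i)$, while the non-collapsing hypothesis and Bishop--Gromov give $c(n,v)\, r^n \le \Vol(B_r(y_i))$ and $\Vol(B_{3r}(y_i)) \le C(n)\, r^n$ for $y_i \in B_2(x)$, so $N \le C r^{-k}$ yields $\Vol(B_r(S)\cap B_1(x)) \le C r^{n-k}$. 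One then proves, by downward induction on $r_j = 2^{-j}$ with $r \le r_j \le 1$, that $S$ is covered by $N_j \le C r_j^{-k}$ balls of radius $r_j$.

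The two analytic inputs are: (i) \emph{volume monotonicity} --- for $y \in B_2(x)$ the normalized ratio $\mathcal{V}_y(s) := \Vol(B_s(y)) / \mathrm{Vol}_{-1}(s)$, where $\mathrm{Vol}_{-1}(s)$ is the volume of an $s$-ball in the simply connected space form of curvature $-1$, is non-increasing in $s$ by Bishop--Gromov, and by non-collapsing it stays in a fixed window $[c(n,v), \V_{2n}]$ for $s \le 1$, so $\sum_j \big( \mathcal{V}_y(r_j) - \mathcal{V}_y(r_{j+1}) \big) \le \V_{2n} - c(n,v)$; and (ii) the \emph{cone-splitting / almost-rigidity} principle --- by Cheeger--Colding's ``almost volume cone implies almost metric cone'', a small volume pinching $\mathcal{V}_y(\eta s) - \mathcal{V}_y(s)$ forces $B_{\eta s}(y)$ to be $(0,\delta)$-symmetric, and if $B_s(y)$ moreover contains $k{+}1$ points in a $\delta$-independent configuration each centering a $(0,\delta)$-symmetric ball at scale comparable to $s$, then $B_s(y)$ is $(k{+}1,\epsilon)$-symmetric.

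These combine to give, at each dyadic scale $r_j$ and each point $y\in S$, the dichotomy: either $y$ records a definite volume drop $\mathcal{V}_y(r_j) - \mathcal{V}_y(\eta r_j) \ge \theta(n,\delta)>0$ at that scale --- which, by the summability above, can happen for at most $L(n,v,\delta)$ scales along the history of any fixed $y$ --- or $B_{\eta r_j}(y)$ is $(0,\delta)$-symmetric. In the latter situation, if within a covering ball $B_{r_j}(y_i)$ the almost-symmetry points of $S$ spanned a $\delta$-independent $(k{+}1)$-dimensional configuration, then cone-splitting would make a nearby ball $(k{+}1,\epsilon)$-symmetric at a scale exceeding $r$, contradicting that its center lies in $S$; hence those points are confined to a controlled neighborhood of a $k$-dimensional affine-type set inside $B_{r_j}(y_i)$ and do not enlarge the effective $k$-dimensional count when one passes to scale $r_{j+1}$. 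A bookkeeping of the two alternatives over the roughly $|\log_2 r|$ scales then produces the covering bound $N \le C r^{-k}$, hence the theorem.

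The main difficulty is making the bookkeeping \emph{sharp}. The naive form of the induction loses a multiplicative factor of order $2^k(1+\Psi)$ per scale on the ``good'' balls, which compounds to $r^{-k-\Psi|\log r|}$ and only establishes the Minkowski-\emph{dimension} bound rather than the content bound $Cr^{n-k}$. Removing the parasitic $r^{-\Psi|\log r|}$ is exactly the content of the finer structural result of Cheeger--Jiang--Naber --- the neck decomposition, which writes $B_1(x)$ as a union of neck regions $\mathcal{N}_a$, manifold-like body regions, and an exceptional set, with the crucial summability $\sum_a r_a^{\,n-k} \le C(n,v,\epsilon)$ --- together with the sharp content estimate on a single neck region, derived from the bi-Hölder behaviour of a $(k{+}1)$-splitting map onto its image away from a small set. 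That packaging, rather than the cone-splitting and monotonicity inputs recalled above, is where the real work of \cite{jcwsjan1} lies; those inputs are precisely what make the neck decomposition and its summability estimate run.
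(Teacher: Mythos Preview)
The paper does not prove this statement at all: it is quoted verbatim as Theorem~1.7 of Cheeger--Jiang--Naber \cite{jcwsjan1} in the preliminaries section and used as a black box throughout. So there is no ``paper's own proof'' to compare against.

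Your outline is a faithful sketch of the actual argument in \cite{jcwsjan1}: the Vitali reduction to a covering count, the dyadic induction driven by the volume-pinching/cone-splitting dichotomy, and---crucially---the honest acknowledgment that the naive induction only yields the Hausdorff/Minkowski \emph{dimension} bound $r^{-k-\Psi|\log r|}$ and that the sharp content estimate $Cr^{n-k}$ requires the neck decomposition with the summability $\sum_a r_a^{\,n-k}\le C$ and the sharp neck-structure theorem. That last paragraph is the correct diagnosis of where the depth lies, and it is precisely what distinguishes \cite{jcwsjan1} from the earlier Cheeger--Naber quantitative stratification. One small slip: you write $\V_{2n}$ for the density upper bound, but the statement here is about real $n$-dimensional Riemannian manifolds, so the relevant constant is the volume of the unit ball in $\mathbb{R}^n$; the $2n$ is an artifact of the ambient K\"ahler setting elsewhere in the paper.
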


\subsection{Convergence of line bundles and K\"ahler spaces}

Let $\left( M_l ,\omega_l , p_l \right) $ be a sequence of pointed complete K\"ahler manifolds converges to $(X,d,p)$. Suppose that there is a complex space structure on the topological space $X$ (see \cite{dm1}, Definition 5.2 ). We say that the sequence of complex structures on $M_l $ converges to $X $, if for each $x\in X $, we can find a constant $r>0 $, a sequence of points $x_l \in M_l $ and holomorphic maps $ F_l = \left( z_{l,1} , \cdots ,z_{l,N} \right) : B_{r} (x_l ) \to \mathbb{C}^N $, such that $x_l $ converges to $x$, and $F_l $ converges to an injective holomorphic map $F$ on $B_{r} (x )$. By the proper mapping theorem, these $F_l $ and $F$ can be chosen as holomorphic charts for $x\in X_{reg} $. 

We further assume that $ \left( \linebundle_l ,h_l \right) $ be a sequence of Hermitian holomorphic line bundles on $M_l $, and $\linebundle_\infty $ be a holomorphic line bundle on $X $ with a continuous Hermitian metric $h_\infty $. We say that the sequence of holomorphic line bundles $ \linebundle_l $ converges to $\linebundle_{\infty}  $, if there are a sequence of open coverings $\left\lbrace U_{j,l} \right\rbrace_{j\in J} $ of $M_l $ and an open covering $\{ U_j \}_{j\in J} $ of $X$, satisfying that $\linebundle_l \big|_{U_{j,l}} $ is trivial, $\forall j,l $, and we can find local frames $s_{j,l} \in H^0 \left( U_{j,l} , \linebundle_l \right) $ such that the transition functions $f_{j,k,l} = s_{j,l} s_{k,l}^{-1} \in \mathscr{O} \left( U_{j,l} \cap U_{k,l} \right) $ converges to the transition functions $f_{j,k,\infty } $ of $\linebundle_\infty $, as $l\to\infty $. When $\left\Vert s_{j,l} \right\Vert_{h_l}  $ converges to $\left\Vert s_{j,\infty } \right\Vert_{h_\infty }  $, we say that the Hermitian line bundles $ \left( \linebundle_l ,h_l \right) $ converge to $ ( \linebundle_\infty ,h_\infty ) $. In the same way, we can define the convergence of line bundles on an open subset of $X$.

The following theorem is proved in Liu-Sz\'ekelyhidi \cite{lggs1}.

\begin{thm}[\cite{lggs1}, Theorem 1.1]
\label{complexstructurethmricci}
Let $\left( M_l ,\omega_l ,\linebundle_l ,h_l ,p_l \right) $ be a sequence of pointed complete polarized K\"ahler manifolds satisfying that $\Ric \left( \omega_l \right) \geq -\Lambda \omega_l $ and $\Vol \left( B_1 \left( p_l \right) \right) \geq v $, $\forall l\in\mathbb{N} $, where $\Lambda ,v>0$ are constants. Assume that $ \left( M_l ,\omega_l ,p_l \right)$ converges to $\left( X,d ,p_\infty \right) $ in the pointed Gromov-Hausdorff topology. Then $X$ has a structure of normal analytic variety such that there exists a subsequence of complex structures on $M_l$ converges to $X$.
\end{thm}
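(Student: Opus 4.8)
The plan is to combine the Cheeger--Colding(--Naber) structure theory for non-collapsed Gromov--Hausdorff limits with H\"ormander's $L^2$-estimate for $\partialbar$ so as to produce enough holomorphic sections of the polarizing bundles $\linebundle_l$ to see the complex structure of the limit, following Donaldson--Sun \cite{donsun1} and its refinement by Liu--Sz\'ekelyhidi \cite{lggs1} to the case of a mere lower Ricci bound. First, after passing to a subsequence, Cheeger--Colding theory writes $X = X_{reg}\sqcup X_{sing}$ with $X_{reg}$ open, dense, and carrying a smooth K\"ahler metric that is the $C^\infty_{loc}$-limit of the $\omega_l$ in harmonic charts; by the no-codimension-one-singularity theorem the singular set has Minkowski codimension at least two, quantitatively through Theorem \ref{jcwsjan1thm17coro} with $k\le n-2$, and every tangent cone of $X$ is a metric cone $C(Y)$ with vertex. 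In particular $X_{sing}$ admits Cheeger--Colding cutoff functions with arbitrarily small gradient energy, a fact used repeatedly below.

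The heart of the argument is a uniform partial $C^0$-estimate: there is $c=c(n,\Lambda,v)>0$ such that for each $x\in X$ one can choose $m_j\to\infty$ and points $x_l\to x$ with $m_j^{-n}\rho_{m_j}(x_l)\ge c$ on $M_l$ for all large $l$. Fix $x$ and a tangent cone $C(Y)$ at $x$, and rescale so that the metric of $M_l$ is Gromov--Hausdorff close to that of $C(Y)$ on a large ball around $x_l$. Since the curvature of $h_l$ is $2\pi\omega_l$, on such a ball $\linebundle_l^m$ is trivialized with weight $e^{-2\pi m\varphi_l}$ for a local potential $\varphi_l\ge 0$ vanishing at $x_l$ and comparable to $|z|^2$ in good holomorphic coordinates; the constant section, multiplied by a Cheeger--Colding cutoff supported near $x_l$, gives an \emph{approximately} holomorphic section $\sigma_l$ of $\linebundle_l^m$ whose $L^2$ mass concentrates near $x_l$ and with $\|\partialbar\sigma_l\|_{L^2}$ exponentially small, because the weight is strongly peaked. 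As $m\cdot 2\pi\omega_l+\Ric(\omega_l)\ge (2\pi m-\Lambda)\omega_l>0$ for the large $m$ we use, H\"ormander's estimate applied to $\linebundle_l^m\otimes K_{M_l}$ corrects $\sigma_l$ to a genuine holomorphic section $s_l$ with $\|s_l-\sigma_l\|_{L^2}$ small, and evaluation at $x_l$ yields the lower bound. Making the cutoff error and the closeness to the cone model quantitative and uniform is precisely where Liu--Sz\'ekelyhidi's Hadamard three-circle inequality for holomorphic functions on Ricci-limit spaces takes the place of Donaldson--Sun's two-sided Ricci bound, together with an induction on the dimension of the singular strata and the gap theorems of Anderson and Cheeger--Colding--Tian at the nearly-Euclidean points.

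Granting the uniform lower bound, the Bochner formula and a cutoff give the gradient estimate $\|\nabla s\|\le Cm^{1/2}\|s\|_{L^\infty}$ for holomorphic sections, and a now-standard argument produces, for a fixed large $m$, sections $s_{l,0},\dots,s_{l,N}$ of $\linebundle_l^m$ whose joint map $F_l:M_l\to\mathbb{CP}^N$ is uniformly Lipschitz on fixed balls, separates points at a definite scale, and is an immersion; after passing to a subsequence $F_l$ converges to a continuous $F:X\to\mathbb{CP}^N$ that is a homeomorphism onto a closed complex subvariety $W=F(X)$, by a Bishop-type normal-families compactness argument. Transporting the complex structure of $W$ through $F$ and matching it locally with $F_l\circ F^{-1}$ exhibits holomorphic charts on $X$ that are $C^\infty_{loc}$-limits of holomorphic charts near $x_l\in M_l$, and on $X_{reg}$ this structure agrees with the Cheeger--Colding one; this is the asserted convergence of complex structures. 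For normality one shows that a continuous function on an open $U\subset X$ which is holomorphic on $U\cap X_{reg}$ is holomorphic on $U$: it lies in $L^2_{loc}$, one mollifies it inside $X_{reg}$, uses the small-gradient-energy cutoff of $X_{sing}$ (afforded by the codimension-two estimate of Theorem \ref{jcwsjan1thm17coro}) to run H\"ormander once more and correct the mollification, and passes to the limit; thus the structure sheaf coincides with the sheaf of weakly holomorphic functions, $W$ is normal, and the structure is independent of all choices.

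The main obstacle is the uniform partial $C^0$-estimate near the singular set, i.e.\ constructing, uniformly in $l$ and with quantitative control at points whose tangent cones are singular, holomorphic sections of $\linebundle_l^m$ peaking at prescribed points using only a lower Ricci bound; this forces the three-circle theorem on Ricci-limit spaces and the stratified induction it requires. The remaining ingredients---the Cheeger--Colding packaging, the gradient estimate, the compactness of the embeddings, and the removable-singularity step for normality---are technically involved but essentially standard once that estimate is in place.
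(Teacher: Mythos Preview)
This theorem is not proved in the paper; it is quoted verbatim from \cite{lggs1} as a preliminary result (note the sentence ``The following theorem is proved in Liu--Sz\'ekelyhidi \cite{lggs1}'' immediately preceding it, and the label \texttt{[lggs1, Theorem 1.1]} in the theorem heading). There is therefore no proof in the present paper to compare your proposal against.

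Your sketch is a fair high-level summary of the Liu--Sz\'ekelyhidi argument, and you correctly identify the main new ingredient over Donaldson--Sun: replacing the two-sided Ricci bound by a three-circle lemma on Ricci-limit spaces so that the local partial $C^0$-estimate (Proposition~\ref{localpartialc0estimate} here) goes through with only a lower Ricci bound. Two inaccuracies are worth flagging. First, with only a lower Ricci bound Anderson's harmonic-radius estimate is unavailable, so you cannot claim that $X_{reg}$ carries a smooth K\"ahler metric arising as a $C^\infty_{loc}$ limit of the $\omega_l$; the holomorphic charts on $X$ come \emph{from} the partial $C^0$-estimate and the resulting Kodaira-type maps, not from prior metric regularity. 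Second, in the K\"ahler setting any tangent cone splitting off $\mathbb{R}^{2n-2}$ is already $\mathbb{C}^n$ (this is used in the paper, e.g.\ in the proof of Lemma~\ref{lmmlowercontinuitybergman}), so the singular set has real Minkowski codimension at least $4$, not $2$; the relevant stratum for Theorem~\ref{jcwsjan1thm17coro} is $\mathcal{S}^{2n-4}$, not $\mathcal{S}^{n-2}$.
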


The key step of Liu-Sz\'ekelyhidi's proof of Theorem \ref{complexstructurethmricci} is to establish the following local partial $C^0$-estimate.

\begin{prop}[\cite{lggs1}, Proposition 3.1]
\label{localpartialc0estimate}
Given $v,\xi >0$ there are $K,\epsilon ,C>0 $ with the following property. Let $\left( M ,\omega ,\linebundle ,h \right) $ be a complete polarized K\"ahler manifold satisfying that $\Ric \left( \omega \right) \geq -\epsilon \omega $ and $\Vol \left( B_1 \left( p \right) \right) \geq v $. Suppose that $d_{GH} \left( B_{\epsilon^{-1} } (p) , B_{\epsilon^{-1} } (o) \right) \leq\epsilon $ for a metric cone $(V,o)$. Then $\linebundle^m $ admits a global section $s\in H_{L^2}^0 \left( M,\linebundle^m \right) $ such that $\left\Vert s \right\Vert_{L^2} \leq C $, and $ \left| \left\Vert s \right\Vert^2 - e^{-m\pi d(x,p) } \right| \leq \xi $ for $x\in B_{1} (p) $.
\end{prop}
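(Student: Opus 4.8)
The plan is to follow the peak-section method of Donaldson--Sun \cite{donsun1} and argue by contradiction. Suppose the statement fails for some $v,\xi>0$; then for each $i\in\mathbb{N}$ there is a complete polarized K\"ahler manifold $(M_i,\omega_i,\linebundle_i,h_i,p_i)$ with $\Ric(\omega_i)\ge-\tfrac{1}{i}\,\omega_i$, $\Vol(B_1(p_i))\ge v$ and $d_{GH}\bigl(B_i(p_i),B_i(o_i)\bigr)\le\tfrac{1}{i}$ for some metric cone $(V_i,o_i)$, such that no holomorphic section of the relevant power of $\linebundle_i$ with $L^2$-norm bounded independently of $i$ obeys the asserted pointwise estimate on $B_1(p_i)$. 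Passing to a subsequence, $(M_i,\omega_i,p_i)$ converges in the pointed Gromov--Hausdorff sense to $(X,d_X,p_\infty)$, and since the balls $B_i(p_i)$ become arbitrarily close to cones, $X$ is itself a metric cone $C(Z)$ with vertex $p_\infty$. By Cheeger--Colding(--Tian) theory, using $\Vol(B_1(p_i))\ge v$ and $\Ric(\omega_i)\to0$, the space $X$ is non-collapsed, its singular set $X_{sing}$ has complex codimension $\ge2$, and the regular set $\mathcal{R}=X_{reg}$ is open, dense and carries a Ricci-flat K\"ahler cone metric $\omega_X$; moreover, on $\mathcal{R}$ the metrics $\omega_i$ and complex structures $J_i$ converge in $C^\infty_{loc}$ to $(\omega_X,J_\infty)$, and the Hermitian bundles $(\linebundle_i,h_i)$ converge locally to a Hermitian holomorphic line bundle $(\linebundle_\infty,h_\infty)$ on $\mathcal{R}$ with curvature $2\pi\omega_X$.

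On the cone $C(Z)$, with radial function $r=d_X(\cdot,p_\infty)$ and K\"ahler potential $r^2$ for $\omega_X$, the constant section in a frame adapted to this potential gives, for a suitable power $m$ whose size is bounded in terms of $v$ (this is the role of $K$), a holomorphic section $\sigma_\infty$ of $\linebundle_\infty^m$ near $p_\infty$ whose squared pointwise norm is the model radial profile of the conclusion (equal to $e^{-m\pi r^2}$ in the normalization where $C(Z)=\mathbb{C}^n$); it extends across $X_{sing}$ because that set has codimension $\ge2$ and $\sigma_\infty$ is bounded, and it is globally $L^2$ with $\|\sigma_\infty\|_{L^2}\le C_0=C_0(n,v)$ by the cone structure. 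The next step is to transplant $\sigma_\infty$ to $M_i$: using the $C^\infty_{loc}(\mathcal{R})$-convergence of $J_i$ and of $(\linebundle_i,h_i)$, a Gromov--Hausdorff approximation, and a cut-off that is $\equiv1$ on $B_1(p_i)$ and transitions only in a shell $\{R\le d(\cdot,p_i)\le2R\}$ (where the profile is $\le e^{-\pi m R^2}$) and in a thin tube around the region lying over $X_{sing}$, I would obtain a smooth section $\widetilde\sigma_i$ of $\linebundle_i^m$ supported in $B_{2R}(p_i)$ with: (i) $\widetilde\sigma_i$ holomorphic on $B_1(p_i)$; (ii) $\bigl|\,\|\widetilde\sigma_i\|^2-(\text{model profile})\,\bigr|\to0$ uniformly on $B_1(p_i)$; (iii) $\|\widetilde\sigma_i\|_{L^2}\le C_0+o(1)$; and (iv) $\|\partialbar\widetilde\sigma_i\|_{L^2(M_i)}\to0$, where for (iv) near the shell the section is exponentially small and near $X_{sing}$ one uses a logarithmic cut-off together with the Minkowski-type estimate Theorem \ref{jcwsjan1thm17coro}, so that $\int|\partialbar(\text{cut-off})|^2\,\|\widetilde\sigma_i\|^2\to0$.

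One then corrects $\widetilde\sigma_i$ by H\"ormander's $L^2$-method: since $\Ric(\omega_i)\ge-\tfrac{1}{i}\,\omega_i$ and the curvature of $h_i^m$ equals $2\pi m\,\omega_i$, the Bochner--Kodaira positivity $\Ric(\omega_i)+2\pi m\,\omega_i\ge(2\pi m-\tfrac{1}{i})\,\omega_i>0$ holds, so there is $u_i$ with $\partialbar u_i=\partialbar\widetilde\sigma_i$ and $\|u_i\|_{L^2}\le C\,\|\partialbar\widetilde\sigma_i\|_{L^2}\to0$; then $s_i:=\widetilde\sigma_i-u_i\in\hl(M_i,\linebundle_i^m)$ is holomorphic with $\|s_i\|_{L^2}\le C_0+1$ for $i$ large. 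Since $u_i=\widetilde\sigma_i-s_i$ is holomorphic on $B_1(p_i)$, the nonnegative function $f_i=\|u_i\|^2_{h_i^m}$ satisfies $\Delta_{\omega_i}f_i\ge-C(n)\,m\,f_i$ there, so Li--Schoen's local maximum principle (Moser iteration under $\Ric\ge-\tfrac{1}{i}\,\omega_i$ and $\Vol(B_1(p_i))\ge v$) gives $\sup_{B_1(p_i)}f_i\le C(n,v,m)\,\|u_i\|_{L^2}^2\to0$. Combining (ii) with this $C^0$-bound on $u_i$ yields $\bigl|\,\|s_i\|^2-(\text{model profile})\,\bigr|\to0$ uniformly on $B_1(p_i)$, so for $i$ large $s_i$ has all the required properties with $\|s_i\|_{L^2}\le C_0+1$, contradicting the choice of $(M_i,\omega_i,\linebundle_i,h_i,p_i)$ and producing the constants $\epsilon=\epsilon(n,v,\xi)>0$, $K=K(v)$ and $C=C_0+1$.

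I expect the main obstacle to be the transplantation step: turning Gromov--Hausdorff closeness to a (possibly singular) cone into an honest smooth almost-holomorphic section of $\linebundle_i^m$ with the prescribed profile near $p_i$. Metric closeness alone does not detect the complex structure, so one must first upgrade to $C^\infty_{loc}$-convergence of the complex structures on $\mathcal{R}$ (Cheeger--Colding $\epsilon$-regularity, together with parallelism of $J$ and elliptic bootstrapping on the bundle), and then --- the genuinely delicate point --- fix a gauge in which the local weight of $h_i^m$ near $p_i$ is $C^0$-close to the cone weight $\pi m\,r^2$, which amounts to comparing K\"ahler potentials across the Gromov--Hausdorff limit; controlling the region near the (possibly singular) vertex and near $X_{sing}$ is exactly where the codimension of $X_{sing}$ and the Minkowski estimate Theorem \ref{jcwsjan1thm17coro} become essential.
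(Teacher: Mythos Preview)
The paper does not prove this proposition: it is quoted verbatim from Liu--Sz\'ekelyhidi \cite{lggs1} and used as a black box. So there is no ``paper's own proof'' to compare against; what matters is whether your outline would reproduce the result of \cite{lggs1}.

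There is a genuine gap. Your argument invokes Cheeger--Colding--Tian $\epsilon$-regularity to obtain $C^\infty_{loc}$ convergence of $(\omega_i,J_i)$ on the regular set of the limiting cone, and then transplants the model Gaussian section back. But the $\epsilon$-regularity theorem that upgrades Gromov--Hausdorff closeness to $C^\infty$ convergence requires a \emph{two-sided} Ricci bound (Anderson harmonic radius, or CCT); the hypothesis here is only $\Ric(\omega_i)\ge -\tfrac{1}{i}\omega_i$. Under a one-sided bound there is no a priori control of the injectivity radius or of the complex structure in any $C^k$ norm, so the step ``on $\mathcal{R}$ the metrics $\omega_i$ and complex structures $J_i$ converge in $C^\infty_{loc}$'' simply fails as written. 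This is not a technicality that can be patched by elliptic bootstrapping: without an upper Ricci bound the Cheeger--Colding regular set carries no smooth K\"ahler structure to which $J_i$ could converge, and indeed establishing any complex-analytic structure on such limits is the main content of \cite{lggs1}. Your sentence ``the Hermitian bundles $(\linebundle_i,h_i)$ converge locally to $(\linebundle_\infty,h_\infty)$ on $\mathcal{R}$'' is likewise circular in this context: in the present paper (Proposition \ref{constructionlinebundleprop}) that convergence is \emph{deduced from} the very local partial $C^0$-estimate you are trying to prove.

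What Liu--Sz\'ekelyhidi actually do is quite different from the Donaldson--Sun scheme you sketch. They avoid $\epsilon$-regularity entirely by running a local Ricci flow (via Hochard's and Lee--Tam's constructions) on large balls in $M_i$; the flow instantly produces bounded curvature and hence good holomorphic coordinates, and the Cheeger--Colding almost-splitting functions are shown to be close to genuine holomorphic functions through parabolic estimates rather than elliptic ones. The peak section is then built directly on $M_i$ from these holomorphic coordinates, without ever passing through a smooth limit cone. Your outline would work essentially as written under $|\Ric|\le\Lambda$ (that is the Donaldson--Sun setting), but under only a lower bound the transplantation step you flag as ``the main obstacle'' is not just delicate---it is the entire theorem, and it requires the parabolic machinery of \cite{lggs1}.
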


As in \cite{egz1} and \cite{lott1}, a K\"ahler space consists of a complex space with an open covering $\left\lbrace U_j \right\rbrace_{j=1}^{\infty} $ and a sequence of continuous plurisubharmonic functions $\psi_j $ on $U_j $, such that $\psi_j - \psi_{j'} $ is pluriharmonic on $U_j \cap U_{j'} $ if $U_j \cap U_{j'} \neq \emptyset $. Two such collections $\lbrace ( U_j ,\psi_j ) \rbrace $ and $\lbrace ( \widehat{U}_k ,\widehat{\psi}_k ) \rbrace $ are equivalent if $\psi_j -\widehat{\psi}_k $ is pluriharmonic on $U_j \cap \widehat{U}_k $ when $U_j \cap \widehat{U}_k \neq \emptyset $. 

Let $\left( M_l ,\omega_l , p_l \right) $ be a sequence of pointed complete K\"ahler manifolds converges to $(X,d,p)$. Suppose that there is a K\"ahler space structure $\lbrace ( U_j ,\psi_j ) \rbrace $ on $V\subset X$. We say that the K\"ahler potentials converge to the K\"ahler space structure $\lbrace ( U_j ,\psi_j ) \rbrace $ in the Gromov-Hausdorff sense, if for each $x\in U_j \subset V$, there exists a constant $r>0$, a sequence of points $x_l \to x $ and a sequence of K\"ahler potentials $\psi_{l,x} $ on $B_r \left( x_l \right) $, such that $\psi_{l,x} $ converge to a pluriharmonic function $\psi_{\infty ,x}$ on $B_r ( x) \subset V $, and $\psi_{\infty ,x} -\psi_j $ is pluriharmonic.

\subsection{H\"omander's \texorpdfstring{$L^2$}{Lg} estimate}
    Now we state the H\"omander's $L^2$ estimate as follows without proof. The proof can be found in \cite{dm1} and \cite{lh1}. We note that the $L^2$ estimate also holds for complete K\"ahler orbifold and weakly pseudoconvex manifold (the metric is not assumed to be complete).
\begin{prop}
\label{l2m}
Let $(M,\omega )$ be an $n$-dimensional complete K\"ahler manifold. Let $(\linebundle ,h)$ be a Hermitian holomorphic line bundle, and $\psi $ be a function on $M$, which can be approximated by a decreasing sequence of smooth function $\left\lbrace \psi_i \right\rbrace_{i=1}^{\infty} $. Suppose that $$\sqrt{-1}\partial\bar{\partial} \psi_i + \Ric(\omega )+\Ric(h)  \geq c\omega $$
for some positive constant $c >0 $. Then for any $\linebundle $-valued $(0,q)$-form $\zeta\in L^{2}$ on $M$ with $\bar{\partial} \zeta =0$ and $\int_{M} ||\zeta ||^{2} e^{-\psi} \omega^n $ finite, there exists an $\linebundle $-valued $(0,q-1)$-form $u\in L^{2}$ such that $\partialbar u=\zeta$ and $$\int_{M} \Vert u\Vert^{2} e^{-\psi } \omega^n \leq \int_{M} c^{-1} \Vert \zeta \Vert^{2} e^{-\psi} \omega^n ,$$
where $||\cdot ||$ denotes the norms associated with $h$ and $\omega $, and $q=1,\cdots ,n$.
\end{prop}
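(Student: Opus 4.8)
The plan is to prove the estimate first for a smooth weight and then recover the stated conclusion from the monotone approximation $\psi_i \downarrow \psi$. For a fixed smooth $\psi_i$ I would absorb the weight into the fibre metric, working with the twisted Hermitian metric $h e^{-\psi_i}$ on $\linebundle$, whose Chern curvature is $\Ric(h)+\sqrt{-1}\partial\partialbar\psi_i$. Introduce the weighted spaces $L^2_{(0,\bullet)}(M,\linebundle)$ with inner product $\langle\cdot,\cdot\rangle_i=\int_M\langle\cdot,\cdot\rangle e^{-\psi_i}\omega^n$, together with the closed, densely defined operators $T=\partialbar\colon L^2_{(0,q-1)}\to L^2_{(0,q)}$ and $S=\partialbar\colon L^2_{(0,q)}\to L^2_{(0,q+1)}$, which satisfy $S\circ T=0$. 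By Hörmander's duality lemma, to produce $u$ with $Tu=\zeta$ and $\|u\|_i^2\leq c^{-1}\|\zeta\|_i^2$ it suffices to establish the a priori inequality $|\langle\zeta,v\rangle_i|^2\leq c^{-1}\|\zeta\|_i^2\,\|T^*v\|_i^2$ for every $v\in\mathrm{Dom}(T^*)\cap\ker S$. Restricting to $\ker S$ is legitimate: $\zeta\in\ker S$ because $\partialbar\zeta=0$, while $(\ker S)^\perp\subseteq(\overline{\mathrm{Im}\,T})^\perp=\ker T^*$, so the component of $v$ in $(\ker S)^\perp$ contributes to neither side.

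The heart of the matter is this a priori estimate, which I would derive from the Bochner--Kodaira--Nakano identity. To bring the term $\Ric(\omega)$ into play, I would use the natural isometry $\Lambda^{0,q}M\otimes\linebundle\cong\Lambda^{n,q}M\otimes(\linebundle\otimes K_M^{-1})$ to recast the problem as one for $(n,q)$-forms valued in $\linebundle\otimes K_M^{-1}$; since the fibre metric on $K_M^{-1}$ induced by $\omega$ has Chern curvature $\Ric(\omega)$, the relevant total curvature is $\Theta:=\Ric(h)+\sqrt{-1}\partial\partialbar\psi_i+\Ric(\omega)\geq c\omega$ by hypothesis. The Nakano inequality then gives, for compactly supported smooth $(n,q)$-forms $v$,
$$ \|\partialbar v\|_i^2+\|\partialbar^* v\|_i^2\;\geq\;\int_M\langle[\sqrt{-1}\Theta,\Lambda]v,v\rangle\,e^{-\psi_i}\omega^n. $$
Diagonalising $\Theta$ in an $\omega$-orthonormal coframe shows that $[\sqrt{-1}\Theta,\Lambda]$ acts on $(n,q)$-forms with eigenvalues equal to sums of $q$ among the pointwise eigenvalues $\gamma_1,\dots,\gamma_n$ of $\Theta$; each $\gamma_j\geq c$ and $q\geq 1$, so every such eigenvalue is $\geq qc\geq c$, whence $\langle[\sqrt{-1}\Theta,\Lambda]v,v\rangle\geq c|v|^2$. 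Thus $\|\partialbar v\|_i^2+\|\partialbar^* v\|_i^2\geq c\|v\|_i^2$, and for $v\in\ker S=\ker\partialbar$ this reads $\|T^*v\|_i^2\geq c\|v\|_i^2$; combined with Cauchy--Schwarz $|\langle\zeta,v\rangle_i|^2\leq\|\zeta\|_i^2\|v\|_i^2$ this is exactly the a priori inequality required.

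The step I expect to be the main obstacle is upgrading the Nakano inequality from compactly supported smooth forms to all $v\in\mathrm{Dom}(T^*)\cap\mathrm{Dom}(S)$, and this is precisely where the completeness of $\omega$ enters. Fixing a smooth exhaustion function and cutting off along its level sets (the Gaffney / Andreotti--Vesentini argument), completeness ensures that the gradients of the cutoffs can be made uniformly small, so that any such $v$ is approximated in the graph norm of $(\partialbar,\partialbar^*)$ by forms in $C^\infty_c$; this transfers the a priori estimate to the full domain. Granting this, the duality lemma yields for each $i$ a solution $u_i$ with $\partialbar u_i=\zeta$ and $\int_M\|u_i\|^2e^{-\psi_i}\omega^n\leq c^{-1}\int_M\|\zeta\|^2e^{-\psi_i}\omega^n\leq c^{-1}\int_M\|\zeta\|^2e^{-\psi}\omega^n$, the last step using $\psi_i\geq\psi$.

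Finally I would pass to the limit in $i$. For fixed $j$ and all $i\geq j$ one has $e^{-\psi_j}\leq e^{-\psi_i}$, so $\int_M\|u_i\|^2e^{-\psi_j}\omega^n\leq c^{-1}\int_M\|\zeta\|^2e^{-\psi}\omega^n$, and $\{u_i\}_{i\geq j}$ is bounded in $L^2(e^{-\psi_j})$. A weak-compactness and diagonal argument produces a weak limit $u$ with $\partialbar u=\zeta$ (since $\partialbar$ is closed under weak convergence) satisfying $\int_M\|u\|^2e^{-\psi_j}\omega^n\leq c^{-1}\int_M\|\zeta\|^2e^{-\psi}\omega^n$ for every $j$. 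Letting $j\to\infty$ and invoking monotone convergence ($e^{-\psi_j}\uparrow e^{-\psi}$) gives the asserted bound $\int_M\|u\|^2e^{-\psi}\omega^n\leq c^{-1}\int_M\|\zeta\|^2e^{-\psi}\omega^n$. The argument runs verbatim for each $q=1,\dots,n$, as the eigenvalue bound on $[\sqrt{-1}\Theta,\Lambda]$ holds in every degree.
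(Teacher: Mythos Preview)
The paper does not actually prove this proposition: it is stated in Section~\ref{Preparatoryresults} as a preliminary result, with the remark ``Now we state the H\"ormander's $L^2$ estimate as follows without proof. The proof can be found in \cite{dm1} and \cite{lh1}.'' There is therefore no in-paper argument to compare against.

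Your proposal is the standard proof one finds in those references (Demailly, H\"ormander): reduce to a smooth weight by absorbing $e^{-\psi_i}$ into the fibre metric, pass to $(n,q)$-forms with values in $\linebundle\otimes K_M^{-1}$ so that $\Ric(\omega)$ appears in the curvature, apply the Bochner--Kodaira--Nakano inequality together with the Andreotti--Vesentini density argument (which uses completeness) to obtain the a~priori estimate on $\mathrm{Dom}(T^*)\cap\ker S$, invoke functional-analytic duality to solve $\partialbar u_i=\zeta$ with the desired bound, and finally let $i\to\infty$ via weak compactness and monotone convergence. This is correct and is precisely the route taken in \cite{dm1}; nothing further is needed.
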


\section{Limits of polarizations and K\"ahler potentials}
\label{limitlinebundlesection}

In this section, we consider the convergence of polarizations and K\"ahler potentials. Then we will prove the uniqueness of the limit of K\"ahler potentials.

Let $\left( M_l ,\omega_l ,\linebundle_l ,h_l ,p_l \right) $ be a sequence of pointed complete polarized K\"ahler manifolds satisfying that $\Ric \left( h_l \right) = 2\pi \omega_l $, $\Ric \left( \omega_l \right) \geq -\Lambda \omega_l $ and $\Vol \left( B_1 \left( p_l \right) \right) \geq v $, $\forall l\in\mathbb{N} $, where $\Lambda ,v>0$ are constants. Suppose that $ \left( M_l ,\omega_l ,p_l \right) \to \left( X ,d ,p \right) $ in the pointed Gromov-Hausdorff topology, and the normal complex space structure of $X$ is the limit of the sequence of complex structures of $M_l$. Then we can construct the limit of the Hermitian line bundles. The following proposition is a corollary of the local partial $C^0 $-estimate.

\begin{prop}
\label{constructionlinebundleprop}
Under the above assumptions, there exists a sequence of integers $l_j \to\infty $, such that $\left( \linebundle_{l_j} , h_{l_j} \right) $ converges to a holomorphic line bundle $ \linebundle_{\infty } $ with a continuous Hermitian metric $h_\infty $ on $X_{reg } $. Moreover, for each $x\in X$, there exists an integer $D\in\mathbb{N}$, such that $\left( \linebundle_\infty^D ,h_\infty^D \right) $ converge to a Hermitian holomorphic line bundle on a neighborhood of $x\in X$.
\end{prop}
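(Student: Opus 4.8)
The plan is to build the limit Hermitian line bundle \emph{locally}, near every point of $X$, out of the peak holomorphic sections furnished by the local partial $C^0$-estimate (Proposition \ref{localpartialc0estimate}), and then to glue the local pieces along a single subsequence.

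\textbf{Step 1: rescaling and the peak section.} Fix $x\in X$ and $x_l\in M_l$ with $x_l\to x$. Since $(M_l,\omega_l,p_l)\to(X,d,p)$ with $\Ric(\omega_l)\geq-\Lambda\omega_l$ and $\Vol(B_1(p_l))\geq v$, Bishop-Gromov first gives $\Vol(B_1(x_l))\geq v'>0$ (with $v'$ depending only on $v,\Lambda,n,\dist(x,p)$) and then the \emph{scale-free} bound $\delta^{-2n}\Vol(B_\delta(x_l))\geq v''>0$ for all $0<\delta\leq1$; moreover, by Cheeger-Colding theory every tangent cone of $X$ is a metric cone. Run Proposition \ref{localpartialc0estimate} with a small $\xi>0$ and volume parameter $v''$ to obtain thresholds $\epsilon_0,K,C>0$; crucially $v''$ does not depend on the scale we are about to choose, so there is no circularity. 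Now pick a scale $\delta>0$ along a tangent-cone-defining sequence at $x$, small enough that $\Lambda\delta^2<\epsilon_0/2$ and that $(X,\delta^{-1}d,x)$ is within $\epsilon_0/4$ of a metric cone at scale $2\epsilon_0^{-1}$, and let $\lambda\in\mathbb N$ be the nearest integer to $\delta^{-2}$. The rescaled data $(M_l,\lambda\omega_l,\linebundle_l^{\lambda},h_l^{\lambda})$ is again polarized (since $\Ric(h_l^{\lambda})=2\pi\lambda\omega_l$), has $\Ric(\lambda\omega_l)\geq-\epsilon_0(\lambda\omega_l)$, has $\Vol_{\lambda\omega_l}(B_1^{\lambda\omega_l}(x_l))\geq v''$, and for all large $l$ has $B^{\lambda\omega_l}_{\epsilon_0^{-1}}(x_l)$ within $\epsilon_0$ of a metric cone ball. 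The hypotheses of Proposition \ref{localpartialc0estimate} are now met on $(M_l,\lambda\omega_l,\linebundle_l^{\lambda},h_l^{\lambda})$ with base point $x_l$, so it produces $s_l\in\hl(M_l,\linebundle_l^{D})$ for a suitable integer $D=D(x)$, a multiple of $\lambda$, with $\Vert s_l\Vert_{L^2}\leq C$ and, since $\xi$ was chosen small, with $\tfrac38\leq\Vert s_l\Vert_{h_l^{D}}^2\leq\tfrac32$ on a definite rescaled ball $B_{r_0}^{\lambda\omega_l}(x_l)$.

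\textbf{Step 2: convergence to a local Hermitian frame.} On $B_{r_0}^{\lambda\omega_l}(x_l)$, where $s_l\neq0$, the identity $\sqrt{-1}\,\partial\partialbar\log\Vert s_l\Vert^2_{h_l^{D}}=-2\pi K\lambda\omega_l$ shows that $\log\Vert s_l\Vert^2$ has bounded Laplacian; together with the uniform bound on $\log\Vert s_l\Vert^2$ from Step 1 and $\Ric(\lambda\omega_l)\geq-\epsilon_0$, the Cheng-Yau gradient estimate makes $\log\Vert s_l\Vert^2_{h_l^{D}}$ uniformly Lipschitz on a slightly smaller ball. Passing to a subsequence, $\log\Vert s_l\Vert^2$ converges uniformly there, and -- using that the complex structures of $M_l$ converge to that of $X$, the $L^2$-bound, and Montel's theorem in holomorphic charts -- the $s_l$ converge to a non-vanishing holomorphic section $s_\infty$ of a holomorphic line bundle on a neighbourhood $U_x$ of $x$, with continuous, strictly positive limit weight $\Vert s_\infty\Vert^2$. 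Given two such neighbourhoods, the ratio $t_l:=s_l/s'_l$ of the corresponding peak sections of a common power of $\linebundle_l$ is holomorphic, non-vanishing, and uniformly bounded away from $0$ and $\infty$ on the overlap, since $|t_l|=\Vert s_l\Vert/\Vert s'_l\Vert$ converges; so along a further subsequence $t_l\to t_\infty$, and since the $t_l$ obey the cocycle relation on triple overlaps, so does $t_\infty$. This exhibits $(\linebundle_l^{D},h_l^{D})$, along the subsequence, as converging to a Hermitian holomorphic line bundle on $U_x$.

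\textbf{Step 3: from $\linebundle_l^{D}$ to $\linebundle_l$ on $X_{reg}$, and globalisation.} If $x\in X_{reg}$ we may in addition shrink $U_x$ so that it lies in a holomorphic coordinate ball; then $\linebundle_l$ is trivial over the corresponding $U_{x,l}\subset M_l$, say with frame $e_l$, and $s_l=g_l\,e_l^{\otimes D}$ with $g_l$ holomorphic and non-vanishing. Since $U_{x,l}$ is biholomorphic to a ball, $g_l$ admits a holomorphic $D$-th root, so after replacing $e_l$ by $g_l^{1/D}e_l$ we may assume $e_l^{\otimes D}=s_l$; the transition functions and $h_l$-norms of these frames are then roots of the convergent data of Step 2, which is precisely the assertion that $(\linebundle_l,h_l)$ converges over $U_x$. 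As $X$ is proper, hence second countable, fix a countable family of neighbourhoods covering $X$ -- those centred in $X_{reg}$ being of the kind just described -- and diagonalise to one subsequence $l_j$ for which Step 2 holds on all of them. Gluing the frames over the members $U_x\subset X_{reg}$ via the convergent transition cocycles yields the Hermitian holomorphic line bundle $(\linebundle_\infty,h_\infty)$ on $X_{reg}$, with $h_\infty$ continuous. For an arbitrary $x\in X$, the member $U$ of the cover containing $x$ carries, by Step 2, the limit of $\linebundle_{l_j}^{D}$ with $D=D(U)$ as an honest line bundle trivialised by its peak section; comparing frames on $U\cap X_{reg}$ shows this limit restricts to $\linebundle_\infty^{D}$ there, so $(\linebundle_\infty^{D},h_\infty^{D})$ extends across $x$.

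\textbf{Expected main difficulty.} The technical crux is Step 1: one is forced to rescale by an \emph{integer} $\lambda$ -- so that $\linebundle_l^{\lambda}$ and the hypotheses of Proposition \ref{localpartialc0estimate} make sense -- while simultaneously driving the rescaled Ricci lower bound below the threshold $\epsilon_0$ and keeping the volume non-collapsed; the observation that the Bishop-Gromov bound $\delta^{-2n}\Vol(B_\delta)\geq v''$ is independent of $\delta$ is exactly what makes these demands compatible. The conceptual obstacle is in Step 3: producing $\linebundle_\infty$ itself, rather than merely a power, relies on extracting holomorphic $D$-th roots of non-vanishing functions, which works on coordinate balls in $X_{reg}$ but fails near $X_{sing}$; hence over the singular set only the power $\linebundle_\infty^{D}$, with $D$ depending on the point, is seen to extend.
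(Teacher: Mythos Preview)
Your proposal is correct and follows essentially the same strategy as the paper: apply the local partial $C^0$-estimate at a suitably rescaled level to obtain nonvanishing peak sections of a power $\linebundle_l^D$, extract holomorphic $D$-th roots on simply-connected coordinate balls in $X_{reg}$ to get frames of $\linebundle_l$ itself, and pass to a diagonal subsequence along which transition functions and Hermitian norms converge. The only notable difference is cosmetic: in Step~2 you invoke a Cheng--Yau gradient estimate on $\log\|s_l\|^2$ to force uniform convergence of the norms, whereas the paper leans directly on the pointwise bound $\big|\,\|s\|^2-e^{-m\pi d(\cdot,p)^2}\big|\le\xi$ already supplied by Proposition~\ref{localpartialc0estimate}, which controls the norm via the convergent distance function.
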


\begin{proof}
Since the sequence of complex structures is convergence, for each $y_\infty \in X_{reg} $, there exist a constant $r_{y,1} >0$ and a sequence of coordinates $z_l = \left( z_{l,1} ,\cdots ,z_{l,n} \right) : B_{r_{y,1} }  \left( y_{l} \right) \to \mathbb{C}^n $ around $ y_l $, such that $y_l \to y_\infty $ in the Gromov-Hausdorff sense, and the sequence of coordinates $z_l $ converge to a holomorphic coordinate $z_\infty = \left( z_{\infty ,1} ,\cdots ,z_{\infty ,n} \right) $ on $ B_{r_{y,1}}  \left( y_\infty \right) \subset X_{reg} $.

By the relative volume comparison and the Cheeger-Colding's ``almost volume cone implies almost metric cone" theorem, we can find a decreasing sequence of positive constants $\left\lbrace r_{y,k} \right\rbrace_{k=2}^{\infty } $ such that $r_{y,k} \leq \min \left\lbrace k^{-2} , r_{y,1} \right\rbrace  $, and there exists a sequence of metric cones $\left( V_k ,o_k \right) $ satisfying that $ d_{GH} \left( B_{r_{y,k}} \left( y_\infty \right) , B_{r_{y,k}} \left( o_k \right) \right) \leq k^{-1} r_{y,k} $. Now we can find constants $D_y ,l_y ,k_y >0$ and holomorphic sections $s_{k,y_l } \in H^0 \left( B_{r_{y,k_y} } \left( y_l \right) ,\linebundle^{D_y} \right)  ,$ $\forall l\geq l_y $ by the Liu-Sz\'ekelyhidi's local partial $C^0$-estimate, such that $2^{-1} \leq \left\Vert s_{k,y_l } \right\Vert \leq 2 $ on $B_{r_{y,k_y} } \left( y_l \right) $. Recall that $ B_{r_{y,k_y} } \left( y_\infty \right) $ is biholomorphic to an open domain in $\mathbb{C}^n $, then there exists a constant $r_y >0$ such that $B_{3 r_{y } } \left( y_\infty \right) $ is contractible in $ B_{r_{y,k_y} } \left( y_\infty \right) $. Hence we can apply the method of analytic continuation to construct a sequence of holomorphic sections $ s_{y_l } \in H^0 \left( B_{2 r_{y } } \left( y_l \right) ,\linebundle \right) $ such that $s^{D_y}_{y_l} = s_{k,y_l } $.

For each compact subset $K\subset X_{reg} $, we can find a finite subset of $\left\lbrace B_{r_y } (y) \right\rbrace_{y\in K } $ covering $K$, and the constants $r_y $ are the constants in the above argument. If $y'_{\infty} \in K $ such that $B_{r_{y'} } \left( y'_{\infty} \right) \cap B_{r_y} \left( y_\infty \right) \neq \emptyset  $, then we can use the above argument to find a sequence of points $y'_l \to y'_{\infty } $ and a sequence of holomorphic sections $s_{y'_l} \in H^0 \left( B_{2 r_{y' } } \left( y'_l \right) ,\linebundle \right) $ such that $2^{-1} \leq \Vert s_{ y'_l } \Vert \leq  2 $ on $B_{2r_{y'_l} } \left( y'_l \right) $. Then the modulus of the transition function $ | f_{y_l ,y'_l} | = \Vert s_{y_l} s^{-1}_{y'_l} \Vert \leq 4 $ on $B_{2r_{y'} } \left( y'_{l} \right) \cap B_{2r_y} \left( y_l \right) $. Taking a subsequence, we can assume that $f_{y_l ,y'_l} $ and $f_{y'_l ,y_l} $ converge to holomorphic functions on $B_{r_{y'} } \left( y'_{\infty} \right) \cap B_{r_y} \left( y_\infty \right)$, and the sequence of line bundles is convergence on $B_{r_{y'} } \left( y'_{\infty} \right) \cap B_{r_y} \left( y_\infty \right) $. Hence we can construct the limit $\left( \linebundle_\infty ,h_\infty \right) $ of a subsequence of the sequence of line bundles $\left( \linebundle_l ,h_l \right) $ on $K$ in a similar way. By choosing an exhaustive sequence of compact subsets of $X_{reg}$, we can find a sequence of Hermitian holomorphic line bundles $\left( \linebundle_{l_j} ,h_{l_j} \right) $ converges to a holomorphic line bundle $\left( \linebundle_\infty ,h_\infty \right) $ on $X_{reg}$. 

Now we consider the convergence of $\linebundle^D_l $ around the point $x\notin X_{reg}$. Without loss of generality, we can assume that $\left( \linebundle_l ,h_l \right) \to \left( \linebundle_\infty ,h_\infty \right) $ on $X_{reg}$. By a similar argument, we can find a sequence of points $x_l \to x $, two constants $r,D >0$, and a sequence of holomorphic sections $s_{ l } \in H^0 \left( B_{2r } \left( x_l \right) ,\linebundle_l^{D} \right)  ,$ such that $\left\Vert  s_l \right\Vert \in \left( 2^{-1} ,2 \right) $ on $ B_{2r } \left( x_l \right) $. Then we can construct the limit line bundle of the sequence $\left( \linebundle_{l_j}^D , h_{l_j}^D \right) $ on $B_r (x) \cup X_{reg} $ by taking subsequence if necessary.
\end{proof}

\begin{rmk}
In the proof of Proposition \ref{constructionlinebundleprop}, the completeness of $M_l $ is only used when applying the partial $C^0 $-estimate. Actually, we can make such constructions on Stein manifolds.
\end{rmk}

The following lemma gives the construction of the K\"ahler space structure on the limit space.

\begin{lmm}
\label{lmmexistkahlerspace}
Let $\left( M_l ,\omega_l ,p_l \right) $ be a sequence of pointed K\"ahler manifolds satisfying that $\Ric \left( \omega_l \right) \geq -\Lambda \omega_l $ and $\Vol \left( B_1 \left( p_l \right) \right) \geq v $, $\forall l\in\mathbb{N} $, where $\Lambda ,v>0$ are constants. Suppose that the compact balls $\bar{B}_{2} \left( p_l \right) $ converge to a metric space $(X,d,p)$ in the Gromov-Hausdorff sense, and the complex structures on $B_1 \left( p_l \right) $ converge to a complex manifold structure on $B_1 \left( p \right) $.  

Then there exists a sequence of integers $l_j \to\infty $, such that the K\"ahler potentials on $B_1 \left( p_{l_j} \right) $ converge to a K\"ahler space structure on $B_1 \left( p \right) $.
\end{lmm}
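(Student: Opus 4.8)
The plan is to produce the Kähler space structure on $B_1(p)$ out of limits of \emph{local} Kähler potentials of the forms $\omega_l$; the two things that must be checked are that the local potentials admit a limit regular enough to serve as a potential, and that the limit potentials differ by pluriharmonic functions on overlaps. For each $x\in B_1(p)$ the convergence of complex structures furnishes $r_x>0$ (small, with $B_{2r_x}(x)\Subset B_1(p)$), base points $x_l\to x$, and holomorphic maps $F_l=(z_{l,1},\cdots,z_{l,n}):B_{2r_x}(x_l)\to\mathbb{C}^n$ converging uniformly to a biholomorphism $F$ onto a bounded domain $\Omega:=F(B_{2r_x}(x))\subset\mathbb{C}^n$; for $l$ large $F_l$ is a biholomorphism onto a domain $\Omega_l$ with $\Omega_l\to\Omega$. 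Shrinking $r_x$ I may take $\Omega_l$ contractible, so the closed positive $(1,1)$-form $\omega_l$ may be written $\omega_l=\tfrac{\sqrt{-1}}{2}\partial\partialbar\psi_{l,x}$ on $B_{2r_x}(x_l)$ with $\psi_{l,x}$ smooth and strictly plurisubharmonic, unique up to adding the real part of a holomorphic function; note $\Delta_{\omega_l}\psi_{l,x}=2n$, and that both this equation and the pluriharmonicity of a difference of two potentials are unaffected by the ambiguity.

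The first quantitative input is a uniform bound on the local mass of $\omega_l$ in these charts. Each $z_{l,j}$ is holomorphic and $\Ric(\omega_l)\ge-\Lambda\omega_l$, so by the Bochner formula $|\partial z_{l,j}|^2_{\omega_l}$ is a subsolution of $\Delta_{\omega_l}u\ge-C(n)\Lambda\,u$; the balls $B_{2r_x}(x_l)$ carry a lower Ricci bound and are non-collapsed (volume comparison from $\Vol(B_1(p_l))\ge v$), so Moser iteration runs with uniform constants and, together with a Caccioppoli inequality and the uniform bound $\sup|F_l|\le C$, gives $\sup_{B_{r_x}(x_l)}|\partial F_l|^2_{\omega_l}\le C(n,\Lambda,v,r_x)$. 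Hence $F_l^*\omega_{\mathrm{Euc}}\le C\,\omega_l$ on $B_{r_x}(x_l)$, where $\omega_{\mathrm{Euc}}$ is the standard form of $\mathbb{C}^n$; so the eigenvalues of $\omega_l$ relative to $F_l^*\omega_{\mathrm{Euc}}$ are $\ge C^{-1}$, each is therefore at most $C^{n-1}$ times their product $\omega_l^n/(F_l^*\omega_{\mathrm{Euc}})^n$, and together with $\Vol_{\omega_l}(B_{r_x}(x_l))\le V(n,\Lambda)$ this yields $\int_{B_{r_x}(x_l)}\omega_l\wedge(F_l^*\omega_{\mathrm{Euc}})^{n-1}\le C'$. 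Pushed forward by $F_l$, the $\omega_l$ have uniformly bounded mass on compact subsets of $\Omega$, so the Riesz measures of the local potentials $\psi_{l,x}$ are uniformly bounded there.

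Assume in addition the key uniform estimate: after pinning the ambiguity down (say $\psi_{l,x}(x_l)=0$ and $\partial\psi_{l,x}(x_l)=0$), the functions $\psi_{l,x}$ are uniformly bounded and equicontinuous on $B_{r_x/2}(x_l)$. Then the rest is routine: a subsequence converges, in $L^1_{loc}$ and locally uniformly (Arzel\`a-Ascoli along the Gromov-Hausdorff convergence), to a continuous plurisubharmonic $\psi_{\infty,x}$ on $B_{r_x/2}(x)$ with $\tfrac{\sqrt{-1}}{2}\partial\partialbar\psi_{\infty,x}=\lim_l\omega_l$; when $B_{r_x/2}(x)\cap B_{r_y/2}(y)\ne\emptyset$ the difference $\psi_{l,x}-\psi_{l,y}$ is pluriharmonic on the overlap for $l$ large (a difference of two potentials of the single form $\omega_l$), and this survives the limit, so $\psi_{\infty,x}-\psi_{\infty,y}$ is pluriharmonic; finally, since $B_1(p)$ is Lindel\"of I cover it by countably many balls $B_{r_{x_k}/2}(x_k)$ and diagonalize over $k$ to obtain a single sequence $l_j\to\infty$ along which all of the above holds, so $\{(B_{r_{x_k}/2}(x_k),\psi_{\infty,x_k})\}_k$ is the desired K\"ahler space structure and the K\"ahler potentials on $B_1(p_{l_j})$ converge to it.

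The real obstacle is that boxed-off uniform estimate: the chart gradient bound and the volume bound control only the \emph{Riesz measures} of the $\psi_{l,x}$, not the functions themselves — a plurisubharmonic function with bounded Riesz mass and bounded supremum can still be badly negative at a point — so fixing the ambiguity by a jet at $x_l$ does not by itself give the $C^0$ bound, and one must genuinely use the Ricci lower bound. In the polarized case of Theorem \ref{thmcontinuousbergmankernel} this is clean: at small scales $B_s(x_l)$ is almost a metric cone (Cheeger-Colding), so Liu-Sz\'ekelyhidi's local partial $C^0$-estimate (Proposition \ref{localpartialc0estimate}) provides a local section $\sigma_l$ of $\linebundle_l^{D}$ that is nonvanishing and of controlled norm on $B_{r_x/2}(x_l)$, and $-\tfrac{1}{D}\log\|\sigma_l\|^2_{h_l^{D}}$ is then a uniformly bounded, uniformly equicontinuous K\"ahler potential of $\omega_l$. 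Without a polarization one must instead exploit $\Delta_{\omega_l}\psi_{l,x}=2n$ together with the Cheeger-Colding elliptic and structure theory — in particular the constraint that $\Ric(\omega_l)\ge-\Lambda\omega_l$ imposes on $\log\det(\mathrm{Hess}_{\mathbb C}\psi_{l,x})$ — to exclude the degenerations above; this is the technical heart of the lemma.
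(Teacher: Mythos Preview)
Your plan is sound and you correctly isolate the crux as the uniform $C^0$ control of the local potentials; but you leave that estimate as an explicit assumption, and the gesture toward Cheeger--Colding elliptic theory and $\log\det(\mathrm{Hess}_{\mathbb C}\psi_{l,x})$ in your last paragraph is not an argument.

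The idea you are missing is one you almost wrote down yourself. Once \emph{any} local potential $\phi_l$ exists on the contractible chart $B_r(x_l)$, the trivial bundle $\mathbb{C}$ with Hermitian metric $e^{-\phi_l}$ (suitably normalized) has curvature $2\pi\omega_l$, so $(B_r(x_l),\omega_l,\mathbb{C},e^{-\phi_l})$ \emph{is} a polarized K\"ahler manifold. You described exactly this mechanism for ``the polarized case'' --- Liu--Sz\'ekelyhidi's local partial $C^0$-estimate (Proposition~\ref{localpartialc0estimate}) produces a nonvanishing section of controlled norm, whose log-norm is then a uniformly bounded, equicontinuous potential --- but failed to notice that every K\"ahler manifold is \emph{locally} polarized in this trivial way. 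The lemma therefore reduces directly to the machinery of Proposition~\ref{constructionlinebundleprop}; completeness is not an obstruction since the H\"ormander estimate runs on the Stein chart domains. The upshot is that one should not try to control an \emph{arbitrary} normalized potential --- which, as you correctly observe, the Riesz mass bound alone cannot do --- but rather \emph{produce} a good one via the partial $C^0$-estimate. Your Bochner--Moser paragraph is a correct computation but, with this trick in hand, an unnecessary detour.
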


\begin{proof}
Let $x\in B_1 \left( p \right) $. By the convergence of complex structures, we can find a constant $r>0 $, a sequence of points $p_l \to p$, and a sequence of holomorphic charts on $B_r \left( p_l \right) $ so that the charts converge to a holomorphic chart on $B_r \left( p \right) $. Combining the Poincar\'e lemma with the H\"ormander's $L^2$ estimate, we can conclude that there are K\"ahler potentials $\phi_l $ on $B_r \left( p_l \right) $ by shrinking the value of $r$ if necessary. See also \cite[Lemma 2.8]{dns1}. Then $\left( B_r \left( p_l \right) ,\omega_{l} ,\mathbb{C} ,\phi_l \right) $ is a sequence of polarized K\"ahler manifolds.

Analysis similar to that in the proof of Proposition \ref{constructionlinebundleprop} shows that we can construct the limit K\"ahler space structure on $B_r (x)$ by choosing a suitable sequence $l_j \to \infty $. Choosing a locally finite open covering $\left\lbrace B_{r_j} \left( x_j \right) \right\rbrace $ of $B_1 (p)$. By taking a subsequence if necessary, the same argument as above gives the construction of the K\"ahler space structure on $B_1 (p)$, and the proof is complete.
\end{proof}

Now we prove that the K\"ahler space structure obtained in the above lemma does not depend on the choices of subsequence.

\begin{prop}
\label{proplocaluniquekahlerspace}
Let $\lbrace ( U_j ,\psi_j ) \rbrace $ and $\lbrace ( \widehat{U}_k ,\widehat{\psi}_k ) \rbrace $ be K\"ahler space structures constructed in Lemma \ref{lmmexistkahlerspace} as the limits of K\"ahler potentials in the Gromov-Hausdorff sense. Then they are equivalent.
\end{prop}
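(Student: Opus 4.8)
The plan is to reduce the assertion to a purely local one and then pass to the limit in the elementary fact that two K\"ahler potentials for the same metric differ by a pluriharmonic function. Since the vanishing of $i\partial\partialbar u$ (in the sense of currents) for $u\in L^1_{loc}$ is a local condition, and any $L^1_{loc}$ function with vanishing $i\partial\partialbar$ agrees a.e.\ with a genuine pluriharmonic function, it suffices to show that for every $x\in U_j\cap\widehat U_k$ the difference $\psi_j-\widehat\psi_k$ is pluriharmonic on some neighbourhood of $x$. By the definition of convergence of K\"ahler potentials there are a radius $r>0$, points $x_l\to x$ and $\widehat x_l\to x$, and K\"ahler potentials $\psi_{l,x}$ on $B_r(x_l)\subset M_l$ (for $l$ in one subsequence $S_1$) and $\widehat\psi_{l,x}$ on $B_r(\widehat x_l)\subset M_l$ (for $l$ in another subsequence $S_2$) such that $\psi_{l,x}\to\psi_{\infty,x}$, $\widehat\psi_{l,x}\to\widehat\psi_{\infty,x}$, while $\psi_j-\psi_{\infty,x}$ and $\widehat\psi_k-\widehat\psi_{\infty,x}$ are pluriharmonic near $x$. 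So everything reduces to showing that $\psi_{\infty,x}-\widehat\psi_{\infty,x}$ is pluriharmonic near $x$.

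If $S_1\cap S_2$ is infinite this is immediate: for $l\in S_1\cap S_2$ large, $B_r(x_l)\cap B_r(\widehat x_l)\neq\emptyset$, and there $i\partial\partialbar(\psi_{l,x}-\widehat\psi_{l,x})=\omega_l-\omega_l=0$, so $\psi_{l,x}-\widehat\psi_{l,x}$ is pluriharmonic; since $i\partial\partialbar$ is continuous for the weak topology on currents and $\psi_{l,x}-\widehat\psi_{l,x}\to\psi_{\infty,x}-\widehat\psi_{\infty,x}$ there, the limit is pluriharmonic near $x$. In general, because $i\partial\partialbar\psi_{\infty,x}$ (resp.\ $i\partial\partialbar\widehat\psi_{\infty,x}$) is the weak limit of $\omega_l$ along $S_1$ (resp.\ $S_2$) on a small ball about $x$, the problem is equivalent to the statement that \emph{all} weak subsequential limits of the K\"ahler forms $\omega_l$ on a fixed small ball $B_\rho(x)$ coincide --- that is, that $\omega_l$ converges weakly as currents near $x$.

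The heart of the matter is therefore exactly this rigidity, and it is the only non-formal ingredient. To establish it I would work at a scale $\rho$ so small that, uniformly in $l$, the ball $B_\rho(x_l)$ is Gromov--Hausdorff close to a Euclidean ball (the tangent cone of the limit at $x$ being $\mathbb C^n$), construct K\"ahler potentials $\phi_{l,x}$ for $\omega_l$ on $B_\rho(x_l)$ --- for all large $l$, not merely along a subsequence --- by combining the Poincar\'e lemma with H\"ormander's $L^2$ estimate (Proposition \ref{l2m}) applied to the local trivial polarization $(B_\rho(x_l),\omega_l,\mathbb C,\phi_{l,x})$, and use the local partial $C^0$ estimate (Proposition \ref{localpartialc0estimate}) together with a gradient estimate for holomorphic sections --- exactly as in the proofs of Lemma \ref{lmmexistkahlerspace} and Proposition \ref{constructionlinebundleprop} --- to obtain, after normalisation, a locally uniformly bounded, locally equicontinuous family $\{\phi_{l,x}\}$. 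Extracting a convergent subsequence inside $S_1$ identifies $\psi_{\infty,x}$ with the resulting limit modulo a pluriharmonic function (the differences $\psi_{l,x}-\phi_{l,x}$ being pluriharmonic on $M_l$), and likewise inside $S_2$ for $\widehat\psi_{\infty,x}$; the fact that these two limits of $\{\phi_{l,x}\}$ themselves differ only by a pluriharmonic function --- equivalently, the weak convergence of $\omega_l$ --- then follows from the uniqueness of the Gromov--Hausdorff limit $(X,d)$ and the convergence of the complex structures $J_l$, which between them determine the limiting K\"ahler current. With this in hand $\psi_{\infty,x}-\widehat\psi_{\infty,x}$, and hence $\psi_j-\widehat\psi_k$, is pluriharmonic near $x$; since $x\in U_j\cap\widehat U_k$ was arbitrary and pluriharmonicity is local, $\psi_j-\widehat\psi_k$ is pluriharmonic on all of $U_j\cap\widehat U_k$, which is precisely the equivalence to be proved.
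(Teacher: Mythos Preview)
Your reduction is correct and the overall architecture is right: the content of the proposition is exactly that the K\"ahler forms $\omega_l$ converge weakly as currents on $B_\rho(x)$, so that different subsequential limits of the potentials agree up to a pluriharmonic correction. The gap is in the last step, where you assert that weak convergence of $\omega_l$ ``follows from the uniqueness of the Gromov--Hausdorff limit $(X,d)$ and the convergence of the complex structures $J_l$, which between them determine the limiting K\"ahler current.'' That sentence is precisely what is to be proved. On a smooth K\"ahler manifold one has $\omega=g(J\cdot,\cdot)$ with $g$ recoverable from $d$, but here only the distance functions converge in the Gromov--Hausdorff sense; the metric tensors $g_l$ are not known to converge in any topology that would let you pass $\omega_l=g_l(J_l\cdot,\cdot)$ to the limit. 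Equicontinuity of the family $\{\phi_{l,x}\}$ gives convergent subsequences, but does not by itself rule out several subsequential limits whose $\sqrt{-1}\partial\partialbar$ differ.

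The paper supplies exactly the missing quantitative mechanism. At every point $y$ where the ball $B_\delta(y)$ is $\Psi(\epsilon)$--close to a Euclidean ball, the local partial $C^0$--estimate produces potentials $\psi_{y,\delta,\epsilon}$ (along $S_1$) and $\widehat\psi_{y,\delta,\epsilon}$ (along $S_2$) that are \emph{both} within $\Psi(\epsilon)\delta^2$ of the same function $\tfrac{1}{2}d_y^2$; hence $|\psi_{y,\delta,\epsilon}-\widehat\psi_{y,\delta,\epsilon}|\le\Psi(\epsilon)\delta^2$. One then pairs $\partial\partialbar(\psi-\widehat\psi)$ against a test form $\alpha$ via a partition of unity at scale $\delta$: the cutoffs have $|\partial\partialbar\eta|\lesssim\delta^{-2}$, which exactly cancels the $\delta^2$ in the potential difference, leaving a term of size $\Psi(\epsilon)$ on the good part and a term controlled by the Cheeger--Jiang--Naber Minkowski estimate (Theorem~\ref{jcwsjan1thm17coro}) on the bad part. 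Letting $\epsilon\to0$ yields $\int\partial\partialbar(\psi-\widehat\psi)\wedge\alpha=0$. In short, the common reference function $\tfrac{1}{2}d_y^2$---the same for both subsequences because the Gromov--Hausdorff limit is unique---is what makes the argument work, but this has to be implemented scale by scale with explicit bounds, not invoked as a principle.
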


\begin{proof}
Without loss of generality, we can assume that the two K\"ahler space structures $\lbrace ( U_j ,\psi_j ) \rbrace $ and $\lbrace ( \widehat{U}_k ,\widehat{\psi}_k ) \rbrace $ are the limits of K\"ahler potentials on the sequences $M_{l_a}$ and $M_{l_b}$, respectively.

For each $x\in B_1 (p) $, we can find a constant $r>0$, sequences of points $x_{l_a} , x_{l_b } \to x $, and sequences of K\"ahler potentials and coordinates on $B_r \left( x_{l_a} \right) $, $B_r \left( x_{l_b} \right) $, so that the coordinates converge to a coordinate on $B_r (x)$, and the K\"ahler potentials converge to $\psi $ and $\widehat{\psi}$ on $B_r (x)$, respectively. So we can use these charts to identify the balls $B_{r'} \left( x_{l_a} \right) $ and $B_{r'} \left( x_{l_b} \right) $ with corresponding subsets of $B_r (x)$ for radius $r'<r$ and sufficiently large constants $l_a$, $l_b$. It is sufficient to show that $\psi -\widehat{\psi} $ is pluriharmonic on $B_r (x) $.

We will denote by $\Sigma_\epsilon $ the set of points $y\in X$ satisfying that $\lim_{t\to 0^+ } t^{-2n} \Vol \left( B_t (y) \right) \leq \V_{2n } -\epsilon $, $\forall \epsilon >0$, where $\V_{2n } $ is the volume of the unit ball in $\mathbb{C}^n $. By the relative volume comparison estimate, $\Sigma_\epsilon $ are closed subsets in $X$. Let us first show that $\psi -\widehat{\psi} $ is pluriharmonic on $B_r (x) \sq \Sigma_\epsilon $. 

Now we consider a smooth $(n-1,n-1)$-form $\alpha $ with compact support in $B_r (x) \sq \Sigma_\epsilon $. Our goal is to prove that $ \int_{B_r (x)} \partial\partialbar ( \psi -\widehat{\psi} )  \wedge \alpha =0 $. Let $\rho < \frac{1}{10} \dist ( \mathrm{supp}(\alpha ) ,\Sigma_\epsilon \cup \partial B_{r} (x) ) $ be a positive constant, where $\mathrm{supp}(\alpha ) $ is the support of $\alpha $. Then we can find a constant $\tau \in (0, \rho ) $, such that $d_{GH} \left( B_\delta (y) , B^{\mathbb{C}^n }_\delta (0) \right) \leq \Psi (\epsilon |n,\Lambda ,v )  $, $ \forall \delta \in (0,\tau ) $, $\forall y\in B_{r-\rho } (x) \sq B_\rho \left( \Sigma_\epsilon \right) $. It follows that we can find plurisubharmonic functions $\psi_{y,\delta ,\epsilon } $ and $\widehat{\psi}_{y,\delta ,\epsilon } $ on $B_\delta (y)$, such that 
$$\left| \psi_{y,\delta ,\epsilon } -\frac{d_y^2}{2} \right| + \left| \widehat{\psi}_{y,\delta ,\epsilon } -\frac{d_y^2}{2} \right| \leq \Psi \left( \epsilon |n,\Lambda ,v \right)\delta^2 , \;\; \forall \delta \in (0,\tau ),\; \forall y\in B_{r-\rho } (x) \sq B_\rho \left( \Sigma_\epsilon \right) ,$$
and $\psi_{y,\delta ,\epsilon} -\psi $, $\widehat{\psi}_{y,\delta ,\epsilon} -\widehat{\psi} $ are pluriharmonic functions, where $d_y $ is the distance from $y$. Let $y_{l_a} \in M_{l_a} $, $y_{l_b} \in M_{l_b}$ be sequences converge to $y$ in the Gromov-Hausdorff sense. Then we can find sequences of local K\"ahler potentials $ \psi_{y,\delta ,\epsilon ,l_a} $ and $ \widehat{\psi}_{y,\delta ,\epsilon ,l_b} $ on $ B_\delta ( y_{l_a} ) $ and $ B_\delta ( y_{l_b} ) $, respectively, such that $ \psi_{y,\delta ,\epsilon ,l_a} \to \psi_{y,\delta ,\epsilon } $ and $\widehat{\psi}_{y,\delta ,\epsilon ,l_b} \to \widehat{\psi}_{y,\delta ,\epsilon } $ in the Gromov-Hausdorff sense.

Choose a cut-off function $\eta :\mathbb{R} \to \mathbb{R} $ such that $\eta (t)=1 $ for $t\leq \frac{1}{2} $, $\eta (t)=0 $ for $t\geq 1 $, $-5\leq -\eta' \leq 0 $ and $\left| \eta'' \right| \leq 10$. Let $ \eta_{y,\delta ,\epsilon , l_a} (u) = \eta \left( 2\delta^{-2} \psi_{y,\delta ,\epsilon ,l_a} (u) \right) $. It is clear that $ \eta_{y,\delta ,\epsilon , l_a} \in C^{\infty}_{c} \left( B_{\delta } (y) \right) $. Note that we identify the ball $B_{r-\rho } \left( x_{l_a} \right) $ with an open subset of $B_r (x)$ for sufficiently large $l_a$. By the gradient estimate, $\Delta \psi_{y,\delta ,\epsilon ,l_a} =n $ now shows that $\left| \nabla \eta_{y,\delta ,\epsilon ,l_a} \right|^2 +\left| \Delta \eta_{y,\delta ,\epsilon ,l_a} \right| \leq C\delta^{-2} $ on $M_{l_a}$, where $C =C( n,\Lambda ,v ) >0 $ is a constant. 

For simplicity of notation, we use the same latter $C$ for large constants depending only on $n,\Lambda ,v $.

Now we can choose positive constants $\epsilon' < \epsilon $, $ \rho' < \rho $ and $\tau'<\tau $, such that $d_{GH} \left( B_\delta (y) , B^{\mathbb{C}^n }_\delta (0) \right) \leq \Psi (\epsilon' |n,\Lambda ,v )  $, $ \forall \delta \in (0,\tau' ) $, $\forall y\in B_{r-\rho } (x) \sq B_{\rho'} \left( \Sigma_{\epsilon'} \right) $. Suppose that $\delta <\tau' $ from now. Let $ \left\lbrace y_i \right\rbrace $ be a discrete set of points in $B_{r-\rho } (x) \sq B_\rho \left( \Sigma_\epsilon \right)$ satisfying that the balls $B_{\frac{\delta}{40}} (y_i)$ are disjoint, but $B_{r-\rho } (x) \sq B_\rho \left( \Sigma_\epsilon \right) \subset \cup_{i} B_{\frac{\delta}{5}} (y_i) $. It is clear that $\sum_{i} \eta_{y_i,\delta ,\epsilon ,l_a} \geq 1 $ on $B_{r-\rho } (x) \sq B_\rho \left( \Sigma_\epsilon \right)  $. By the relative volume comparison, we see that for every point $y\in B_{r-\rho } (x) \sq B_\rho \left( \Sigma_\epsilon \right) $, there are at most $C$ functions $\eta_{y_i ,\delta ,\epsilon ,l_a} $ that are not $0$ at $y$. Set $\eta'_{y_i ,\delta ,\epsilon ,l_a} = \frac{ \eta_{y_i,\delta ,\epsilon ,l_a}}{\sum_{\beta} \eta_{y_\beta ,\delta ,\epsilon ,l_a}} $. Then we have 
$$\left| \eta'_{y_i ,\delta ,\epsilon ,l_a} \right|_{\omega_{l_a}} + \delta \left| \nabla \eta'_{y_i ,\delta ,\epsilon ,l_a} \right|_{\omega_{l_a}} +\left| \partial\partialbar \eta'_{y_i ,\delta ,\epsilon ,l_a} \right|_{\omega_{l_a}} \leq C $$ 
for sufficiently large $l_a $. Similarly, we see that 
$$\left| \alpha \right|_{\omega_{l_a}} +\left| \partial \alpha \right|_{\omega_{l_a}} +\left| \partialbar\alpha \right|_{\omega_{l_a}} +\left| \partial\partialbar \alpha \right|_{\omega_{l_a}} \leq C $$ 
for sufficiently large $l_a $. Now we begin to estimate the following integral
\begin{eqnarray*}
 \left| \int_{B_r (x)} \partial\partialbar ( \psi -\widehat{\psi } ) \wedge \alpha \right| & = & \left| \sum_i \int_{B_\delta \left( y_i \right) }  \eta'_{y_i ,\delta ,\epsilon ,l_a} \partial\partialbar ( \psi_{y_i ,\delta ,\epsilon} -\widehat{\psi }_{y_i ,\delta ,\epsilon} ) \wedge \alpha \right| \\
& \leq & \sum_i \int_{B_\delta \left( y_i \right) } \left| ( \psi_{y_i ,\delta ,\epsilon} -\widehat{\psi }_{y_i ,\delta ,\epsilon} ) \partial\partialbar \left( \eta'_{y_i ,\delta ,\epsilon ,l_a} \wedge \alpha \right) \right| .
\end{eqnarray*}
For each $i$, we see that
$$\partial\partialbar \left( \eta'_{y_i ,\delta ,\epsilon ,l_a} \wedge \alpha \right) = \partial\partialbar \eta'_{y_i ,\delta ,\epsilon ,l_a} \wedge \alpha + \eta'_{y_i ,\delta ,\epsilon ,l_a} \partial\partialbar \alpha + \partial \eta'_{y_i ,\delta ,\epsilon ,l_a} \wedge \partialbar \alpha + \partial \alpha \wedge \partialbar \eta'_{y_i ,\delta ,\epsilon ,l_a} . $$
It follows that
\begin{eqnarray*}
 \left| \int_{B_r (x)} \partial\partialbar ( \psi -\widehat{\psi } ) \wedge \alpha \right| & \leq & \sum_i \int_{B_\delta \left( y_i \right) } \left| ( \psi_{y_i ,\delta ,\epsilon} -\widehat{\psi }_{y_i ,\delta ,\epsilon} ) \partial\partialbar \eta'_{y_i ,\delta ,\epsilon ,l_a} \wedge \alpha \right| \\
& & + \sum_i \int_{B_\delta \left( y_i \right) } C\delta \left| ( \psi_{y_i ,\delta ,\epsilon} -\widehat{\psi }_{y_i ,\delta ,\epsilon} )   \right| \\
& \leq & \sum_i \left( \int_{B_\delta \left( y_i \right) } \left| ( \psi_{y_i ,\delta ,\epsilon} -\widehat{\psi }_{y_i ,\delta ,\epsilon} ) \partial\partialbar \eta'_{y_i ,\delta ,\epsilon ,l_a} \wedge \alpha \right| +C\delta \right) .
\end{eqnarray*}
By the above, we have
$$ \int_{B_\delta \left( y_i \right) } \left| ( \psi_{y_i ,\delta ,\epsilon} -\widehat{\psi }_{y_i ,\delta ,\epsilon} ) \partial\partialbar \eta'_{y_i ,\delta ,\epsilon ,l_a} \wedge \alpha \right| \leq \Psi (\epsilon |n,\Lambda ,v) \delta^{2n} ,$$
for each $i$ and sufficiently large $l_a $. Moreover, when $y_i \in B_{r-\rho } \sq B_{\rho'} \left( \Sigma_{\epsilon'} \right) $, we have
$$ \int_{B_\delta \left( y_i \right) } \left| ( \psi_{y_i ,\delta ,\epsilon} -\widehat{\psi }_{y_i ,\delta ,\epsilon} ) \partial\partialbar \eta'_{y_i ,\delta ,\epsilon ,l_a} \wedge \alpha \right| \leq \Psi (\epsilon' |n,\Lambda ,v) \delta^{2n} .$$
By Cheeger-Jiang-Naber's estimate, we have $\Vol \left( B_{\rho'} \left( \Sigma_{\epsilon'} \right) \right) \leq C' \rho'^{2} $, where $C' = C' (n,\Lambda ,v,\epsilon' ) >0 $ is a constant. Hence we have $\# \left\lbrace i \big| y_i \in B_{\rho'} \left( \Sigma_{\epsilon'} \right) \right\rbrace  \leq C C' \rho'^{2} \delta^{-2n} .$ It follows that
\begin{eqnarray*}
 \left| \int_{B_r (x)} \partial\partialbar ( \psi -\widehat{\psi } ) \wedge \alpha \right| & \leq & \sum_{y_i \in B_{\rho'} \left( \Sigma_{\epsilon'} \right) } C \delta^{2n} + \sum_{y_i \notin B_{\rho'} \left( \Sigma_{\epsilon'} \right) } \Psi (\epsilon' |n,\Lambda ,v) \delta^{2n} \\
& \leq & C C' \rho'^2 + C \Psi (\epsilon' |n,\Lambda ,v) .
\end{eqnarray*}
Note that $\{ y_i \}_i \subset B_{r-\rho } (x) $. Choosing a sufficiently small constant $\rho' > 0$, and then letting $\epsilon' \to 0 $. Hence we can conclude that $\int_{B_r (x)} \partial\partialbar ( \psi -\widehat{\psi } ) \wedge \alpha =0 $. Now we see that the function $ \psi -\widehat{\psi } $ is pluriharmonic on $B_r (x) \sq \Sigma_\epsilon $.

Recall that $\Sigma_\epsilon $ is contained in a finite union of analytic subvarieties of $B_r (x) $ \cite[Theorem 4.1]{lggs1}. Since $\psi -\widehat{\psi} $ is continuous, it is pluriharmonic on $B_r (x)$. See Demailly \cite[Theorem 5.24]{dm1}. This completes the proof.
\end{proof}

We are now ready to prove Theorem \ref{thmuniquenesskahlerpotential}.

\vspace{0.2cm}

\noindent \textbf{Proof of Theorem \ref{thmuniquenesskahlerpotential}: }
The existence and uniqueness of the K\"ahler space structure on $X_{reg}$ have essentially been proved in Lemma \ref{lmmexistkahlerspace} and Proposition \ref{proplocaluniquekahlerspace} locally. The global case is an immediate consequence.

Note that $M_l$ are polarized. In this case, Proposition \ref{constructionlinebundleprop} shows that the K\"ahler space structure of $X_{reg}$ can be extended to $X$, and Proposition \ref{proplocaluniquekahlerspace} implies that the restriction of K\"ahler space structure on $X_{reg}$ is unique. Let $\lbrace ( U_j ,\psi_j ) \rbrace $ and $\lbrace ( \widehat{U}_k ,\widehat{\psi}_k ) \rbrace $ be two K\"ahler space structures on $X$, and $ \psi_j - \widehat{\psi }_k $ is pluriharmonic on $U_j \cap \widehat{U}_k \cap X_{reg} $ if $U_j \cap \widehat{U}_k \neq \emptyset $. Since $X$ is a normal space \cite[Corollary II-7.8]{dm1}, it is locally irreducible. Then the continuity of $ \psi_j - \widehat{\psi }_k $ implies that $ \psi_j - \widehat{\psi }_k $ is pluriharmonic on $U_j \cap \widehat{U}_k $ \cite[Theorem 1.7]{dm2}. Hence $\lbrace ( U_j ,\psi_j ) \rbrace $ and $\lbrace ( \widehat{U}_k ,\widehat{\psi}_k ) \rbrace $ are equivalent.
\qed

\vspace{0.2cm}

As a corollary, we have the following uniqueness property of limit Hermitian metric on a given limit line bundle.

\begin{coro}
\label{corouniquehermitianmetric}
Let $\left( \linebundle_\infty , h_\infty \right) $ and $\left( \linebundle_\infty , h'_\infty \right)$ be two limit line bundles as in Proposition \ref{constructionlinebundleprop}. Then there exists a pluriharmonic function $\psi $ on $X$, such that $h'_\infty =e^\psi h_\infty $.

Moreover, if $X$ is compact or $H_1 (X ;\mathbb{C} ) \cong 0 $, we have $\left( \linebundle_\infty , h_\infty \right) \cong \left( \linebundle_\infty , h'_\infty \right)$.
\end{coro}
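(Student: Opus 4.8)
\textbf{Proof proposal for Corollary \ref{corouniquehermitianmetric}.}

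The plan is to extract a global pluriharmonic function measuring the discrepancy between the two Hermitian metrics, using the uniqueness of the limit K\"ahler space structure established in Theorem \ref{thmuniquenesskahlerpotential}. First I would recall that, by Proposition \ref{constructionlinebundleprop}, both $h_\infty$ and $h'_\infty$ are limits (along suitable subsequences) of the Hermitian metrics $h_l$ on $\linebundle_l$, together with convergence of local frames and transition functions. Fix a local frame $e_j$ of $\linebundle_\infty$ on some $U_j$ coming from frames $e_{j,l}$ on $M_l$. Then $\varphi_j := -\log \lVert e_j \rVert_{h_\infty}^2$ and $\varphi'_j := -\log \lVert e_j \rVert_{h'_\infty}^2$ are local K\"ahler potentials (up to the factor $2\pi$): since $\Ric(h_l) = 2\pi \omega_l$, the local functions $\frac{1}{2\pi}\varphi_{j,l} = -\frac{1}{2\pi}\log\lVert e_{j,l}\rVert_{h_l}^2$ satisfy $\frac{\sqrt{-1}}{2\pi}\partial\partialbar\varphi_{j,l} = \omega_l$, so $\{(U_j, \tfrac{1}{2\pi}\varphi_j)\}$ and $\{(U_j, \tfrac{1}{2\pi}\varphi'_j)\}$ are two K\"ahler space structures on $X$ arising as Gromov-Hausdorff limits of K\"ahler potentials, in the sense made precise in Section \ref{limitlinebundlesection}. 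Moreover on overlaps $U_j \cap U_k$ the transition functions agree (they are the common $f_{j,k,\infty}$), so $\varphi_j - \varphi_k = \varphi'_j - \varphi'_k = \log|f_{j,k,\infty}|^2$ there; hence $\psi_j := \varphi'_j - \varphi_j$ glues to a globally well-defined continuous function $\psi$ on $X$.

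The key step is then to invoke Theorem \ref{thmuniquenesskahlerpotential}: the two K\"ahler space structures $\{(U_j, \tfrac{1}{2\pi}\varphi_j)\}$ and $\{(U_j, \tfrac{1}{2\pi}\varphi'_j)\}$ are both limits of K\"ahler potentials along the same Gromov-Hausdorff convergence, hence they are equivalent in the sense of the definition in Section \ref{limitlinebundlesection}. Equivalence means precisely that $\tfrac{1}{2\pi}(\varphi'_j - \varphi_j)$ is pluriharmonic on each $U_j$; that is, $\psi|_{U_j}$ is pluriharmonic. Since this holds on every chart of the covering, $\psi$ is a global pluriharmonic function on $X$, and by construction $\lVert \cdot \rVert_{h'_\infty}^2 = e^{-\psi}\lVert \cdot \rVert_{h_\infty}^2$; writing $h'_\infty = e^{\psi} h_\infty$ after the obvious sign bookkeeping (replacing $\psi$ by $-\psi$ if needed) gives the first assertion. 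One small point to check carefully is that $\psi$ is globally \emph{single-valued} and not merely locally defined up to constants: this is automatic because the $\varphi_j$ and $\varphi'_j$ are honest functions on $U_j$ (values of $-\log\lVert e_j\rVert^2$ for the \emph{same} frame $e_j$), so $\psi_j$ and $\psi_k$ literally coincide on $U_j \cap U_k$.

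For the moreover part, suppose $X$ is compact or $H_1(X;\mathbb{C}) \cong 0$. If $X$ is compact, a global pluriharmonic function on the normal (hence connected, say) complex space $X$ is constant by the maximum principle for plurisubharmonic functions, so $h'_\infty = e^c h_\infty$ for a constant $c$, and rescaling the identity map $\linebundle_\infty \to \linebundle_\infty$ by $e^{-c/2}$ gives an isometry $(\linebundle_\infty, h_\infty) \cong (\linebundle_\infty, h'_\infty)$. If $H_1(X;\mathbb{C}) \cong 0$, then a pluriharmonic function $\psi$ admits (locally) a pluriharmonic conjugate, so $\partial\psi$ is a closed holomorphic $1$-form; the vanishing of $H_1(X;\mathbb{C})$ forces $\psi = \re(F) $ (up to an additive constant) for a global holomorphic function $F$ on $X$ — here one uses that on the normal space $X$ one still has the exact sequence relating $\mathbb{C}$, $\mathscr{O}_X$ and $d\mathscr{O}_X$ in low degrees, the obstruction living in $H^1(X;\mathbb{C})$. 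Then $e^{F/2}$ is a nowhere-vanishing holomorphic function and multiplication by it is a holomorphic bundle automorphism of $\linebundle_\infty$ intertwining $h_\infty$ and $h'_\infty$. The main obstacle I anticipate is precisely this last cohomological step on a \emph{singular} normal space: one must make sure that ``pluriharmonic with vanishing $H_1$ implies real part of a global holomorphic function'' still holds there, which should follow from normality (so that $\mathscr{O}_X = $ the sheaf of weakly holomorphic functions and local pluriharmonic functions are real parts of local holomorphic ones, cf.\ \cite[Theorem 1.7]{dm2}) together with the standard sheaf-cohomology exact sequence; everything else is a routine maximum-principle and bundle-automorphism argument.
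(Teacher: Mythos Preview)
Your argument for the first assertion and for the compact case is essentially the paper's own proof: local K\"ahler potentials coming from the two metrics, uniqueness of the limit K\"ahler space structure (Theorem \ref{thmuniquenesskahlerpotential} / Proposition \ref{proplocaluniquekahlerspace}) to make the difference pluriharmonic on $X_{reg}$, gluing via common transition functions, and then extension across $X_{sing}$ by continuity and normality. The maximum-principle argument in the compact case is identical.

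The genuine difference is in the $H_1(X;\mathbb{C})\cong 0$ step, and it is exactly at the point you flag as the main obstacle. You propose a sheaf-cohomology route (local pluriharmonic $=$ real part of local holomorphic, then the obstruction to globalizing lives in $H^1(X;\mathbb{C})$), invoking \cite[Theorem 1.7]{dm2} for the local statement. The paper does not appeal to a general local result on normal spaces; instead it proves the local statement \emph{by hand} at each $x\in X_{sing}$: it takes a resolution $\pi_U:\widetilde{U}\to U$, pulls back $\psi$, and shows that the analytic continuation of a local primitive of $-\sqrt{-1}\partial\psi+\sqrt{-1}\partialbar\psi$ has no monodromy on a neighborhood of the compact fiber $\pi_U^{-1}(x)$ (using that $\widetilde\psi$ is constant there and a van Kampen--type covering argument), then pushes the resulting holomorphic function back down through normality. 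Only after this local step does the paper globalize by analytic continuation, using that $H_1(X;\mathbb{C})=0$ forces $H_1(X;\mathbb{Z})$ to be torsion and hence every homomorphism $\pi_1(X)\to\mathbb{C}$ to vanish. Your sheaf-theoretic outline is morally equivalent and would likely succeed, but the paper's explicit desingularization argument is precisely what fills the gap you left open; if you want your version to be complete you must either supply that local argument at singular points or pin down a precise reference valid on normal complex spaces.
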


\begin{proof}
Let $ \left\lbrace U_i \right\rbrace $ be an open covering of $X_{reg}$ such that the restriction of $ \linebundle_\infty $ on $U_i $ are trivial bundles. Then we can choose local frames $e_i $ of $ \linebundle_\infty $ on $U_i $. Recall that $-\frac{\sqrt{-1}}{2\pi } \log \left( h_\infty \right) $ and $-\frac{\sqrt{-1}}{2\pi } \log \left( h'_\infty \right) $ are limits of K\"ahler potentials locally. Hence we have a family of pluriharmonic functions $\psi_i $ on $U_i $, such that $h'_\infty ( e_i ) = e^{\psi_i } h_\infty ( e_i ) $. Since $h_\infty $ and $h'_\infty $ are Hermitian metrics on $ \linebundle_\infty $, we can conclude that $\psi_i = \psi_{i'} $ on $U_i \cap U_{i'} $ on each $U_i \cap U_{i'} \neq \emptyset $. It follows that $ \left\lbrace \psi_i \right\rbrace $ gives a pluriharmonic function $\psi $ on $X_{reg}$. By the construction in Proposition \ref{constructionlinebundleprop}, we see that $\psi $ can be extended to a continuous function on $X$, and hence $\psi $ can be extended to a pluriharmonic function on $X$. Note that $X$ is a normal complex space. Then we have $h'_\infty =e^\psi h_\infty $.

It is sufficient to prove that $\psi $ is the real part of a certain holomorphic function on $X$ when $X$ is compact or $H_1 (X ;\mathbb{C} ) \cong 0 $ now. If $X$ is compact, then the strong maximal principle shows that $\psi $ is constant, and hence we have $\left( \linebundle_\infty , h_\infty \right) \cong \left( \linebundle_\infty , h'_\infty \right)$ in this case.

Let $x\in X$. Our task now is to find a holomorphic function $\varphi_x $ around $x $ such that $ \mathrm{Re} ( \varphi_x ) =\psi $. Applying the resolution of singularities to $X$ \cite{hironaka1, bm1}, we can find an open neighborhood $U$ of $x$, a complex manifold $\widetilde{U} $, and a proper holomorphic surjective mapping $\pi_U : \widetilde{U} \to U $, such that the restriction of $\pi_U $ on $ \widetilde{U} \sq D $ gives a biholomorphic map $ \widetilde{U} \sq D \to U\cap X_{reg} $, where $D = \pi^{-1}_U (U\cap X_{sing} )  $ is the inverse image of $U\cap X_{sing} $. Set $\widetilde{\psi } = \psi \circ \pi_U $. Then $\widetilde{\psi } $ is pluriharmonic on $\widetilde{U} $. Clearly, $\widetilde{U} $ is pathwise connected. By the method of analytic continuation, we only need to show that the end value $\phi_\gamma (1 ) $ of the solution $\phi_\gamma (t ) $ of the equation $d\phi =-\sqrt{-1} \partial \psi  +\sqrt{-1} \partialbar \psi $, $\phi (\gamma (0) ) =0 $ along piecewise differentiable curves $\gamma : [0,1 ] \to \widetilde{U} $ depends only on the endpoint $\gamma (1 )$. Note that we can shrink the open neighborhood $U$ if necessary. It is easy to check that $\phi_\gamma (1 ) $ is unchanged under deformations of the path, so we can define $\phi_\gamma (1 ) $ for continuous curves.

Consider $A= \pi_U^{-1 } (x) $. Clearly, $A$ is a compact subvariety of $\widetilde{U} $, and hence $\widetilde{\psi } $ is constant on $A$. For each $y\in A$, we can find a small open neighborhood $V_y $ of $y$ such that $A\cap V_y $ is connected. Without loss of generality, we can assume that $V_y $ is contained in an open subset of $\widetilde{U} $ that is biholomorphic to the unit ball in $\mathbb{C}^n $. It follows that for each curve $\gamma_1 $ in $V_y $ connected two points in $A\cap V_y $, we can find a fixed endpoint homotopy between $\gamma_1 $ with a curve in $A$. Choosing a finite open covering $\{ V_{j } \}_j $ of $A$, satisfying that $V_j \subset V_{y_j} $ for some $y_j \in A$, and $V_j \cap V_k $ are connected.

Let $V=\bigcup_j V_j $. Construction similar to that in the proof of van Kampen's theorem \cite{aha1} now shows that any loop in $V$ can be homotopy to a loop in $A$. Since $\widetilde{\psi } $ is constant on $A$, we see that $\phi_\gamma (1 ) =0 $ for each curve $\gamma : [0,1 ] \to A $. It follows that $\phi_\gamma (1 ) =0 $ depends only on the endpoint $\gamma (1 )$, and hence there exists a holomorphic function $\varphi_A $ on $V$, such that $ \mathrm{Re} ( \varphi_A ) =\widetilde{\psi } $. Since $\pi_U $ is proper, we can find an open neighborhood $U' \subset U $ of $x$ such that $\pi^{-1}_U (U' ) \subset V $. Then $ \varphi_x = \varphi_A \circ \pi_U^{-1}$ gives a holomorphic function on $U'\cap X_{reg} $ satisfying $ \mathrm{Re} ( \varphi_x ) = \psi  $. Since $X$ is normal, $\varphi_x $ can be extended to a holomorphic function on $U' $. By the universal coefficient theorem, $H_1 (X ;\mathbb{C} ) \cong 0 $ if and only if $H_1 (X ;\mathbb{Z} ) $ is a torsion group. Hence all homomorphism $\pi_1 (X) \to\mathbb{C} $ must be trivial. Then we can construct a holomorphic function $\varphi $ on $X $ such that $ \mathrm{Re} ( \varphi ) =\psi $ by the method of analytic continuation again. This is our claim.
\end{proof}

\begin{rmk}
Let $X\cong \mathbb{D}^* $ be the punctured disc. Then $H_1 (X ;\mathbb{C} ) \cong \mathbb{C} $, and $\log |z| $ is pluriharmonic on $X$. But there is no holomorphic function $\varphi $ on $X $ such that $ \mathrm{Re} ( \varphi ) =\log |z| $.
\end{rmk}

\section{Convergence of Bergman kernels }
\label{convergencebergmankernelsection}

Let $\left( M_l ,\omega_l ,p_l \right) $ be a sequence of pointed complete polarized K\"ahler manifolds satisfying that $\Ric \left( \omega_l \right) \geq -\Lambda \omega_l $ and $\Vol \left( B_1 \left( p_l \right) \right) \geq v $, $\forall l\in\mathbb{N} $, where $\Lambda ,v>0$ are constants. Suppose that $ \left( M_l ,\omega_l ,p_l \right) \to \left( X ,d ,p \right) $ in the pointed Gromov-Hausdorff topology, and the normal complex space structure of $X$ is the limit of the sequence of complex structures of $M_l$. Then the upper semicontinuity of Bergman kernels follows from the definition.

\begin{lmm}
\label{lmmuppercontinuitybergman}
Let $(\linebundle_l ,h_l) $ be a sequence of Hermitian holomorphic line bundles $(\linebundle_l ,h_l)$ on $M_l$. Assume that the line bundles $(\linebundle_l ,h_l) $ converge to a Hermitian line bundle $(\linebundle_\infty ,h_\infty ) $ on $X_{reg}$. Then for each sequence of points $x_l \to x_{\infty } \in X_{reg} $, we have
$$ \limsup_{l\to\infty } \rho_{\linebundle_l , h_l ,1 } (x_l ) \leq \rho_{\linebundle_\infty , h_\infty ,1 } (x_\infty ) . $$
\end{lmm}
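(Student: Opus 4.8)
The plan is to run the standard lower-semicontinuity argument: pick almost-extremal sections on the $M_l$, show they subconverge to a holomorphic section on $X_{reg}$, and compare pointwise values. By the usual subsequence trick it suffices to show that along any subsequence $\{l_k\}$ for which $\rho_{\linebundle_{l_k},h_{l_k},1}(x_{l_k})$ converges, the limit is at most $\rho_{\linebundle_\infty,h_\infty,1}(x_\infty)$; passing to a subsequence realizing the $\limsup$ then completes the proof. So fix such a subsequence and choose $s_{l_k}\in\hl(M_{l_k},\linebundle_{l_k})$ with $\|s_{l_k}\|_{L^2}=1$ and $\|s_{l_k}(x_{l_k})\|_{h_{l_k}}^2\geq\rho_{\linebundle_{l_k},h_{l_k},1}(x_{l_k})-\frac{1}{k}$; such near-maximizers exist once we know the supremum defining $\rho$ is finite, which is the content of the next step.

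The main point is a uniform mean-value estimate on compacta of $X_{reg}$. Cover a compact $K\subset X_{reg}$ by finitely many charts $U_j$ over which $\linebundle_\infty$, hence $\linebundle_l$ for $l$ large, is trivialized with frames $s_{j,l}$, and write a section $s$ as $f_{j,l}s_{j,l}$, so $\|s\|_{h_l}^2=|f_{j,l}|^2\|s_{j,l}\|_{h_l}^2$. Since the Hermitian metrics converge, $\|s_{j,l}\|_{h_l}$ converges uniformly on compacta of $U_j$ to the continuous, strictly positive function $\|s_{j,\infty}\|_{h_\infty}$, so for $l$ large it is pinched between two positive constants; this local boundedness of $\log\|s_{j,l}\|_{h_l}$ replaces a curvature hypothesis on $\linebundle_l$, which is why none is assumed. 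Because $|f_{j,l}|^2$ is plurisubharmonic, hence $\omega_l$-subharmonic, and because $\Ric(\omega_l)\geq-\Lambda\omega_l$ together with the non-collapsing makes the volumes of small balls around points of $K$ comparable to Euclidean ones uniformly in $l$, the mean-value inequality for subharmonic functions gives $\sup_K\|s\|_{h_l}^2\leq C_K\int\|s\|_{h_l}^2\,d\mu_l\leq C_K\|s\|_{L^2}^2$ with $C_K$ independent of $l$. Applied to $s_{l_k}$ this bounds $|f_{j,l_k}|$ uniformly on compacta, so by Montel's theorem and the assumed convergence of the transition functions $f_{j,k,l}$ we may pass to a further subsequence along which $s_{l_k}$ converges, locally uniformly on $X_{reg}$, to a holomorphic section $s_\infty$ of $\linebundle_\infty$; in particular $\|s_{l_k}(x_{l_k})\|_{h_{l_k}}^2\to\|s_\infty(x_\infty)\|_{h_\infty}^2$, and by construction this equals $\lim_k\rho_{\linebundle_{l_k},h_{l_k},1}(x_{l_k})$.

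It remains to bound the $L^2$-norm of $s_\infty$. For any compact $K\subset X_{reg}$, identified with the corresponding approximating subsets of the $M_{l_k}$ via the converging charts, the uniform convergence $\|s_{l_k}\|_{h_{l_k}}^2\to\|s_\infty\|_{h_\infty}^2$ on $K$ together with the convergence of the volume measures $\mu_{l_k}\to\mu$ (the $2n$-dimensional Hausdorff measure, by the volume convergence theorem) gives $\int_K\|s_\infty\|_{h_\infty}^2\,d\mu\leq\liminf_k\|s_{l_k}\|_{L^2}^2=1$; letting $K\uparrow X_{reg}$ shows $s_\infty\in\hl(X,\linebundle_\infty)$ with $\|s_\infty\|_{L^2}\leq 1$. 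Therefore $\lim_k\rho_{\linebundle_{l_k},h_{l_k},1}(x_{l_k})=\|s_\infty(x_\infty)\|_{h_\infty}^2\leq\rho_{\linebundle_\infty,h_\infty,1}(x_\infty)\,\|s_\infty\|_{L^2}^2\leq\rho_{\linebundle_\infty,h_\infty,1}(x_\infty)$, which is the claim.

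The only step that is not purely formal is the uniform mean-value estimate: one must carry it out in local trivializations so that the absence of a curvature bound on $\linebundle_l$ is compensated by the metric convergence, and one must know that the relevant balls do not collapse — both available in the present setting. Montel's theorem, Fatou's lemma, and the subsequence reduction for the $\limsup$ are routine.
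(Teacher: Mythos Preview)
Your proof is correct and follows essentially the same route as the paper: extract near-peak sections, obtain a uniform local mean-value bound in trivializations so that Montel's theorem yields a limit section on $X_{reg}$, and bound its $L^2$ norm via the volume convergence theorem. The paper is terser — it simply asserts subconvergence of the local coefficient functions without spelling out the subharmonicity and non-collapsing ingredients — but the skeleton of the argument is identical.
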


\begin{proof}
Let $B_{r_i} (y_i) $ be a locally finite covering of $X_{reg}$ satisfying that $B_{\frac{r_i}{100}} (y_i )$ is also a covering of $X_{reg}$, and for each $i$, there exist a sequence of points $y_{i,l} \to y_i $ and a sequence of charts $F_{i,l}$ on $B_{r_i} (y_{i,l}) \subset M_{l}$ for sufficiently large $l$, such that $F_{i,l}$ converge to a chart on $B_{r_i} (y_i) $. Since $(\linebundle_l ,h_l) $ converge to $(\linebundle_\infty ,h_\infty ) $, we can assume that there are a family of local frames $e_{i,l} \in H^0 \left( B_{r_i} (y_{i,l} ) ,\linebundle \right) $ and sections $ e_{i} \in H^0 \left( B_{r_i} (y_{i} ) ,\linebundle \right) $ such that the norms $ \left\Vert e_{i,l} \right\Vert $ and transition functions $f_{i,j,l} = e_{i,l} e^{-1}_{j,l}$ converge to the norms $ \left\Vert e_{i} \right\Vert $ and transition functions $f_{i,j} = e_{i} e^{-1}_{j}$, respectively.

Without loss of generality, we can assume that $ \limsup_{l\to\infty } \rho_{\linebundle_l , h_l ,1 } (x_l ) =b >0 $. Hence we can find a sequence of holomorphic sections $s_{l_k} \in H_{L^2}^0 \left( M_l ,\linebundle_l \right)  $ such that $l_k \to\infty $, $ \left\Vert s_{l_k} \right\Vert_{L^2} =1 $ and $ \lim_{k\to\infty} \left\Vert s_{l_k} \left( x_{l_k} \right) \right\Vert^2 = b $. Set $s_{l_k} = \phi_{i,l_k} e_{i,l_k } $ on $B_{r_i} \left( y_{i,l_k} \right) $. By taking a subsequence, the sequence of holomorphic functions $\phi_{i,l_k} $ converges to a holomorphic function $\phi_{i}$ on $B_{\frac{r_i}{100}} (y_i )$. The family of local holomorphic sections $ \left\lbrace \phi_{i} e_i \right\rbrace $ gives a holomorphic section $s \in H^0 \left( X_{reg} ,\linebundle \right) $ such that $ \left\Vert s \left( x_\infty \right) \right\Vert^2 = b $. By the volume convergence theorem \cite[Theorem 5.9]{chco3}, we have 
$$ \left\Vert s \right\Vert^2_{L^2} \leq \sup_{K\Subset X_{reg}} \lim_{l_k \to \infty } \int_{K} \left\Vert s_{l_k} \right\Vert^2 d\V_{\omega_{l_k }} \leq 1 .$$
It follows that $\rho_{\linebundle_\infty , h_\infty ,1 } (x_\infty ) \geq b $.
\end{proof}

Suppose that for each $x\in X$, there exist an open neighborhood $U$ of $x$ and an integer $D\in\mathbb{N} $, such that $\left( \linebundle^D ,h^D \right) $ can be extended to a Hermitian holomorphic line bundle on $U$. Since any holomorphic function on $X_{reg}$ can be extended to a holomorphic function on $X$, we see that $ \rho_{X, \mu ,\linebundle ,h,1} (x)$ can be defined as $  \sup_{ \left\Vert s \right\Vert_{L^2 } =1 }  \left( \left\Vert s^D (x) \right\Vert_{h^{D}}^2 \right)^{\frac{1}{D}}  ,$ where $D\in\mathbb{N}$ is an integer such that $\left( \linebundle^D ,h^D \right) $ can be extended to a Hermitian holomorphic line bundle on a neighborhood of $x\in X$. So we can extend $\rho_{X, \mu ,\linebundle ,h,m} $ to be a sequence of functions on $X$ in this case, $ \forall m\in\mathbb{N} $. It is easy to see that Lemma \ref{lmmuppercontinuitybergman} holds for all $x_\infty \in X $ in this case.

Before proving the lower semicontinuity of Bergman kernels, we recall the following estimate about the norms of holomorphic sections.

\begin{lmm}
\label{sobolevlmm}
Let $(M,\omega )$ be an $n$-dimensional K\"ahler manifold such that $\Ric(\omega ) \geq -\Lambda \omega $ for some $\Lambda \geq 0$, and $(\linebundle ,h)$ be a positive line bundle on $M$ equipped with a hermitian metric whose curvature form is $2\pi \omega $. Suppose that $B_2 (x)$ is relatively compact for some $x\in M$. Then there exists a constant $C=C(n,\Lambda )>0 $, such that for any $s\in H^0 (M,\linebundle )$, we have
\begin{eqnarray*}
 \Vert Ds (x) \Vert^2 + \Vert s (x) \Vert^2 & \leq &  \frac{C}{\Vol \left( B_1 (x) \right) } \int_{B_1 (x)} \Vert s \Vert^2 \omega^n  ,\;\; \forall x\in M.
\end{eqnarray*}
where $||\cdot ||$ denotes the norms associated with $h$ and $\omega $, and $D$ is the Chern connection on $\linebundle $.
\end{lmm}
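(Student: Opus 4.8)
The plan is to realise the full $1$-jet norm $u:=\|s\|^2+\|Ds\|^2$ (note $Ds=D's$, since $\bar\partial s=0$) as a nonnegative subsolution of an inequality $\Delta u\ge -C(n,\Lambda)\,u$, and then to combine a scale-invariant mean value inequality with an $L^2$ reverse-Poincar\'e estimate. The hypothesis that $B_2(x)$ is relatively compact is used only to guarantee that all of the local elliptic theory below takes place in a fixed compact region, playing the role of completeness.

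The first ingredient is two Bochner-type identities. Since $s$ is holomorphic and the curvature of $(\linebundle,h)$ is $2\pi\omega$, a standard computation --- equivalently, tracing Poincar\'e--Lelong, $\sqrt{-1}\,\partial\bar\partial\log\|s\|^2=-2\pi\omega$ off the zero set --- gives $\tfrac12\Delta\|s\|^2=\|D's\|^2-\pi n\,\|s\|^2\ge -\pi n\,\|s\|^2$. The more delicate claim, and the heart of the matter, is $\Delta\|\sigma\|^2\ge -C(n,\Lambda)\|\sigma\|^2$ for $\sigma:=D's$, regarded as an $\linebundle$-valued $(1,0)$-form; this is precisely the point where holomorphicity of $s$ together with the \emph{one-sided} bound $\Ric(\omega)\ge -\Lambda\omega$ is exactly what is needed. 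Writing $d^\nabla$ for the covariant exterior derivative, the curvature relations together with $\bar\partial s=0$ and the commutation of covariant derivatives show that $d^\nabla\sigma$ equals the curvature form $2\pi\omega$ tensored with $s$, and $(d^\nabla)^*\sigma$ is a dimensional constant times $s$; in particular $\sigma$ is ``harmonic up to explicitly controlled lower-order terms'', with $d^\nabla\sigma$ and $(d^\nabla)^*\sigma$ pointwise bounded by $C(n)\|s\|$. Since $\omega$ is parallel, feeding these identities back once more shows $d^\nabla(d^\nabla)^*\sigma$ and $(d^\nabla)^*d^\nabla\sigma$ are $\linebundle$-valued $1$-forms of pointwise size $\le C(n)\|\sigma\|$, so the bundle Hodge--Laplacian term in the Bochner--Weitzenb\"ock formula for $\sigma$ has absolute value $\le C(n)\|\sigma\|^2$; the remaining curvature contributions are the bounded $2\pi\omega$-term and a Ricci term that is $\ge-\Lambda\|\sigma\|^2$. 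Hence $\tfrac12\Delta\|\sigma\|^2\ge\|\nabla\sigma\|^2-C(n,\Lambda)\|\sigma\|^2\ge -C(n,\Lambda)\|\sigma\|^2$, exactly as in the classical Bochner formula for harmonic $1$-forms; note that no upper bound on $\Ric$, and no bound on the full curvature tensor, enters.

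Adding the two inequalities, $u=\|s\|^2+\|Ds\|^2\ge0$ satisfies $\Delta u\ge -C(n,\Lambda)\,u$ on $B_2(x)$. I then apply the scale-invariant local mean value inequality for nonnegative subsolutions of $\Delta u+Cu\ge0$ under a Ricci lower bound (Li--Schoen; equivalently Moser iteration fed by Bishop--Gromov volume doubling and Buser's Neumann--Poincar\'e inequality on balls), which gives $u(x)\le\sup_{B_{1/4}(x)}u\le \frac{C(n,\Lambda)}{\Vol(B_{1/2}(x))}\int_{B_{1/2}(x)}u\,\omega^n$. A reverse-Poincar\'e (Caccioppoli) estimate --- integrate $\tfrac12\Delta\|s\|^2=\|D's\|^2-\pi n\|s\|^2$ against $\chi^2$ with $\chi\equiv1$ on $B_{1/2}(x)$, $\mathrm{supp}\,\chi\subset B_1(x)$, and absorb the resulting $\int\chi^2\|D's\|^2$ --- yields $\int_{B_{1/2}(x)}\|D's\|^2\omega^n\le C(n)\int_{B_1(x)}\|s\|^2\omega^n$, hence $\int_{B_{1/2}(x)}u\,\omega^n\le C(n)\int_{B_1(x)}\|s\|^2\omega^n$. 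Combined with $\Vol(B_{1/2}(x))\ge c(n,\Lambda)\Vol(B_1(x))$ from relative volume comparison, these combine to $\|Ds(x)\|^2+\|s(x)\|^2\le \frac{C(n,\Lambda)}{\Vol(B_1(x))}\int_{B_1(x)}\|s\|^2\omega^n$, as asserted. I expect the only genuine obstacle to be the Bochner inequality for $\|D's\|^2$ with a constant depending only on $n$ and the \emph{lower} Ricci bound --- which hinges on $s$ being holomorphic --- while the mean value and Caccioppoli steps are routine elliptic theory on balls with a Ricci lower bound.
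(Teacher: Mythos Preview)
Your proposal is correct and follows essentially the same three-step strategy as the paper: Bochner-type inequalities showing that $\|s\|^2$ and $\|Ds\|^2$ are subsolutions of $\Delta u \geq -Cu$, a mean value inequality (the paper invokes Moser iteration, you invoke Li--Schoen), and an integration-by-parts argument on the first Bochner identity to bound $\int_{B_{1/2}}\|Ds\|^2$ by $\int_{B_1}\|s\|^2$. The only cosmetic differences are that the paper records the second Bochner formula as the exact identity $\Delta\|Ds\|^2 = \|DDs\|^2 - 4(n+2)\pi\|Ds\|^2$ with a purely dimensional constant (so that the Ricci lower bound enters only through Moser iteration), and in the last step uses a Cheeger--Colding cutoff with $|\nabla\eta|+|\Delta\eta|\leq C$ to integrate the identity directly, rather than your Caccioppoli absorption with a cutoff controlling only $|\nabla\chi|$.
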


\begin{proof}
By a straightforward computation, we obtain
\begin{eqnarray*}
\Delta \Vert s \Vert^2 & = & \Vert D s \Vert^2 -4n\pi \Vert s \Vert^2 , \\
\Delta \Vert D s \Vert^2 & = & \Vert DD s \Vert^2 -4\left(n+2 \right) \pi \Vert Ds \Vert^2 ,
\end{eqnarray*}
where $D$ is the covariant derivative of $(\linebundle ,h)$. Then the standard Moser iteration implies that
\begin{eqnarray*}
\sup_{B_{\frac{1}{2}} (x)} \Vert s \Vert^2 +\sup_{B_{\frac{1}{2}} (x)} \Vert Ds \Vert^2 & \leq & \frac{C}{ \Vol \left( B_1 (x) \right) } \int_{B_1 (x)} \Vert s \Vert^2 + \Vert Ds \Vert^2 \; \omega^n , 
\end{eqnarray*}
where $C$ is a constant that only depends on $n$ and $\Lambda $. See also \cite{gilt1} and \cite{hanlin1} for more details on Moser iteration. For notational convenience, the same latter $C$ will be used to denote constants depending on $n,\Lambda $. By Cheeger-Colding's construction in \cite{chco1}, there exists a $C^\infty $ function $\eta :M\to [0,1]$ such that $\eta =1$ on $B_{\frac{1}{2}} (x) $, $\eta =0$ on $M\sq B_1 (x) $, and $ |\nabla \eta | + |\Delta \eta | \leq C $. Then we have 
$$\int_{B_{1} (x)} \eta \Vert Ds \Vert^2 \; \omega^n = \int_{B_{1} (x)}  \Vert s \Vert^2  \Delta \eta + 4n\pi  \Vert s \Vert^2 \eta \; \omega^n \leq C  \int_{B_1 (x)} \Vert s \Vert^2 \; \omega^n , $$
and the lemma follows.
\end{proof}

Now we are ready to prove the lower semicontinuity of Bergman kernels. Our argument is to construct a sequence of holomorphic sections that approximates the given section. We can follow the arguments in \cite[Theorem 3.2]{donsun1}, \cite[Proposition 3.1]{lggs1}, and \cite[Lemma 5.3]{tg5}.  

\begin{lmm}
\label{lmmlowercontinuitybergman}
Let $(\linebundle_l ,h_l) $ be a sequence of Hermitian holomorphic line bundles $(\linebundle_l ,h_l)$ on $M_l$, such that $\Ric (h_l ) = 2\pi\omega_l $. Assume that the line bundles $(\linebundle_l ,h_l) $ converge to a Hermitian line bundle $(\linebundle_\infty ,h_\infty ) $ on $X_{reg}$. 

Then for each sequence of points $x_l \to x_{\infty } \in X $, we have
$$ \liminf_{l\to\infty } \rho_{\linebundle_l , h_l ,m } (x_l ) \geq \rho_{\linebundle_\infty , h_\infty ,m } (x_\infty ) ,\; \forall m>\frac{\Lambda}{2\pi} . $$
\end{lmm}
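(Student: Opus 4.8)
The plan is to run the ``transplant and correct'' scheme outlined just before the statement, following \cite[Theorem 3.2]{donsun1}, \cite[Proposition 3.1]{lggs1} and \cite[Lemma 5.3]{tg5}: take a near-optimal holomorphic section of $\linebundle_\infty^m$ on $X_{reg}$, carry it to $M_l$ through the convergence of complex structures and of Hermitian line bundles, and then repair the resulting $\partialbar$-error with H\"ormander's estimate. First I would reduce to the case $b:=\rho_{\linebundle_\infty,h_\infty,m}(x_\infty)>0$, fix $\epsilon\in(0,b)$, and choose $s_\infty\in\hl(X_{reg},\linebundle_\infty^m)$ with $\Vert s_\infty\Vert_{L^2}=1$ and $\Vert s_\infty(x_\infty)\Vert_{h_\infty^m}^2>b-\epsilon$; when $x_\infty\in X_{sing}$ this is read through the $D$-th root definition, using that $\linebundle_\infty^D$ extends over a neighborhood of $x_\infty$ by Proposition \ref{constructionlinebundleprop}. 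Since $\mu(X_{sing})=0$ and $s_\infty\in L^2$, pick a compact $K\subset X_{reg}$ with $\int_{X_{reg}\setminus K}\Vert s_\infty\Vert^2\,d\mu<\epsilon$, which when $x_\infty\in X_{reg}$ we also take to contain a neighborhood of $x_\infty$.

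Next comes the construction of the smooth approximants. Using Theorem \ref{complexstructurethmricci} and Proposition \ref{constructionlinebundleprop}, cover $K$ by finitely many coordinate balls $U_i\subset X_{reg}$ trivializing $\linebundle_\infty$, write $s_\infty=\phi_i e_i$ with $\phi_i$ holomorphic, transplant the converging chart maps and frames to obtain holomorphic functions $\phi_{i,l}$ (precompose $\phi_i$ with the converging charts on $M_l$) and holomorphic local frames $e_{i,l}$ of $\linebundle_l^m$ for large $l$, cut off at the boundary of $K$, and glue by a transplanted partition of unity $\{\eta_{i,l}\}$, setting $\tilde s_l=\sum_i\eta_{i,l}\phi_{i,l}e_{i,l}\in C^\infty_c(M_l,\linebundle_l^m)$. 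Because $\sum_i\partialbar\eta_{i,l}=0$ and the transition functions $e_{i,l}e_{j,l}^{-1}$ converge, the error $\partialbar\tilde s_l=\sum_i(\partialbar\eta_{i,l})(\phi_{i,l}e_{i,l}-\phi_{j_0,l}e_{j_0,l})$ tends to $0$ in $L^2$; by the volume convergence theorem \cite[Theorem 5.9]{chco3}, $\lim_l\Vert\tilde s_l\Vert_{L^2}\le(\int_{X_{reg}}\Vert s_\infty\Vert^2\,d\mu)^{1/2}=1$; and one arranges $\tilde s_l$ to be genuinely holomorphic in a fixed neighborhood of $x_l$ with $\tilde s_l(x_l)\to s_\infty(x_\infty)$ (refine the cover near the regular point $x_\infty$; when $x_\infty\in X_{sing}$, instead pass the $D$-th powers $\tilde s_l^D$ to a locally uniform limit near $x_\infty$ via Lemma \ref{sobolevlmm} and Montel's theorem). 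I expect this patching step to be the main obstacle: since $X_{reg}$ is not compact — its ``boundary'' being $X_{sing}$ together with spatial infinity — one must insert further cutoffs whose Dirichlet energy weighted by $\Vert s_\infty\Vert^2$ is negligible, which works because $X_{sing}$ has codimension $\geq2$ (hence zero $2$-capacity) and the region where the volume density drops is of small measure by the Minkowski estimate of Cheeger--Jiang--Naber (Theorem \ref{jcwsjan1thm17coro}), while at infinity an ordinary radial cutoff suffices.

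For the correction, observe that the Chern curvature of $h_l^m$ is $2\pi m\omega_l$ and $\Ric(\omega_l)\geq-\Lambda\omega_l$, so $\Ric(\omega_l)+\Ric(h_l^m)\geq(2\pi m-\Lambda)\omega_l$, which is strictly positive precisely because $m>\tfrac{\Lambda}{2\pi}$ — this is where the hypothesis enters, and why the constraint cannot be dropped. As $(M_l,\omega_l)$ is complete K\"ahler, Proposition \ref{l2m} (with trivial weight) produces $u_l$ with $\partialbar u_l=\partialbar\tilde s_l$ and $\Vert u_l\Vert_{L^2}^2\le(2\pi m-\Lambda)^{-1}\Vert\partialbar\tilde s_l\Vert_{L^2}^2\to0$; set $s_l=\tilde s_l-u_l\in\hl(M_l,\linebundle_l^m)$. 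Near $x_l$ we have $\partialbar u_l=\partialbar\tilde s_l=0$, so $u_l$ is holomorphic there, and Lemma \ref{sobolevlmm} applied to $\linebundle_l^m$ (the curvature constant $2\pi m$ only affects the constant) together with the non-collapsing bound $\Vol(B_1(x_l))\geq v'>0$ — valid by relative volume comparison since $\dist(x_l,p_l)$ stays bounded — give $\Vert u_l(x_l)\Vert^2\le Cv'^{-1}\Vert u_l\Vert_{L^2}^2\to0$. Hence $\Vert s_l(x_l)\Vert^2\to\Vert s_\infty(x_\infty)\Vert^2>b-\epsilon$ (and likewise for the $D$-th root when $x_\infty\in X_{sing}$).

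Finally, since $x_l$ is a smooth point of $M_l$ we have $\rho_{\linebundle_l,h_l,m}(x_l)\geq\Vert s_l(x_l)\Vert^2/\Vert s_l\Vert_{L^2}^2$, and $\Vert s_l\Vert_{L^2}\le\Vert\tilde s_l\Vert_{L^2}+\Vert u_l\Vert_{L^2}\le1+o_l(1)$, so $\liminf_l\rho_{\linebundle_l,h_l,m}(x_l)\geq b-\epsilon$; letting $\epsilon\to0$ completes the proof. Together with Lemma \ref{lmmuppercontinuitybergman} this will give the full convergence $\rho_{\linebundle_l,h_l,m}(x_l)\to\rho_{\linebundle_\infty,h_\infty,m}(x_\infty)$ claimed in Theorem \ref{thmcontinuousbergmankernel} for $m>\tfrac{\Lambda}{2\pi}$.
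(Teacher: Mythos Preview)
Your proposal is correct and follows essentially the same route as the paper: choose a near-optimal section $s_\infty$ on $X$, transplant it through converging charts/frames with a partition of unity, cut off near $X_{sing}$ and at spatial infinity (controlling the $\partialbar$-error via Theorem \ref{jcwsjan1thm17coro} and the $L^\infty$ bound from Lemma \ref{sobolevlmm}), correct with H\"ormander using $c=2\pi m-\Lambda>0$, and read off the pointwise value via Lemma \ref{sobolevlmm}. Two small remarks: (i) the paper additionally shows that a genuine peak section exists (so $\rho_{\linebundle_\infty,h_\infty,m}(x_\infty)<\infty$), which your $\epsilon\to0$ argument bypasses but is worth recording; (ii) in the singular case your parenthetical should apply Montel/Lemma \ref{sobolevlmm} to the \emph{holomorphic} section $s_l^D$, not to $\tilde s_l^D$ --- the latter vanishes near $x_l$ because of the cutoff $\chi$, so it carries no information there, whereas $s_l^D$ is globally holomorphic and its subsequential limit agrees with $s_\infty^D$ on nearby regular points, hence everywhere by normality.
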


\begin{proof}
Without loss of generality, we can assume that $m=1$, and $\Lambda <2\pi $.

Let us first prove that the Bergman kernel of $\left( X , \mu , \linebundle_\infty ,h_\infty \right) $ at $x_\infty $ can be represented by $ \left\Vert s_{\infty} (x_\infty ) \right\Vert^2 $, where $\mu $ is the $2n$-dimensional Hausdorff measure on $X$, $s_{\infty} \in H^0_{L^2} \left( X_{reg},\linebundle_\infty \right) $ and $ \left\Vert s_{\infty} \right\Vert_{L^2} =1 $. Such a section $s_\infty $ is called a peak section at $x_\infty $. For each $t\in\mathbb{N}$, choosing a holomorphic section $s_{\infty ,t} \in H^0_{L^2} \left( X_{reg} ,\linebundle_\infty \right) $ satisfying that $ \left\Vert s_{\infty ,t} \right\Vert_{L^2} =1 $ and 
$$ \left\Vert s_{\infty ,t} (x_\infty ) \right\Vert^2 \geq \min\left\lbrace \rho_{\linebundle_\infty ,h_\infty ,1} (x_\infty ) -t^{-1} ,t  \right\rbrace .$$
Choosing a holomorphic chart $F = (z_1 ,\cdots ,z_n ) :B_{r} (y) \to U\subset\mathbb{C}^n $, where $r>0$ is a constant, and $y\in X_{reg}$. Then there exist a constant $c>0$ and a local frame $ e_y \in H^0 \left( B_{r} (y) ,\linebundle_\infty \right) $ such that $c \leq \Vert e_y \Vert \leq c^{-1} $ and $ \omega_\infty \geq c F^* \omega_{Euc} $. We can shrink the value of $r$ if necessary. Let $f_{\infty ,t,y} \in \mathcal{O} \left( B_r (y) \right) $ satisfying $ s_{\infty ,t} =f_{\infty ,t,y} e_y $. Hence we have a constant $C>0 $ independent of $t$, such that $ \int_U |f_{\infty ,t,y}|^2 d\V_{Euc} \leq C $, and the mean value equation of holomorphic functions now shows that $\sup_{K} |f_{\infty ,t,y}|^2 \leq C' $ for any compact $K\subset U$, where $C'>0 $ is also a constant independent of $t$. By taking a suitable subsequence, we can assume that $s_{\infty ,t}$ converges to a holomorphic section $s_{\infty} \in H^0_{L^2} \left( X_{reg} ,\linebundle_\infty \right) $ uniformly on each compact set $K\subset X_{reg} $. 

By the above, we have $ \left\Vert s_{\infty} \right\Vert_{L^2} \leq 1 $, and $ \rho_{\linebundle_\infty ,h_\infty ,1} (x_\infty ) = \left\Vert s_{\infty} (x_\infty ) \right\Vert^2 <\infty $ when $x_\infty \in X_{reg}$. Now we assume that $x_\infty \in X_{sing}$. Since $ \left( \linebundle_\infty^D ,h_\infty^D \right) $ can be extended to a Hermitian line bundle around $x_\infty $, we can find constants $r' ,c'>0$ and a local frame $ e_x \in H^0 \left( B_{r'} (x_\infty ) ,\linebundle^D_\infty \right) $ such that $c' \leq \Vert e_x \Vert \leq c'^{-1} $. Let $f_{\infty ,t,x} \in \mathcal{O} \left( B_{r'} (x_\infty ) \right) $ satisfying $ s^D_{\infty ,t} =f_{\infty ,t,x} e_x $. Then we have $ \int_{B_{r'} (x_\infty )} |f_{\infty ,t,x}|^2 d\mu \leq C'' $ for some constant $C''>0$. By the desingularization of analytic varieties \cite{bm1}, there exist a complex manifold $U'$ and a proper holomorphic map $\sigma : U'\to B_{r'} (x_\infty ) $ such that $E=\sigma^{-1} ( X_{sing} ) $ is contained in a normal crossing divisor in $U'$. By the Cauchy's integral formula, the inner closed uniform convergence of $ f_{\infty ,t,x} \circ \sigma $ on $U'\sq E$ now shows that $ f_{\infty ,t,x} \circ \sigma $ is inner closed uniform convergence on $U'$. Hence $ s^D_{\infty ,t} $ converges to a holomorphic section $s'_{\infty} \in H^0_{L^2} \left( X_{reg} ,\linebundle^D_\infty \right) $ uniformly around $x_\infty $. Clearly, $s_{\infty ,t}$ converge to a holomorphic section $s_{\infty} \in H^0_{L^2} \left( X_{reg} ,\linebundle_\infty \right) $ as $t\to\infty $. It follows that $s^D_\infty = s'_\infty $, and $ \rho_{\linebundle_\infty ,h_\infty ,1} (x_\infty ) = \left\Vert s_{\infty} (x_\infty ) \right\Vert^2 <\infty $.

The next thing to do in the proof is to find a sequence of holomorphic sections to approximate the peak section. Fix a small constant $\epsilon >0$. By the convergence of complex structures, there exist an open covering $ \{ B_r (y_i ) \}_{i=1}^N $ of $B_{\epsilon^{-1}} (x_\infty ) \sq B_\epsilon (X_{sing} ) $, sequences of points $ M_{l} \ni y_{i,l} \to y_i $ and sequences of charts $F_{i,l}$ on $B_{2r} (y_{i,l}) \subset M_{l}$ for sufficiently large $l$, such that $F_{i,l}$ converge to a chart on $B_{2r} (y_i) $. Since $(\linebundle_l ,h_l )$ converges to $( \linebundle_\infty ,h_\infty )$, we can assume that there are a family of local frames $e_{i,l} \in H^0 \left( B_{2r } (y_{i,l} ) ,\linebundle \right) $ and local frames $ e_{i} \in H^0 \left( B_{2r } (y_{i} ) ,\linebundle \right) $ such that the norms of frames $ \left\Vert e_{i,l} \right\Vert $ and transition functions $f_{i,j,l} = e_{i,l} e^{-1}_{j,l}$ converge to the norms $ \left\Vert e_{i} \right\Vert $ and transition functions $f_{i,j} = e_{i} e^{-1}_{j}$, respectively.

Let $s_\infty $ be the peak section at $x_\infty \in X $. Then we have $s_\infty =\psi_i e_i $ on $B_{2r} (y_i ) $ for each $i$, where $\psi_i \in\mathcal{O} (B_{2r} (y_i ) ) $. Choosing cut-off functions $\eta_i \in C_c^\infty ( F_i ( B_{2r} (y_i ) ) ) $ such that $ 0\leq \eta_i \leq 1 $, and $\eta_i =1 $ on $ F_i \left( B_{\frac{3r}{2}} (y_i ) \right) $. Hence $ \sum_i \eta_i \circ F_{i,l} \geq 1 $ on $\cup_{i} B_r (y_{i,l}) $ for sufficiently large $l$. Set $\eta_{i,l} =\frac{\eta_i \circ F_{i,l} }{\sum_j \eta_j \circ F_{j,l} } $. Then $s'_{l} = \sum_{i} \eta_{i,l} \cdot \left( \psi_i \circ F_{i,l} \right) e_{i,l} $ gives a smooth section on $\cup_{i} B_r (y_{i,l}) $ for sufficiently large $l$. Clearly, we have $\left\Vert  s'_l \right\Vert_{L^2} \leq 1+\Psi (l^{-1} | n,\epsilon ) $, $\left\Vert \partialbar s'_l \right\Vert_{L^2} \leq \Psi (l^{-1} | n,\epsilon ) $ and $\left\Vert  s'_l \right\Vert \leq C $ for sufficiently large $l$, where $C>0$ is a constant independent of $l$ and $\epsilon $. For abbreviation, we use the same letter $C$ for constants independent of $l$ and $\epsilon $. By approximating the function $ \chi_{\epsilon} (y) = \eta \left( 10^{-1} \epsilon^{-1} \dist (y,X_{sing}) \right) $, we can construct smooth function $\chi_{\epsilon ,l} $ on $B_{\epsilon^{-1}} (x_l ) $ for sufficiently large $l$, where $ \eta\in C^\infty (\mathbb{R}) $, $\eta'\geq 0$, $\eta (t)=0$ for $t\leq 0$, and $\eta (t) =1 $ for $t\geq 1$. Without loss of generality, we can assume that $\left| \nabla \chi_{\epsilon ,l} \right| \leq C \epsilon^{-1} $ and $\mathrm{supp} \chi_{\epsilon ,l } \subset \cup_{i} B_r (y_{i,l}) $. Let $\varphi_{\epsilon ,l} \in C^{\infty}_{c} (B_{\epsilon^{-1} } (x_l) )$ such that $\left| \nabla \varphi_{\epsilon ,l} \right| \leq C\epsilon$, $ \varphi_{\epsilon ,l} =1 $ on $B_{2^{-1} \epsilon^{-1}} (x_l ) $, and $ \varphi_{\epsilon ,l} =0 $ outside $B_{ \epsilon^{-1}} (x_l ) $. Hence we have $\left\Vert \partialbar \left( \varphi_{\epsilon ,l} \chi_{\epsilon ,l} s'_l \right) \right\Vert \leq C\left\Vert \partialbar s'_l \right\Vert +C\epsilon +C\left| \nabla \chi_{\epsilon ,l} \right| $. Recall that $y\in X_{reg}$ when there exists a tangent cone at $y\in X$ splitting off $\mathbb{R}^{2n-2}$. Then Theorem \ref{jcwsjan1thm17coro} shows that $ \left| \nabla \chi_{\epsilon ,l} \right|_{L^2} \leq C\epsilon $. It follows that $\left\Vert \partialbar \left( \varphi_{\epsilon ,l} \chi_{\epsilon ,l} s'_l \right) \right\Vert_{L^2} \leq \Psi (l^{-1} | n,\epsilon ) +C\epsilon $.

By H\"ormander's $L^2$-estimate, we can find a $\linebundle$-valued smooth function $\xi_l $ on $M_l $, satisfying that $\partialbar\xi_l = \partialbar \left( \varphi_{\epsilon ,l} \chi_{\epsilon ,l} s'_l \right) $ and $ \left\Vert \xi_l \right\Vert_{L^2} \leq \frac{2\pi}{2\pi -\Lambda} \left\Vert \partialbar \left( \varphi_{\epsilon ,l} \chi_{\epsilon ,l} s'_l \right) \right\Vert_{L^2} \leq \Psi (l^{-1} | n,\epsilon ) +C\epsilon $. Let $s_l =s'_l -\xi_l $. By choosing suitable constant $\epsilon$ for each $l$, the integral $\left\Vert \xi_l \right\Vert_{L^2} \leq \Psi ( l^{-1} ) $, and $\left\Vert s'_l \right\Vert $ converge to $\left\Vert s_\infty \right\Vert $. Note that $\left\Vert s'_l \right\Vert $ converge to $\left\Vert s_\infty \right\Vert $ on $B_{\epsilon^{-1}} (x_\infty ) \sq B_\epsilon (X_{sing} ) $. Now Lemma \ref{sobolevlmm} shows that $\left\Vert s_l (x_l) \right\Vert$ is very close to $\left\Vert s_\infty (x_\infty ) \right\Vert$ for sufficiently large $l$. Hence $\liminf_{l\to\infty } \rho_{\linebundle_l , h_l ,1 } (x_l ) \geq \rho_{\linebundle_\infty , h_\infty ,1 } (x_\infty ) ,$ and the proof is complete.
\end{proof}

Combining Proposition \ref{constructionlinebundleprop}, Lemma \ref{lmmlowercontinuitybergman} and Lemma \ref{lmmuppercontinuitybergman}, we can obtain Theorem \ref{thmcontinuousbergmankernel}. We rewrite it here.

\begin{thm}
Let $\left( \linebundle_l ,h_l \right) $ be a sequence of Hermitian line bundles on $M_l $ such that $\Ric \left( h_l \right) = 2\pi \omega_l $. Then we can find a sequence of integers $l_j \to\infty $, such that $\left( \linebundle_{l_j} , h_{l_j} \right) $ converges to a holomorphic line bundle $\left( \linebundle_{\infty } , h_{\infty } \right) $ on $X_{reg } $, and for each $x\in X$, there exists an integer $D\in\mathbb{N}$, such that $\left( \linebundle_\infty^D ,h_\infty^D \right) $ converges to a Hermitian holomorphic line bundle on a neighborhood of $x\in X$. And the Bergman kernels $\rho_{ M_{l_j} ,\mu , \linebundle_{l_j} , h_{l_j} , m} $ converge to the Bergman kernel $\rho_{X,\mu ,\linebundle_\infty ,h_\infty ,m} $ in the Gromov-Hausdorff sense, $\forall  m> \frac{\Lambda}{2\pi} $, where $\mu $ is the $2n$-dimensional Hausdorff measure.
\end{thm}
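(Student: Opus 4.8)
The plan is to assemble the statement from three facts already established above: Proposition~\ref{constructionlinebundleprop} supplies the convergence of the polarizations, while Lemma~\ref{lmmuppercontinuitybergman} and Lemma~\ref{lmmlowercontinuitybergman} supply the upper and lower semicontinuity of the Bergman kernels, respectively. No new analysis is needed beyond bookkeeping.

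First I would apply Proposition~\ref{constructionlinebundleprop} to extract a single sequence of integers $l_j\to\infty$ along which $\left(\linebundle_{l_j},h_{l_j}\right)$ converges to a holomorphic line bundle $\left(\linebundle_\infty,h_\infty\right)$ with continuous Hermitian metric on $X_{reg}$, and such that for every $x\in X$ there are an integer $D\in\mathbb{N}$ and a neighborhood of $x$ on which $\left(\linebundle_{l_j}^D,h_{l_j}^D\right)$ converges to a Hermitian holomorphic line bundle extending $\left(\linebundle_\infty^D,h_\infty^D\right)$. This gives the first assertion verbatim. A point worth stressing is that one and the same subsequence works for every $m$: once the local frames and transition functions of $\linebundle_{l_j}$ converge to those of $\linebundle_\infty$ on $X_{reg}$, their $m$-th tensor powers converge as well, and likewise for the distinguished powers near the singular points; so no diagonal selection over $m$ is required, and $\rho_{X,\mu,\linebundle_\infty,h_\infty,m}$ is a well-defined function on all of $X$ in the sense discussed after Lemma~\ref{lmmuppercontinuitybergman}.

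Next, fix an integer $m>\frac{\Lambda}{2\pi}$ and an arbitrary sequence $x_{l_j}\to x_\infty\in X$ converging in the Gromov-Hausdorff sense. Applying Lemma~\ref{lmmuppercontinuitybergman} to the convergent sequence of Hermitian line bundles $\left(\linebundle_{l_j}^m,h_{l_j}^m\right)\to\left(\linebundle_\infty^m,h_\infty^m\right)$ (using the extension property above so that the limit Bergman kernel is defined at $x_\infty$ even when $x_\infty\in X_{sing}$, through the $D$-th root normalization), and noting $\rho_{\linebundle_{l_j}^m,h_{l_j}^m,1}=\rho_{\linebundle_{l_j},h_{l_j},m}$, yields
$$\limsup_{j\to\infty}\rho_{M_{l_j},\mu,\linebundle_{l_j},h_{l_j},m}\left(x_{l_j}\right)\leq\rho_{X,\mu,\linebundle_\infty,h_\infty,m}\left(x_\infty\right).$$
Applying Lemma~\ref{lmmlowercontinuitybergman}, whose hypotheses $\Ric\left(h_{l_j}\right)=2\pi\omega_{l_j}$ and $2\pi m>\Lambda$ are precisely our standing assumptions, yields the reverse inequality with $\liminf$ in place of $\limsup$. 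Combining the two inequalities shows that $\lim_{j\to\infty}\rho_{M_{l_j},\mu,\linebundle_{l_j},h_{l_j},m}\left(x_{l_j}\right)$ exists and equals $\rho_{X,\mu,\linebundle_\infty,h_\infty,m}\left(x_\infty\right)$, which is exactly the asserted convergence of Bergman kernels.

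The substantive difficulties are entirely inside the two lemmas — the construction of a peak section on $X$ and its approximation by sections of $\linebundle_{l_j}$ via H\"ormander's $L^2$-estimate, where the constant $\frac{2\pi}{2\pi-\Lambda}$ forces the restriction $2\pi m>\Lambda$ — so in this final assembly I expect the only real work to be bookkeeping: checking that the extended definition of $\rho_{X,\mu,\linebundle_\infty,h_\infty,m}$ at a singular point (using $\bigl(\Vert s^D(x_\infty)\Vert_{h^D}^2\bigr)^{1/D}$ for the $D$ of Proposition~\ref{constructionlinebundleprop}) is independent of the auxiliary choices and is the object for which both semicontinuity statements were proved, and that the measure $\mu$ throughout is the $2n$-dimensional Hausdorff measure, which is what makes the volume convergence theorem \cite[Theorem~5.9]{chco3} applicable inside Lemma~\ref{lmmuppercontinuitybergman}.
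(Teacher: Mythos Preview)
Your proposal is correct and follows exactly the paper's own route: the theorem is obtained by combining Proposition~\ref{constructionlinebundleprop} with Lemma~\ref{lmmuppercontinuitybergman} and Lemma~\ref{lmmlowercontinuitybergman}, with no additional analysis. The bookkeeping points you flag (a single subsequence suffices for all $m$, and the extended definition of $\rho$ at singular points via the $D$-th root) are handled in the paper just as you describe.
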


\begin{rmk}
By the same argument, we can show that the Bergman kernels $\rho_{m} (p_l ) $ converge to $\rho_{X,m} (p) $ when $M_l $ are pseudoconvex (need not be complete) and $B_{1} (p_l )$ are relatively compact in $M_l $.
\end{rmk}

We conclude this section by pointing out that Theorem \ref{thmcontinuousbergmankernel} may not hold when $m\leq \frac{\Lambda}{2\pi} $.

\begin{exap}
\label{genus2example}
Let $M$ be a complex curve with genus $2$. Then there exists a unique K\"ahler metric $\omega_M$ on $M$ such that $\int_M {\omega_M} =2 $ and the sectional curvature $ \equiv -2\pi $. Since the canonical bundle $K_M $ satisfying $\deg (K_M ) =2 $, we see that $[\omega_M ] \in c_1 (K_M ) $. Choosing a sequence of points $\{ p_l \}_{l=1}^{\infty} $ in $M$ so that $p_l \to p_\infty \in M $ and $p_l\neq p_\infty $, $\forall l\in\mathbb{N}$. Let $\linebundle_l = K_M +(p_l ) -(p_\infty ) $ for each $l$. Then there is a unique Hermitian metric $h_l $ on $\linebundle_l$ such that $\Ric (h_l) =2\pi\omega_M $. Clearly, we have $(\linebundle_l ,h_l ) \to (K_M ,h_\infty ) $, where $h_M$ is the unique Hermitian metric on $K_M$ satisfying $\Ric (h_\infty ) =2\pi\omega_M $. But the Riemann-Roch formula implies that
\begin{eqnarray*}
\int_M \left( \rho_{\linebundle_l ,h_l ,1 } - \rho_{K_M ,h_\infty ,1 } \right) \omega_M & = & \dim H^0 (M,\linebundle_l ) - \dim H^0 (M,K_M ) \\
& = & \dim H^0 (M,K_M \otimes \linebundle^{-1}_l ) - \dim H^0 (M,\mathcal{O} ) =-1 ,\;\forall l \in\mathbb{N} .
\end{eqnarray*}
\end{exap}

\section{\texorpdfstring{$L^2$}{Lg} estimate on Gromov-Hausdorff limits }
\label{l2estghlimitsection}

In this section, we will establish a version of $L^2$-estimate on the Gromov-Hausdorff limits.

Let $ (X,d,p)  $ be the pointed Gromov-Hausdorff limit of a sequence of pointed complete polarized K\"ahler manifolds $\left( M_l ,\omega_l ,p_l \right) $ with $\Ric \left( \omega_l \right) \geq -\Lambda \omega_l $ and $\Vol \left( B_1 \left( p_l \right) \right) \geq v $, $\forall l\in\mathbb{N} $, where $\Lambda ,v>0$ are constants. Then $X$ is a normal complex space, and we can assume that the complex structures of $M_l $ converge to the complex structure of $X$ by choosing a suitable subsequence. Theorem \ref{thmuniquenesskahlerpotential} shows that $\omega_l $ converge to a unique closed positive current $\omega_X $ on $X_{reg} $. The $L^2$ estimate we consider is about this current. Note that the inverses $g^{j\bar{k}}_i$ of metric tensors are convergence on $X_{reg}$ \cite[Lemma 3.4]{lggs2}, so the norms of smooth $(0,1)$-forms are well defined.

\begin{prop}
\label{propl2estimateghlimit}
Let $\linebundle'_\infty $ be a line bundle on $X$ with a continuous Hermitian metric $h'_\infty $. Assume that $\linebundle'_\infty $ is the limit line bundle of a sequence of line bundles on $\left( M_l ,\omega_l \right) $. Suppose that $ \varphi \in L^1_{loc} (X) $ can be approximated by a decreasing sequence of smooth function $\left\lbrace \varphi_i \right\rbrace_{i=1}^{\infty} $ on $X$ such that
\begin{equation*}
\label{l2conditionformula}
\sqrt{-1} \partial\partialbar \varphi_i + \Ric (h'_\infty ) - \Lambda \omega_X \geq c\omega_X ,\; \forall i\in\mathbb{N},
\end{equation*}
where $c>0$ is a constant. Then for any $\linebundle'_\infty $-valued smooth function $\xi \in L^2$ on $X$, there exists an $\linebundle'_\infty $-valued function $u\in L^{2}$ such that $\partialbar u= \alpha = \partialbar \xi $ and 
\begin{equation}
\label{l2propformula}
\int_{X} \Vert u \Vert^{2} e^{-\varphi } d\mu \leq \int_{X} c^{-1} \Vert \alpha \Vert^{2} e^{-\varphi } d\mu .
\end{equation}

If we further assume that the non-collapsing condition $\Vol (B_1 (x) ) \geq v $ holds for all $x\in X$, then this estimate (\ref{l2propformula}) holds for all smooth $\linebundle'_\infty $-valued $(0,1)$-form $\alpha \in L^2 $ satisfying $\partialbar \alpha =0 $.
\end{prop}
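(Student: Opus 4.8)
The plan is to solve the equation on the smooth approximating manifolds $M_l$, where H\"ormander's estimate (Proposition \ref{l2m}) applies in its classical form, and then pass to the Gromov--Hausdorff limit, letting first $l\to\infty$ and afterwards running the weight $\varphi_i$ up to $\varphi$. Since $\linebundle'_\infty$ is a limit line bundle, fix converging holomorphic charts and local frames $e_{j,l}\to e_j$ with $\Vert e_{j,l}\Vert_{h_l}\to\Vert e_j\Vert_{h'_\infty}$ on compact subsets of $X_{reg}$, and fix the limit K\"ahler potentials $\phi_l\to\phi_\infty$ (Lemma \ref{lmmexistkahlerspace}), so that $\omega_l=\sqrt{-1}\partial\partialbar\phi_l$ and $\omega_X=\sqrt{-1}\partial\partialbar\phi_\infty$ locally. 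Working on a fixed large ball $B_R(p)$ --- the global statement following by exhausting $X$ and using the a priori bound (\ref{l2propformula}) to pass to the limit $R\to\infty$ --- cut the datum off at distance $\epsilon$ from $X_{sing}$ and pull back $\varphi_i$ together with the cut-off datum through the charts to smooth objects $\varphi_{i,l}$ and $\xi_l$ (resp.\ $\alpha_l$) on $M_l$, supported in $B_R(p_l)\sq B_\epsilon(X_{sing})$.

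The main technical step is to convert the current-level hypothesis into a pointwise curvature inequality on $M_l$. Writing $\theta_j=-\log\Vert e_j\Vert^2_{h'_\infty}$, the hypothesis $\sqrt{-1}\partial\partialbar\varphi_i+\Ric(h'_\infty)\geq(\Lambda+c)\,\omega_X$ says precisely that $v_{i,j}:=\varphi_i+\theta_j-(\Lambda+c)\phi_\infty$ is plurisubharmonic on $X_{reg}$, and $v_{i,j}$ is the locally uniform limit of its smooth analogues $v_{i,j,l}:=\varphi_{i,l}+\theta_{j,l}-(\Lambda+c)\phi_l$ on $M_l$, with $\theta_{j,l}=-\log\Vert e_{j,l}\Vert^2_{h_l}$. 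Regularizing $v_{i,j}$ in charts to smooth plurisubharmonic functions $v_{i,j,\delta}\downarrow v_{i,j}$ and transferring $v_{i,j,\delta}$ --- now a smooth function on $X_{reg}$, so the transfer is $C^\infty$-convergent --- to $M_l$, one assembles a smooth Hermitian metric $H_{l,i,\delta}$ on $\linebundle_l$, converging up to an $O(\delta)$ factor to $e^{-\varphi_i}h'_\infty$, whose local weight $\Phi_{l,i,\delta}$ satisfies
\[
\sqrt{-1}\partial\partialbar\Phi_{l,i,\delta}+\Ric(\omega_l)\geq\big(c-\Psi(l^{-1}\,|\,n,\Lambda,v)\big)\,\omega_l
\]
on the relevant part of $M_l$ for $l$ large, using $\Ric(\omega_l)\geq-\Lambda\omega_l$ and $\sqrt{-1}\partial\partialbar v_{i,j,\delta}\geq0$.

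By Proposition \ref{l2m} applied to $(\linebundle_l,H_{l,i,\delta})$ and the complete metric $\omega_l$ (after a routine localization to a pseudoconvex domain containing the support of the data), one gets $u_l$ with $\partialbar u_l=\partialbar\xi_l$ and $\int_{M_l}\Vert u_l\Vert^2\omega_l^n\leq(c-\Psi(l^{-1}))^{-1}\int_{M_l}\Vert\partialbar\xi_l\Vert^2\omega_l^n$, norms taken with respect to $H_{l,i,\delta}$ and $\omega_l$. The cut-off error in $\partialbar\xi_l$ tends to $0$ in $L^2$ as $\epsilon\to0$, since $\Vol\big(B_\epsilon(X_{sing})\cap B_R\big)\leq C\epsilon^4$ by Theorem \ref{jcwsjan1thm17coro} (the singular set has real codimension at least $4$); choosing $\epsilon=\epsilon(l)\to0$ and using the volume convergence \cite[Theorem 5.9]{chco3} with $\omega_l\to\omega_X$ and $h_l\to h'_\infty$, the right-hand side tends to $c^{-1}e^{O(\delta)}\int_X\Vert\alpha\Vert^2 e^{-\varphi_i}d\mu$. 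Since $u_l-\xi_l$ is holomorphic with locally bounded $L^2$ norm, interior estimates give a subsequence converging locally uniformly on $X_{reg}$ to a section $u_{i,\delta}$ with $\partialbar u_{i,\delta}=\alpha$; normality of $X$ lets $u_{i,\delta}-\xi$, hence $u_{i,\delta}$, extend across $X_{sing}$ as in the proof of Lemma \ref{lmmlowercontinuitybergman}, and since $\mu(X_{sing})=0$ the weighted bound is preserved. Letting $\delta\to0$ and then $i\to\infty$ --- with $e^{-\varphi_i}\uparrow e^{-\varphi}$, monotone convergence on the right, and on the left a standard argument with the intermediate weights $e^{-\varphi_j}$, weak $L^2$-compactness, lower semicontinuity of the $L^2(e^{-\varphi_j})$-norm, and weak closedness of $\partialbar$ --- produces $u$ with $\partialbar u=\alpha$ and (\ref{l2propformula}); a diagonal/exhaustion argument gives the global statement.

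For the second statement there is no primitive $\xi$, and the transferred form $\alpha_l$ (built from a partition of unity and the cut-off near $X_{sing}$) is only approximately $\partialbar$-closed: its defect $\partialbar\alpha_l$ is supported in the chart overlaps, where it is small because $\partialbar(\sum_i\eta_{i,l})=0$ and the local pieces, all representing $\alpha$, agree in the limit, and in an $\epsilon$-neighbourhood of $X_{sing}$; one first corrects it by solving $\partialbar w_l=\partialbar\alpha_l$ on $M_l$ with $\Vert w_l\Vert\to0$ and replaces $\alpha_l$ by the $\partialbar$-closed form $\alpha_l-w_l$, after which the argument above applies verbatim. This correction is exactly where the hypothesis $\Vol(B_1(x))\geq v$ for \emph{every} $x\in X$ is needed: it lets Theorem \ref{jcwsjan1thm17coro} and the volume convergence be used uniformly at points far from $p$, so that $\partialbar\alpha_l$ is genuinely $L^2$-negligible and $w_l$ can be taken with vanishing norm. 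I expect the two real obstacles to be (i) manufacturing the pointwise curvature inequality on $M_l$ from a hypothesis stated only for currents --- resolved, as above, via the plurisubharmonicity of $v_{i,j}$ and its regularization --- and (ii) controlling everything in an $\epsilon$-neighbourhood of $X_{sing}$, namely the extension of the limit solution across $X_{sing}$ and, in the second statement, the correction of $\partialbar\alpha_l$, which is where normality, the Minkowski-type estimate, and (for the second statement) global non-collapsing enter.
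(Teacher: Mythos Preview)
Your regularization step—recognizing that $v_{i,j}=\varphi_i+\theta_j-(\Lambda+c)\phi_\infty$ is plurisubharmonic on $X_{reg}$ and smoothing it to manufacture a genuine curvature inequality on $M_l$—is exactly the paper's first step; the paper implements it via Fornæss--Narasimhan and a regularized-max patching over local charts (formula \eqref{l2constructsmoothmetricformula}), but the content is the same.

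For the first assertion the paper takes a different route from yours. Rather than solve $\partialbar u_l=\partialbar\xi_l$ on $M_l$ and extract a limit, it proves the duality inequality \eqref{l2step2formula} for compactly supported test forms $\beta$ on $X_{reg}$, by transferring $\beta$ to $M_l$ and applying the Bochner--Kodaira--Nakano inequality on $B_{R'}(p_l)$ (where the regularized metric lives); Hahn--Banach and Riesz representation then produce $u$ directly on $X$. This bypasses the point you gloss as ``routine localization to a pseudoconvex domain'': your metric $H_{l,i,\delta}$ and its curvature bound exist only on a large ball in $M_l$, so Proposition~\ref{l2m} is not immediately applicable on the complete manifold, and a ball need not be pseudoconvex. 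Your approach can likely be repaired, but the duality argument avoids the issue entirely.

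There is a genuine gap in your treatment of the second assertion. You correctly see that $\alpha_l$ must be corrected to a $\partialbar$-closed form, but you do not say \emph{where} $\partialbar w_l=\partialbar\alpha_l$ is to be solved, and this is again the pseudoconvexity problem. The paper handles it by invoking the \emph{global} partial $C^0$-estimate—available precisely because $\Vol(B_1(x))\geq v$ holds for all $x$—to produce a peak section $s_{p_l}\in H^0_{L^2}(M_l,\linebundle_l^D)$; the function $\phi_l=\rho_{\linebundle_l^D,1}/\Vert s_{p_l}\Vert^2$ is then plurisubharmonic on $M_l\setminus Y_l$ and tends to $+\infty$ both near $Y_l=\{s_{p_l}=0\}$ and at infinity (Lemma~\ref{peaksecdecaylmm}), so its sublevel sets are Stein manifolds $U'_{l,\epsilon''}$ approximating $B_{\epsilon''^{-1}}(p_l)\setminus B_{\epsilon''}(Y_l)$, on which both H\"ormander corrections are performed. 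Your stated reason for needing uniform non-collapsing—that Theorem~\ref{jcwsjan1thm17coro} and volume convergence must be available far from $p$—is not the mechanism: after the radial cutoff $\chi_{R'}$, only $X_{sing}\cap B_{R'}$ matters, and that is controlled by Theorem~\ref{jcwsjan1thm17coro} at scale $R'$ with basepoint $p$. The global non-collapsing enters solely through the partial $C^0$-estimate and the resulting Stein exhaustion.
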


\begin{proof}
We divide our proof in three steps. Without loss of generality, we can assume that $\varphi =0 $. 

First, we need to regularize the singular Hermitian metric $ h'_\infty $. We divide it into two parts to approximate, using Demailly's regularization theorem \cite{bk1, dp1}, and approximating the other part by using K\"ahler potentials on $M_l$ as in the proof of Liu-Sz\`ekelihidi's version of $L^2$-estimate on tangent cone \cite[Proposition 3.1]{lggs2}.

Fix $R>0$. Choosing an open coverings $\{ B_r (y_i) \}_{i=1}^{N} $ of $B_R (p ) \subset X $, where $r>0$ is a constant. By the convergence of K\"ahler space structures, we can assume that there are sequences of points $ M_{l} \ni y_{i,l} \to y_i $ and sequences of K\"ahler potentials $\psi_{i,l}$ on $B_{2r} (y_{i,l}) \subset M_{l}$ for sufficiently large $l$, such that $|\psi_{i,l} | +|\nabla \psi_{i,l} | \leq 1 $, and $\psi_{i,l}$ converge to a K\"ahler potential $\psi_i$ on $B_{2r} (y_i) $, $i=1,\cdots ,N$. Since $\linebundle'_\infty $ is a limit line bundle, we can find a sequence of line bundles $\linebundle'_l $ on $M_l $ and a sequence of local frames $e_{i,l} \in H^0 \left( B_{2r} (y_{i,l} ) ,\linebundle'_l \right) $ such that the transition functions $f_{ij,l} = e_{i,l} e^{-1}_{j,l} $ converge to transition functions $f_{ij} = e_{i} e^{-1}_{j} \in \mathcal{O} \left( B_{2r} (y_i) \cap B_{2r} (y_j) \right) $ of $\linebundle'_\infty $, where $e_i \in H^0 \left( B_{2r} (y_i ) ,\linebundle'_\infty \right) $ is a local frame. Note that we can shrink the value of $r$ if necessary. Let $-\log h'_\infty \left( e_i ,e_i \right) =\phi_i $. Then $\rho_i = \phi_i - (\Lambda +c )\psi_i $ is a continuous plurisubharmonic function on $B_{2r} (y_i) $, $i=1,\cdots ,N$, and the difference $\rho_i -\rho_j $ is pluriharmonic on each $B_{2r} (y_i) \cap B_{2r} (y_j) \neq\emptyset $. 

By the convergence of complex structures, by shrinking the radius $r>0$, we can assume that there exists a sequence of holomorphic maps $F_{i,l} = (z_1 ,\cdots ,z_{N_i} ) : B_{2r} (y_{i,l}) \to \mathbb{C}^{N_i } $ converge to an injective holomorphic map $F_i = (z_1 ,\cdots ,z_{N_i} ) : B_{2r} (y_i) \to \mathbb{C}^{N_i } $ for each $i$. Let $\delta >0$ be a small constant. Then $ U_{i,x,t} = F_i^{-1} \left( B_{t\delta} \left( F_i (x) \right) \right) \subset B_{2r} (y_i ) $ are Stein space, $\forall t\in (0,1]$, $\forall x\in B_{\frac{7r}{4}} (y_i ) $, $\forall i =1,\cdots ,N$. Similarly, set $ U_{i,x,t,l} = F_{i,l}^{-1} \left( B_{t\delta} \left( F_{i,l} (x) \right) \right) \subset B_{2r} (y_{i,l} ) $ for $ x\in B_{\frac{7r}{4}} (y_{i,l} ) $. 

By Forn{\oe}ss-Narasimhan \cite[Theorem 5.5]{fm1} we have smooth plurisubharmonic functions $\rho_{i,x} $ on a neighborhood of $F_{i} \left( U_{i,x,1} \right) $ such that $| \rho_{i,x} \circ F_i - \rho_i | \leq \delta^{10} $ on $U_{i,x,1}$. We can find $N'$ points $\{ x_k \}_{k=1}^{N'} \subset B_R (p) $ satisfying that $x_k \in B_{\frac{7r}{4}} (y_{i_k} ) $ for some $i_k $, and $B_R (p) \subset \cup_{k=1}^{N'} U_{i_k ,x_k ,\frac{1}{5} } $. Choosing a sequence of points $ M_l \ni x_{k,l} \to x_k $ for each $k$. Then we have $B_R (p_l ) \subset \cup_{k=1}^{N'} U_{i_k ,x_{k,l} ,\frac{1}{4} ,l } $ for sufficiently large $l$. On each $U_{i_k ,x_k ,1,l} \cap B_{\frac{3r}{2}} (y_{i,l} ) \neq\emptyset $, we define

\begin{eqnarray*}
\varrho_{i,k,l} (w) & = & \rho_{i_k ,x_k } \circ F_{i_k ,l} (w) -\delta^2 \left| F_{i_k ,l} (w) -F_{i_k ,l} ( x_k ) \right|^2 \\
& & - 2\log |f_{i,i_k ,l} (w) | +(\Lambda +c) \left( \psi_{i,l} (w) -\psi_{i_k ,l} (w) \right) ,
\end{eqnarray*} 
$\forall w\in U_{i_k ,x_k ,1,l} \cap B_{\frac{3r}{2}} (y_{i,l} ) .$  Now we construct functions $ \widetilde{\rho}_{i,l} $ on $B_{\frac{3r}{2}} (y_{i,l} )$ as following:
$$ \widetilde{\rho}_{i,l} (w) = \max_{ U_{i_k ,x_k ,1,l} \ni w } \varrho_{i,k,l} (w),\; \forall w\in B_{\frac{3r}{2}} (y_{i,l} ). $$
Clearly, we have 
$$ \varrho_{i,k,l} \to \varrho_{i,k,\infty } = \rho_{i_k ,x_k } \circ F_{i_k} -\delta^2 \left| F_{i_k } -F_{i_k } ( x_k ) \right|^2 +\rho_{i} -\rho_{i_k}   $$ 
as $l\to\infty $, and hence $| \rho_{i_k ,x_k } \circ F_i - \rho_{i_k} | \leq \delta^{10} $ implies that
$$\sup_{w \notin U_{i_k ,x_k ,\frac{2}{3},l} } \widetilde{\rho}_{i,l} (w) \leq \inf_{w \in U_{i_{k'} ,x_{k'} ,\frac{1}{3},l} } \widetilde{\rho}_{i,l} (w) -\frac{\delta^4}{20} ,\; \forall k,k' =1,\cdots ,N' , $$
for sufficiently large $l$. It follows that 
$$\sqrt{-1} \partial\partialbar \widetilde{\rho}_{i,l} \geq -\delta^2 \sum_{k} F^{*}_{i_k} \omega_{Euc} \geq -C \delta^2 \omega_l ,$$ 
where $C>0$ is a constant independent of $l$ and $\delta $. See \cite[Lemma I-5.17]{dm1}. When $B_{\frac{3r}{2}} (y_{i,l}) \cap B_{\frac{3r}{2}} (y_{i',l}) \cap U_{i_k,x_k,1,l} \neq\emptyset $, we have 
$$\varrho_{i,k,l} -\varrho_{i' ,k,l} = 2\log |f_{i',i,l}|^2 - ( \Lambda +c ) (\psi_{i',l} -\psi_{i,l} ) $$ 
on $B_{\frac{3r}{2}} (y_{i,l}) \cap B_{\frac{3r}{2}} (y_{i',l}) \cap U_{i_k,x_k,1,l} .$ 
Then 
$$\widetilde{\rho}_{i,l} -\widetilde{\rho}_{i',l} = \max_{ k } \varrho_{i,k,l} - \max_{ k } \varrho_{i' ,k,l} = 2\log |f_{i',i,l}|^2 - ( \Lambda +c ) (\psi_{i',l} -\psi_{i,l} ) ,$$ 
on $B_{\frac{3r}{2}} (y_{i,l}) \cap B_{\frac{3r}{2}} (y_{i',l}) $. If we use regularized max function \cite[Lemma I-5.18]{dm1} instead of maximum in the above, we can get smooth $ \widetilde{\rho}_{i,l} $.

A straightforward calculation shows that 
\begin{equation}
\label{l2constructsmoothmetricformula}
h'_{l,\delta } (e_{i,l} ,e_{i,l} ) = e^{-(\Lambda +c)\psi_{i,l} -\widetilde{\rho}_{i,l} } ,\;\forall i=1,\cdots ,N,
\end{equation}
gives a smooth Hermitian metric $h'_{l,\delta } $ of $\linebundle'_l $ on $\cup_{i} B_{\frac{3r}{2}} (y_{i,l})\supset B_R (p_l ) $, and $\Ric (h'_{l,\delta } ) \geq (\Lambda +c - C\delta^2 )\; \omega_l . $ By choosing a suitable sequence of radii $R$ and constants $\delta $, we can construct a sequence of smooth Hermitian metric $\widetilde{ h}'_{l} $ of $\linebundle'_l $ on $ B_{R_l} (p_l ) $ such that $\lim_{l\to\infty} R_l =\infty $, $\Ric ( \widetilde{ h}'_{l} )\geq (\Lambda +c -\Psi (l^{-1} ) )\; \omega_l , $ and $\widetilde{ h}'_{l} \to h'_\infty $ as $l\to\infty $.
 
Another step in the proof is to establish the $L^2 $ estimate for $\partialbar\xi $. We can follow the argument in \cite[Proposition 3.1]{lggs2} here. This step can be reduced to prove that for any smooth $\linebundle'^{-1}_\infty $-valued $(n,n-1)$-form $\beta $ with compact support in $X_{reg} $, we have
\begin{equation}
\label{l2step2formula}
\left( \int_{X} \left\langle \xi ,\star \partialbar\beta \right\rangle d\mu \right)^2 = \left( \int_{X} \xi \wedge \partialbar\beta \right)^2 \leq c^{-1} \left( \int_X \left\Vert \alpha \right\Vert^2 d\mu \right) \left( \int_X \left\Vert \star \partialbar\beta \right\Vert^2 d\mu \right) .
\end{equation}

By abuse of notation, we will denote by $\{ B_{10 r_i} (y_i ) \}_{i=1}^{\infty}$ a locally finite open covering of $X_{reg}$. For each $i$, choose a sequence of points $ M_l \ni y_{i,l} \to y_i $. Without loss of generality, we can assume that $\{ B_{ r_i} (y_i ) \}_{i=1}^{\infty}$ is also an open covering of $X_{reg} $, and there are holomorphic charts $F_{i,l} : B_{10r_i} (y_{i,l} ) \to \mathbb{C}^n $ converging to holomorphic charts $F_{i } : B_{10r_i} (y_{i } ) \to \mathbb{C}^n $ as $l\to\infty $. Since $( \widetilde{\linebundle}'_{i,l} , \widetilde{h}'_{i,l} ) \to ( \linebundle'_\infty ,h'_{\infty} ) $, it follows that there are local frames $e'_{i,l} \in H^0 \left( B_{10r_i} (y_{i,l} ) ,\widetilde{\linebundle}'_{l} \right) $ such that the norms $\Vert e'_{i,l} \Vert $ and transition functions $ e'_{i,l} e'^{-1}_{i',l} $ converge to the norms $\Vert e'_{i } \Vert $ and transition functions $ e'_{i } e'^{-1}_{i' } $, where $e_{i } \in H^0 \left( B_{10r_i} (y_{i } ) , \linebundle' \right) $ are local frames. Note that we can choose smaller balls instead of $B_{10r_i} $ if necessary. Set $\xi =\xi_i e'_i $ on $B_{10r_i} (y_{i } ) $. Then $\xi_{i,l} = \xi_{i} \circ F_{i}^{-1} \circ F_{i,l} \in C^\infty \left( B_{10r_i} (y_{i,l} ) \right) $ for sufficiently large $l $. Similarly, we can define $\beta_{i,l} $. As in the proof of Lemma \ref{lmmlowercontinuitybergman}, we have cut-off functions $\eta_{i,l} \in C^\infty_c \left( B_{10r_i} (y_{i,l} ) \right) $, such that $\sum_{i} \eta_{i,l} =1 $ on $B_{9r_j} (y_{j,l} ) $ for any fixed $j$ and sufficiently large $l$, and $|\nabla \eta_{i,l} |\leq C $, where $C$ is a constant independent of $l$. For simplicity of notation, we use the same latter $C$ for large constants independent of $l $.

Fix a large radius $R'>10$ and a small constant $\epsilon >0 $. Hence we have $N_{\epsilon ,R'} $ points $\{ x'_{k } \} \subset X_{sing} $ such that $B_{100^{-1}\epsilon } (x'_{k}) $ are disjoint, and $ X_{sing} \cap B_{R' } (p) \subset \cup_k B_{50^{-1} \epsilon} (x'_k) $. Choosing sequences of points $M_l \ni x'_{k,l } \to x'_k $. Then we can construct cut-off functions $\chi_{R' ,l} \in C^\infty_c \left(  B_{ R' } (p_l ) \right) $ and $\eta_{\epsilon ,l } \in C^\infty_c \left( \cup_k B_{ \epsilon} (x'_{k,l} ) \right) $ for sufficiently large $l$, such that $\chi_{R' ,l} =1 $ on $B_{2^{-1} R'} (p_l ) $, $\eta_{\epsilon ,l} =1 $ on $\cup_k B_{2^{-1} \epsilon} (x'_{k,l} ) $, $|\nabla \chi_{R' ,1} | \leq CR'^{-1} $, and $|\nabla \eta_{\epsilon ,l} | \leq C\epsilon^{-1} $. Let $\xi_{l} = \sum_{i} \eta_{i,l} \xi_{i,l} e'_{i,l} $ and $\beta_{l} = \sum_{i} \eta_{i,l} \beta_{i,l} e'^{-1}_{i,l} $ on $B_{R'} (p_l ) \sq \cup_k B_{10^{-1} \epsilon} (x'_{k,l}) $. Since the support of $\beta $ is compact in $X_{reg}$, we can assume that $\mathrm{supp} \beta_{l} \Subset B_{10^{-1}R'} (p_l ) \sq \cup_k B_{10 \epsilon} (x'_{k,l}) $. Then we decompose $ (\star\beta_l )=\nu_l = \nu_{1,l} + \nu_{2,l}  $ on $B_{R'} (p_l ) $ under the $L^2$ orthogonal decomposition $( \ker \partialbar ) \bigoplus ( \ker \partialbar )^\perp $. By the Bochner-Kodaira-Nakano inequality \cite[Formula VII-(2.1)]{dm1}, we have $\star \partialbar\beta_l = -\partialbar^* \nu = -\partialbar \nu_{1,l} $, and hence
\begin{eqnarray*}
\left( \int_{B_{ R'} (p_l ) } \left\langle \xi_l ,\star \partialbar\beta_l \right\rangle d\mu \right)^2 & = & \left( \int_{B_{ R'} (p_l ) } \left\langle \partialbar \left( \chi_{R'} (1-\eta_{\epsilon ,l}) \xi_l \right) , \nu_{1,l} \right\rangle d\mu \right)^2 \\
& \leq & \left(  \left\Vert (1-\eta_{\epsilon ,l}) \partialbar\xi_l \right\Vert_{L^2}^2 +CR'^{-1} +\Psi (\epsilon | R') \right) \cdot \left\Vert \chi_{R'} \nu_{1,l} \right\Vert_{L^2}^2 \\
& \leq & \left( c^{-1} -\Psi(l^{-1}) \right) \left(  \left\Vert (1-\eta_{\epsilon ,l}) \partialbar\xi_l \right\Vert_{L^2}^2 +CR'^{-1} +\Psi (\epsilon | R') \right) \\
& & \cdot \left( \left\Vert \partialbar\left( \chi_{R'} \nu_{1,l} \right) \right\Vert_{L^2}^2 + \left\Vert \partialbar^* \left( \chi_{R'} \nu_{1,l} \right) \right\Vert_{L^2}^2 \right) \\
& \leq & \left( c^{-1} -\Psi(l^{-1}) \right) \left(  \left\Vert (1-\eta_{\epsilon ,l}) \partialbar\xi_l \right\Vert_{L^2}^2 +CR'^{-1} +\Psi (\epsilon | R') \right) \\
& & \cdot \left( \left\Vert \partialbar^*  \nu_{l} \right\Vert_{L^2}^2  + CR'^{-1}\right) .
\end{eqnarray*}
Note that $X_{sing} \cap B_{2R'} (p_\infty ) \subset \cap_{r\in (0,1) } \mathcal{S}^{n-2}_{\epsilon' ,r } (X) $ for some constant $\epsilon'>0 $. Letting $l\to\infty $. By choosing suitable constants $R'\to\infty $ and $\epsilon\to 0$ as $l\to\infty $, we get the formula (\ref{l2step2formula}). By using the Hahn-Banach and Riesz representation theorems, we can find a solution $u$ of this equation $\partialbar (u-\xi )=0$ in the $L^2$ closure of the vector space spanned by $\{ \star \partialbar \beta \}_{\beta} $ satisfying (\ref{l2propformula}). 

Finally, we have to show that (\ref{l2propformula}) holds for all smooth $\linebundle'_\infty $-valued $(0,1)$-form $\alpha\in L^2 $ when the non-collapsing condition $\Vol (B_1 (x) ) \geq v $ holds for all $x\in X$. The idea is to construct a sequence of smooth sections on $M_l $ to converge to a solution on $X$. By the partial $C^0 $-estimate, we can find a positive integer $D\in\mathbb{N} $ such that $\rho_{\linebundle^D_l ,1 } \geq b>0 $ for all $l$, where the Hermitian line bundles $( \linebundle_l ,h_l )$ are the polarizations of $\left( M_l ,\omega_l \right) $, and $b>0 $ is a constant independent of $l$. 

Let $s_{p_l} \in H_{L^2}^0 \left( M_l ,\linebundle_l^D \right) $ such that $\Vert s_{p_l} \Vert_{L^2} =1 $ and $\rho_{\linebundle^D_l ,1 } = \Vert s_{p_l} (p_l ) \Vert^2 $. Set $Y_l =\{ s_{p_l } =0 \} \subset M_l $. It is clear that $\Vert s_{p_l} \Vert^2 $ converge to a continuous function $\Vert s_{\infty } \Vert^2 $ on $X$, $Y_l $ converge to a subvariety $Y_\infty = \{ s_{\infty } =0 \} \subset X $, where $s_\infty \in H_{L^2}^0 \left( X ,\linebundle_\infty^D \right) $. For each $l\in\mathbb{N}$, define $\phi_l = \frac{\rho_{\linebundle^D_l ,1 }}{\Vert s_{p_l} \Vert^2 } $. Then $\phi_l $ is a plurisubharmonic function on $M_l \sq Y_l $ for each $l$, and $\phi_l $ tends to $\infty $ as it approaches $Y_l $. By Lemma \ref{peaksecdecaylmm}, we see that $\lim_{\dist (x, p_l) \to\infty } \phi_l (x) =\infty $. It follows that for each constant $\epsilon'' >0$, there are Stein manifolds $U'_{l,\epsilon'' } \subset M_l \sq Y_l $ for sufficiently large $l$, satisfying that $U'_{l,\epsilon'' } $ containing $ B_{\epsilon''^{-1}} (p_l ) \sq B_{\epsilon''} (Y_l ) $, and $U'_{l,\epsilon'' } \subset B_{\epsilon'''^{-1}} (p_l ) \sq B_{\epsilon'''} (Y_l ) $ for some constant $\epsilon''' =\epsilon''' (\epsilon'' ) >0 $.

By the cut-off functions $\eta_{i,l} $ in the second step, we can define smooth $\widetilde{\linebundle}_l $-valued $(0,1)$-forms $\alpha_l $ on $B_{R'} (p_l ) \sq \cup_{k} B_{10^{-1} \epsilon } (x'_{k,l}) $ satisfying that $\alpha_l \to\alpha $ as $l\to\infty $, and $\left\Vert \partialbar \alpha_l \right\Vert_{L^2} \leq \Psi (l^{-1} | R' ,\epsilon ) $.

Apply the H\"ormander's $L^2$ estimate to $ \left( \chi_{R'} (1-\eta_{\epsilon ,l} ) \alpha_l \right) $, we can find a sequence of smooth $\widetilde{\linebundle}_l $-valued $(0,1)$-forms $\alpha'_l $ on $U'_{l,\epsilon'' } $ such that $\left\Vert \alpha'_l - \chi_{R'} (1-\eta_{\epsilon ,l} ) \alpha_l \right\Vert_{L^2} \leq \Psi (l^{-1} | R' ,\epsilon ) +\Psi (\epsilon |R') +CR'^{-1} $, and $\partialbar \alpha'_l =0 $. By using the H\"ormander's $L^2$ estimate again, one can obtain smooth $\widetilde{\linebundle}_l $-valued functions $u_{R' ,\epsilon ,l} $ on $U'_{l,\epsilon'' } $ satisfying that $\partialbar u_{R' ,\epsilon ,l} = \alpha'_l $ and $\Vert u_{R' ,\epsilon ,l} \Vert_{L^2}^2 \leq \left( c^{-1} +\Psi (l^{-1} ) \right) \left\Vert \alpha'_l \right\Vert^2_{L^2} $. Letting $l\to\infty $. By choosing suitable constants $R'\to\infty $, $\epsilon\to 0$ and $\epsilon''\to 0$ as $l\to\infty $, we can assume that $u_{R' ,\epsilon ,l} $ has a subsequence converging to an $\widetilde{\linebundle}_l $-valued function $u\in L^2 $ on $X_{reg} \sq Y_\infty $ in the $L_{loc}^2$ sense. Note that by $F_{i,l}\to F_i $, we can consider the convergence locally on a single holomorphic chart. Then we have $\partialbar u = \alpha $ on $X_{reg} \sq Y_\infty $, and $\Vert u \Vert_{L^2}^2 \leq  c^{-1}  \left\Vert \alpha \right\Vert^2_{L^2} $. Hence $u$ is smooth on $X_{reg} \sq Y_\infty $. Clearly $\partialbar \alpha =0 $ implies that for each $x\in X$, we can find a smooth $\widetilde{\linebundle}_l $-valued function $u_x \in L^2 $ around $x$, satisfying $\partialbar u_x =\alpha $. Since $Y_\infty $ is a subvariety of $X$, $u\in L^2 $ and $\partialbar u=\alpha $ show that $u$ can be extended to a smooth $\widetilde{\linebundle}_l $-valued function on $X$, satisfying $\partialbar u =\alpha $ \cite[Proposition 1.14]{oh1}. It remains to prove that $\mu \left( Y_\infty \right) =0 $ now. It follows form $n! d\mu = \omega^n_x = \left( -\frac{\sqrt{-1}}{2\pi } \partial\partialbar \log h_\infty \right)^n $, and the proof is complete.
\end{proof}

In the above argument, we used the following lemma.

\begin{lmm}
\label{peaksecdecaylmm}
Given $b ,v >0$ and $n\in\mathbb{N}$, there exists a constant $\epsilon_0 >0 $ with the following property. Let $(M,\omega )$ be an $n$-dimensional complete K\"ahler manifold, $L$ be a positive line bundle on $M$ equipped with a hermitian metric $h$ whose curvature form is $2\pi \omega $, and $s_x $ is the peak section at $x\in M$ (See Lemma \ref{lmmlowercontinuitybergman}). Suppose that $\rho_{\omega ,1} (x)\geq b$, $\Ric(\omega ) \geq - \omega $ and $  \Vol\left( B_{1} (x) \right) \geq v $. Then
$$ \int_{M \sq B_{\epsilon^{-1}} (x) } \left\Vert s_x \right\Vert^2 dV_{\omega } \leq \Psi \left( \epsilon \big| n,b ,v \right) ,\;\; \forall \epsilon \in \left( 0, \epsilon_0 \right). $$
\end{lmm}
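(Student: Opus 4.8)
The plan is to compare the peak section $s_x$ with a genuine holomorphic section obtained by truncating $s_x$ to a large ball and correcting the resulting $\partialbar$-error by H\"ormander's $L^2$ estimate; the key is that the correction term is tiny near $x$, so the truncated section retains almost all of the value $\rho_{\omega,1}(x)$ at $x$, hence (being a competitor for the Bergman kernel) almost all of the $L^2$ mass. Normalize $\|s_x\|_{L^2}=1$, so $\|s_x(x)\|^2=\rho_{\omega,1}(x)\in[b,C(n)/v]$, the upper bound by the sub--mean value estimate. Two preliminary observations: (i) since $\linebundle$ is a line bundle, $\linebundle_x\cong\mathbb{C}$, so the evaluation $\mathrm{ev}_x:H^0_{L^2}(M,\linebundle)\to\linebundle_x$ has one-dimensional image (onto, as $\rho_{\omega,1}(x)>0$), whence $\{s:s(x)=0\}$ has codimension one and $s_x$ spans its orthogonal complement; consequently every $u\in H^0_{L^2}(M,\linebundle)$ satisfies $u(x)=\langle u,s_x\rangle\,s_x(x)$, i.e. $|\langle u,s_x\rangle|=\|u(x)\|/\|s_x(x)\|$; (ii) by the Bochner identity $\Delta\|t\|^2=\|Dt\|^2-4n\pi\|t\|^2$ and Moser iteration, exactly as in Lemma \ref{sobolevlmm} (whose proof is local, needing only holomorphy on $B_2(x)$), any $t$ holomorphic on $B_2(x)$ obeys $\|t(x)\|^2\le \frac{C(n)}{\Vol(B_1(x))}\int_{B_1(x)}\|t\|^2\,\omega^n$.

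Next I would fix $R\ge 2$ and a smooth cut-off $\chi_R$ with $\chi_R\equiv1$ on $B_R(x)$, $\chi_R\equiv0$ off $B_{2R}(x)$, $0\le\chi_R\le1$ and $|\nabla\chi_R|\le C/R$, and set $\zeta:=\partialbar(\chi_R s_x)=(\partialbar\chi_R)s_x$, a $\partialbar$-closed $\linebundle$-valued $(0,1)$-form supported in the annulus $B_{2R}(x)\setminus B_R(x)$ with $\|\zeta\|_{L^2}^2\le CR^{-2}\int_M\|s_x\|^2\,\omega^n=CR^{-2}$. Since $\Ric(\omega)+\Ric(h)\ge(2\pi-1)\omega$ with $2\pi-1>0$ and $M$ is complete, Proposition \ref{l2m} (with trivial weight) produces $w_R$ with $\partialbar w_R=\zeta$ and $\|w_R\|_{L^2}^2\le(2\pi-1)^{-1}\|\zeta\|_{L^2}^2\le CR^{-2}$. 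Then $t_R:=\chi_R s_x-w_R$ is holomorphic and lies in $H^0_{L^2}(M,\linebundle)$; moreover $\zeta\equiv0$ on $B_R(x)$, so $w_R$ is holomorphic there, $t_R(x)=s_x(x)-w_R(x)$, and by (ii) $\|w_R(x)\|^2\le \frac{C(n)}{v}\|w_R\|_{L^2}^2\le \frac{C(n)}{vR^2}$.

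Then I would assemble the estimate. Applying (i) to $u=s_x-t_R$: since $(s_x-t_R)(x)=w_R(x)$ and $\|s_x(x)\|^2\ge b$, $|\langle s_x-t_R,s_x\rangle|=\|w_R(x)\|/\|s_x(x)\|\le C(n)/(R\sqrt{vb})$, hence $|\langle t_R,s_x\rangle|\ge 1-C(n)/(R\sqrt{vb})$ and, by Cauchy--Schwarz, $\|t_R\|_{L^2}^2\ge 1-2C(n)/(R\sqrt{vb})$. On the other hand, using $\chi_R^2\le\chi_R\le1$, $\mathrm{supp}\,\chi_R\subset B_{2R}(x)$ and $\|w_R\|_{L^2}\le C/R$, $\|t_R\|_{L^2}^2\le \int_{B_{2R}(x)}\|s_x\|^2\,\omega^n+C/R$. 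Comparing the two bounds gives $\int_{B_{2R}(x)}\|s_x\|^2\,\omega^n\ge 1-\tilde C(n,v,b)/R$, i.e. $\int_{M\setminus B_{2R}(x)}\|s_x\|^2\,\omega^n\le \tilde C(n,v,b)/R$. Taking $2R=\epsilon^{-1}$ and choosing $\epsilon_0$ small enough that $R\ge\max\{2,\,2C(n)/\sqrt{vb}\}$ for all $\epsilon<\epsilon_0$, this is the assertion with $\Psi(\epsilon\,|\,n,b,v)=2\tilde C(n,v,b)\,\epsilon$.

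I do not expect a serious obstacle once $t_R$ is in place; the two delicate points are that H\"ormander's estimate must be applicable — i.e. the curvature $2\pi\omega$ of $\linebundle$ must dominate the Ricci lower bound $-\omega$, which is exactly the borderline phenomenon behind the hypothesis $2\pi m>\Lambda$ in Theorem \ref{thmcontinuousbergmankernel} — and that the global $L^2$-smallness of the correction $w_R$ must be upgraded to pointwise smallness of $w_R(x)$, which is where the non-collapsing bound $\Vol(B_1(x))\ge v$ and the interior Moser estimate are used. (Alternatively one could argue by contradiction via a Gromov--Hausdorff limit, where by Theorem \ref{thmcontinuousbergmankernel} the peak sections $s_{x_l}$ would converge to a peak section of strictly smaller $L^2$-norm, an impossibility; but the direct construction above is cleaner and avoids the limiting machinery.)
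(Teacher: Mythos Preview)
Your proof is correct and follows essentially the same strategy as the paper's: truncate $s_x$ by a cutoff, correct the $\partialbar$-error with H\"ormander's $L^2$ estimate (using $\Ric(h)+\Ric(\omega)\ge(2\pi-1)\omega>0$), and then exploit the extremal property of the peak section to force most of the mass into the ball. Your pointwise bound on the correction $w_R(x)$ via the sub-mean-value inequality (noting $w_R$ is holomorphic on $B_R(x)$) is in fact slightly cleaner than the paper's route through the gradient estimate of Lemma~\ref{sobolevlmm}, and your use of the reproducing identity $|\langle u,s_x\rangle|=\|u(x)\|/\|s_x(x)\|$ is equivalent to the paper's variational inequality $\|s\|_{L^2}^2\ge\|s(x)\|^2/\|s_x(x)\|^2$.
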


\begin{proof}
Choose a cut-off function $\psi :\mathbb{R} \to \mathbb{R} $ such that $\psi (t)=1 $ for $t\leq \frac{1}{2} $, $\psi (t)=0 $ for $t\geq 1 $, and $-5\leq -\psi' \leq 0 $. By smoothing $ \psi \left( \epsilon d_x \right) $, where $d_x$ is the distance function from $x$, we can find a function $\eta\in C^{\infty} (M)$, such that $\eta =1 $ on $B_{\frac{1}{2\epsilon} } (x) $, $\eta =0 $ on $M\sq B_{\epsilon^{-1}} (x) $, and $|\nabla \eta | \leq 10\epsilon$. It follows that $$\int_{M } \left\Vert \partialbar \left( \eta s_x \right) \right\Vert^2 dV_{\omega } \leq 100 \epsilon^2 \int_{M } \left\Vert s_x \right\Vert^2 dV_{\omega } \leq 100 \epsilon^2 .$$
By H\"ormander's $L^2$ estimate, there exists a smooth $L$-valued section $u$ such that $ \partialbar u =\partialbar \left( \eta s_x \right) $, and
\begin{eqnarray}
\label{uintegralsmall}
\int_{M } \left\Vert u \right\Vert^2 dV_{\omega }  \leq  \int_{M } \left\Vert \partialbar \left( \eta s_x \right) \right\Vert^2 dV_{\omega } \leq 100 \epsilon^2 .
\end{eqnarray}
Let $s=\eta s_x - u \in H^0 \left( M,L \right) $. Then Lemma \ref{sobolevlmm} shows that there exists a positive constant $C = C (n,\Lambda ,v)$ such that $\left\Vert s_x \right\Vert +\left\Vert Ds \right\Vert +\left\Vert Ds_x \right\Vert \leq C $ on $B_1 (x)$, and hence we can conclude that $\left\Vert Du \right\Vert = \left\Vert d\eta \otimes s_x + \eta Ds_x -Ds \right\Vert \leq C $ on $B_1 (x)$. Thus (\ref{uintegralsmall}) implies that $\left\Vert u(x) \right\Vert \leq \Psi \left( \epsilon \big| n,\Lambda ,v \right) $. By definition of $s_x$, we have
\begin{eqnarray*}
\int_{M} \left\Vert s \right\Vert^2 dV_{\omega } & \geq & \frac{\left\Vert s (x) \right\Vert^2}{\left\Vert s_x (x) \right\Vert^2 } \\
& \geq &  \left( \frac{ \left\Vert s_x (x) \right\Vert - \left\Vert u(x) \right\Vert }{\left\Vert s_x (x) \right\Vert } \right)^2 \\
& \geq &  1 - \Psi \left( \epsilon \big| n,b ,v \right).
\end{eqnarray*}
Since $s=\eta s_x - u $, it follows that
\begin{eqnarray*}
\left( \int_{M} \left\Vert s \right\Vert^2 dV_{\omega } \right)^{\frac{1}{2}} & \leq & \left( \int_{M } \left\Vert \eta s_x \right\Vert^2 dV_{\omega } \right)^{\frac{1}{2}} + \left( \int_{M } \left\Vert u \right\Vert^2 dV_{\omega } \right)^{\frac{1}{2}} \\
& \leq &  \left( 1- \int_{M \sq B_{\epsilon^{-1}} (x) } \left\Vert s_x \right\Vert^2 dV_{\omega } \right)^{\frac{1}{2}} + 10\epsilon .
\end{eqnarray*}
Consequently, $$1- \int_{M \sq B_{\epsilon^{-1}} (x) } \left\Vert s_x \right\Vert^2 dV_{\omega } \geq 1 - \Psi \left( \epsilon \big| n,b ,v \right) ,$$
and the lemma follows.
\end{proof}

We conclude this section by pointing out that if we relax the requirement for constants, then the line bundle $\linebundle'_\infty $ in Proposition \ref{propl2estimateghlimit} need not be a limit line bundle under the additional conditions of orthogonal bisectional curvature lower bound and non-collapsing everywhere. Let $ (X,d,p)  $ be the pointed Gromov-Hausdorff limit of a sequence of pointed complete K\"ahler manifolds $\left( M_l ,\omega_l ,p_l \right) $ with $\Ric \left( \omega_l \right) \geq -\Lambda \omega_l $ and $OB\geq -\Lambda $, $\forall l\in\mathbb{N} $, where $\Lambda >0$ is a constant. Assume that $\Vol \left( B_1 \left( x \right) \right) \geq v $, $ \forall x\in X $, where $ v>0$ is a constant.

\begin{coro}
\label{corol2thmlimit}
Let $\linebundle'_\infty $ be a line bundle on $X$ with a continuous Hermitian metric $h'_\infty $. Then there are constants $C_1 ,C_2 >0 $ with the following property. Suppose that $ \varphi \in L^1_{loc} (X) $ can be approximated by a decreasing sequence of smooth function $\left\lbrace \varphi_i \right\rbrace_{i=1}^{\infty} $ on $X$ satisfying the conditions in Proposition \ref{propl2estimateghlimit} with constant $C_1$. Then for any $\linebundle'_\infty $-valued smooth function $\xi \in L^2$ on $X$, there exists an $\linebundle'_\infty $-valued function $u\in L^{2}$ such that $\partialbar u= \partialbar \xi $ and 
\begin{equation}
\label{l2propformulacoro}
\int_{X} \Vert u \Vert^{2} e^{-\varphi } d\mu \leq C_2 \int_{X} \Vert \partialbar \xi \Vert^{2} e^{-\varphi } d\mu .
\end{equation}
\end{coro}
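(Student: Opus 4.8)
The plan is to reduce Corollary \ref{corol2thmlimit} to the argument of Proposition \ref{propl2estimateghlimit} (its first two steps), the new difficulty being that the $M_l$ are no longer assumed polarized, so the sequence of line bundles on $M_l$ approximating $\linebundle'_\infty$ must be produced by hand rather than taken as given. The ingredient that replaces the polarization is Liu-Sz\'ekelyhidi's structure theory under the orthogonal bisectional curvature bound \cite{lggs2}: with $OB \geq -\Lambda$ and $\Vol(B_1(x)) \geq v$ for every $x \in X$, the limit $X$ is a normal complex space, the complex structures of $M_l$ converge to that of $X$ (together with converging local embeddings into $\mathbb{C}^{N}$), and the metric inverses $g^{j\bar k}_l$ converge on $X_{reg}$. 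Given these, over any small ball $B_{r_i}(y_i) \subset X$ on which $\linebundle'_\infty$ is holomorphically trivial, the holomorphic transition functions of $\linebundle'_\infty$ can be pulled back through the converging embeddings to $B_{r_i}(y_{i,l}) \subset M_l$, yielding holomorphic functions that converge to those transition functions.

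First I would fix a radius $R$, cover $B_R(p)$ by finitely many such balls, and carry out this transplantation. The resulting local transition functions on $M_l$ converge to those of $\linebundle'_\infty$ but only form an approximate \v{C}ech $1$-cocycle; correcting them by a $\Psi(l^{-1})$-small factor yields an honest holomorphic line bundle $\widetilde{\linebundle}'_l$ on a ball $B_{R_l}(p_l)$ with $R_l \to \infty$. Then, exactly as in the first step of the proof of Proposition \ref{propl2estimateghlimit} — Demailly's regularization, Forn{\oe}ss-Narasimhan smoothing of the local plurisubharmonic potentials of $h'_\infty$ on the Stein pieces, and patching with the regularized maximum — I would build a smooth Hermitian metric $\widetilde{h}'_l$ on $\widetilde{\linebundle}'_l$ with $\widetilde{h}'_l \to h'_\infty$ and
$$ \Ric\bigl( \widetilde{h}'_l \bigr) \geq \bigl( C_1 + c - C\delta^2 - \Psi(l^{-1}) \bigr)\, \omega_l , $$
feeding in the hypothesis with $C_1$ large enough that the distortion between $\omega_X$ and the Euclidean metrics in the charts — which, near $X_{sing}$, is not uniformly close to $1$ — is absorbed. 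With $(\widetilde{\linebundle}'_l, \widetilde{h}'_l)$ at hand, the $L^2$ estimate on $M_l$ is obtained by duality, as in the second step of the proof of Proposition \ref{propl2estimateghlimit}: test $\partialbar\xi$ against smooth $(n,n-1)$-forms valued in the dual of $\widetilde{\linebundle}'_l$, decompose $\star\beta_l$ orthogonally into $\ker\partialbar$ and its complement, apply the Bochner-Kodaira-Nakano inequality, and pass to the limit over suitable $R' \to \infty$ and $\epsilon \to 0$, using the Cheeger-Jiang-Naber estimate (Theorem \ref{jcwsjan1thm17coro}) to make the cut-offs near $X_{sing}$ negligible in $L^2$ and Ohsawa's removable singularity theorem \cite{oh1} to push the solution across the $\mu$-null set $X_{sing}$. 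This produces $u \in L^2$ with $\partialbar u = \partialbar\xi$ and $\int_X \|u\|^2 e^{-\varphi} d\mu \leq C_2 \int_X \|\partialbar\xi\|^2 e^{-\varphi} d\mu$.

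The main obstacle is the construction of $(\widetilde{\linebundle}'_l, \widetilde{h}'_l)$: since the $M_l$ are not polarized, one must manufacture the approximating line bundles and metrics, verify that the corrected cocycle really glues over a ball whose radius tends to infinity, and — most delicately — check that the regularized metric retains a curvature lower bound of the shape $(C_1 + c - \Psi(l^{-1}))\,\omega_l$ after all the local-to-global patching. This is where the orthogonal bisectional curvature bound and the everywhere non-collapsing are indispensable: they provide the converging local embeddings, the volume control on neighborhoods of $X_{sing}$, and the uniform pointwise bounds on holomorphic sections and forms (as in Lemma \ref{sobolevlmm}) needed in the limiting step. It is also the reason the constants must be relaxed — the comparison constants between $\omega_X$ and the flat models in the holomorphic charts, together with the gluing and partition-of-unity errors, cannot be pushed down to the sharp values $\Lambda$ and $c^{-1}$ appearing in Proposition \ref{propl2estimateghlimit}.
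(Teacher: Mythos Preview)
Your proposal takes a route genuinely different from the paper's, and it has a real gap. The paper's argument is much shorter: by Lee--Tam \cite[Theorem 1.1]{leetam1} (not \cite{lggs2}), the orthogonal bisectional curvature bound forces the limit $X$ to be a complex \emph{manifold}, so $X_{sing}=\emptyset$ and all your care near the singular set is moot. The decisive input is Lee--Tam \cite[Lemma 3.4]{leetam1}, which furnishes, for each $R>0$, a local K\"ahler--Ricci flow solution $\omega(t)$ on $B_{R+10}(p)\subset X$ itself, with $\Ric(\omega(t))\ge -C(n,v,\Lambda)\,\omega(t)$, non-collapsing $\Vol_{\omega(t)}(B_1(x))\ge C(n,v,\Lambda)^{-1}$, and Gromov--Hausdorff convergence to $(X,d)$ as $t\to 0$. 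Since $\linebundle'_\infty$ is already a holomorphic line bundle on the smooth K\"ahler manifold $(B_R(p),\omega(t))$, one simply reruns the proof of Proposition \ref{propl2estimateghlimit} with $(X,\omega(t))_{t\to 0}$ playing the role of the approximating sequence; no construction on the $M_l$ is needed at all. The relaxed constants $C_1,C_2$ come from the fact that the Ricci lower bound along the flow is $-C(n,v,\Lambda)$ rather than $-\Lambda$.

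The gap in your approach is the step ``correcting them by a $\Psi(l^{-1})$-small factor yields an honest holomorphic line bundle $\widetilde{\linebundle}'_l$''. Transplanting the transition functions of $\linebundle'_\infty$ through converging charts gives only an approximate \v{C}ech $1$-cocycle on $M_l$, and turning this into a genuine holomorphic cocycle is itself a $\partialbar$-problem on triple overlaps that presupposes $L^2$ estimates of the kind you are trying to prove; you cannot simply assert it. The paper sidesteps this circularity entirely by never leaving $X$.
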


\begin{proof}
By \cite[Theorem 1.1]{leetam1}, there exists a subsequence of $\{ M_l \} $ such that the complex structures converging to a complex manifold structure on $X$. Then \cite[Lemma 3.4]{leetam1} shows that for each $R>0$, we can find a constant $T>0$ and a local K\"ahler-Ricci flow solution $\omega (t)$ on $B_{d;R+10} (p) \times (0,T] $ satisfying that $ B_{\omega (t);R+5} (p) $ converging to $B_{d;R+5} (p) $ in the Gromov-Hausdorff sense, $\Vol_{\omega (t)} (B_1 (x ,t) ) \geq C(n,v,\Lambda )^{-1} $ and $\Ric (\omega(t)) \geq -C(n,v,\Lambda ) \omega (t) $, $ \forall x\in B_{d;R } (p) $. Note that the line bundle $\linebundle'_\infty $ is also a holomorphic line bundle on $ (B_{d,R } (p) ,\omega (t) ) $. Hence we can follow the arguments in Proposition \ref{propl2estimateghlimit}.
\end{proof}

\section{Applications }
\label{applicationsection}

We will give some applications of the previous results in this section.

\subsection{Convergence of Fubini-Study currents }

Now we are in place to prove Proposition \ref{propfscurrentdistribution}.

\vspace{0.2cm}

\noindent \textbf{Proof of Proposition \ref{propfscurrentdistribution}: }
By the standard result of the Monge-Amp{\'e}re operator of locally bounded plurisubharmonic functions \cite[Corollary 3.6]{dm1}, we only need to show that $m^{-1} \log \left( \rho_{\linebundle'_\infty ,m } \right) $ converge to $0$ locally uniformly.

Let $x\in X$. Choosing a sequence of points $M_l \ni x_l \to x $ and a sequence of holomorphic maps $F_{l} : B_{10r} (x_l ) \to \mathbb{C}^N $ converging to an injective holomorphic map $F : B_{10r} (x) \to\mathbb{C}^N $. Here $r>0$ is a constant. Set $\eta \in C^\infty_c \left( B_{10r} (x) \right) $ be a cut-off function such that $\eta =1$ on $B_{9 r} (x)$. For each $y\in B_{5 r} (x) $, we define $\alpha_y =\partialbar \left( \eta s_{m,y} \right) $ and $\varphi_{y} = \eta \left| F -F(y) \right|^{10n} $, where $s_{m,y} $ is the peak section at $y$. By Proposition \ref{propl2estimateghlimit}, we have 
$$\rho_{X,\linebundle'_\infty , m} \geq \left( 1-\Psi \left( m^{-1} |r,x \right) \right) \rho_{B_{10r} (x) ,\linebundle'_\infty ,m} ,$$ 
on $B_{5 r } (x)$, where $\rho_{U,\linebundle'_\infty ,m} $ is the $m$-th Bergman kernel of $(U ,\mu ,\linebundle'_\infty |_{U} ,h'_\infty |_{U} )$.

Without loss of generality, we can assume that $\linebundle'_\infty |_{B_{10 } (x)} $ is a trivial bundle, there exists a section $e_{x} \in H^{0}_{L^2} (B_{10 r } (x) ,\linebundle'_\infty ) $ satisfying $\left| \Vert e_x \Vert -1 \right|\leq \Psi (r|x) $. Note that $h'_\infty$ is continuous. By shrinking the value of $r$, one can obtain a K\"ahler potential $\psi_x $ on $B_{10 r } (x) $ such that $\left| \psi_x \right|\leq \Psi (r|x) $. Combining the partial $C^0 $ estimate with Lemma \ref{lmmlowercontinuitybergman} we conclude that $\rho_{B_{10r} (x) ,\mathbb{C} ,\psi_x ,m} \geq bm^{n} $ on $B_{5 r } (x)$ for sufficiently large $m$, where $b>0$ is a constant, and $\rho_{B_{10r} (x) ,\mathbb{C} ,\psi_x ,m} $ is the $m$-th Bergman kernel of $(U ,\mu ,\mathbb{C} ,e^{-2\pi \psi_x } )$. Clearly, $ \left| \log \Vert e_x \Vert \right| +\left| \psi_x \right| \leq \Psi (r|x) $. By the definition of Bergman kernels, 
$$ \rho_{B_{10r} (x) ,\linebundle'_\infty ,m} \geq e^{-m\Psi (r|x) } \rho_{B_{10r} (x) ,\mathbb{C} ,\psi_x ,m} \geq b e^{-m\Psi (r|x)} m^n ,$$
on $B_{5 r } (x)$, for sufficiently large $m$. It follows that 
$$ \liminf_{m\to\infty } m^{-1} \log \left( \rho_{\linebundle'_\infty ,m } \right) \geq -\Psi (r|x) ,$$
on $B_{5r} (x) $. By the definition, $\rho_{X,\linebundle'_\infty , m} \leq \rho_{B_{10r} (x) ,\linebundle'_\infty ,m} $, and hence we have 
$$ \limsup_{m\to\infty } m^{-1} \log \left( \rho_{\linebundle'_\infty ,m } \right) \leq \Psi (r|x) ,$$ 
on $B_{5r} (x) $ by a similar argument. It follows that for each $x\in X$ and any constant $\epsilon >0 $, there are constants $r_{x,\epsilon } >0$ and $m_{x,\epsilon } $, such that $ \left| m^{-1} \log \left( \rho_{\linebundle'_\infty ,m } \right) \right| \leq \epsilon $ on $B_{r_{x,\epsilon } } (x) $, $ \forall m\geq m_{x,\epsilon } $. Then we see that $m^{-1} \log \left( \rho_{\linebundle'_\infty ,m } \right) $ converge to $0$ locally uniformly.
\qed

\vspace{0.2cm}

\subsection{\texorpdfstring{$L^p$}{Lg} asymptotic expansion }
\label{lpestsection}
Now we prove Theorem \ref{lpestprop}.

\vspace{0.2cm}

\noindent \textbf{Proof of Theorem \ref{lpestprop}: }
We consider the case $p=1$ at first. Clearly, the unique polarization of the Euclidean space $\mathbb{C}^n$ is the Hermitian line bundle $\left( \mathbb{C} ,e^{-\pi|z|^2} \right) $, and the Bergman kernel $\rho_{\mathbb{C}^n ,1} =1 $. Theorem \ref{thmcontinuousbergmankernel} now shows that $$ \left| \rho_{r^{-2}\omega ,1} (x) -1 \right| \leq \Psi (\epsilon |n,\Lambda ) ,$$
for each $x\notin \mathcal{S}_{\epsilon ,  \epsilon^{-1} r}$, where $r>0$ is a constant. By Theorem \ref{jcwsjan1thm17coro}, we conclude that there exists a constant $C_1 =C_1 (n,\epsilon ,\Lambda ,v)$ such that
$$ \Vol \left( B_{\epsilon^{-1} r} \left( \mathcal{S}^{n-1}_{\epsilon ,\epsilon^{-1} r} (X) \right) \cap B_1 (x)  \right) \leq C_1 r^2 .$$
Let $r={m}^{-\frac{1}{2}} $. Then $\rho_{m\omega ,1} =m^{-n} \rho_{\omega ,m} $ implies that
\begin{eqnarray}
\Vol \left( \left\lbrace y\in  B_1 (x) : \left| m^{-n} \rho_{\omega ,m} (y) -1 \right| \geq \delta \right\rbrace \right) \leq C_2 {m}^{-1} , \label{alexrholowerest}
\end{eqnarray}
where $C_2 $ is a constant that only depends on $n,\delta ,\Lambda ,v$. To shorten notation, we denote by $\mathcal{B}_{m,\delta }$ the set $\left\lbrace y\in  B_1 (x) : \left| m^{-n} \rho_{\omega ,m} (y) -1 \right| \leq \delta \right\rbrace $ from now. Combining Lemma \ref{sobolevlmm} with (\ref{alexrholowerest}), we obtain
\begin{eqnarray*}
& & \int_{B_1 (x)} \left| m^{-n} \rho_{\omega ,m} - 1 \right|^{p} \omega^n \\
& \leq & \int_{B_1 (x) \sq \mathcal{B}_{m,\delta } } \left| m^{-n} \rho_{\omega ,m} - 1 \right|^{p} \omega^n + \int_{\mathcal{B}_{m,\delta } } \left| m^{-n} \rho_{\omega ,m} - 1 \right|^{p} \omega^n  \\
& \leq & \Vol \left( B_1 (x) \right) \delta^p + C^p_3 {m}^{-1}  ,
\end{eqnarray*}
where $C_3 $ is a constant that only depends on $n,\delta ,\Lambda ,v$. It follows that
$$ \left\Vert m^{-n} \rho_{\omega ,m} - 1 \right\Vert_{L^p \left( B_1 (x) \right) } \leq \Psi \left( \delta , \frac{1}{m} \bigg| n ,\Lambda ,v \right) ,$$
and this is precisely the assertion. \qed

\vspace{0.2cm}

\begin{rmk}
If we further assume that $\left| \Ric \right| \leq \Lambda $, then there are positive constants $m_0 = m_0 \left( n,p,\Lambda ,v \right)$, $C = C \left( n,p,\Lambda ,v \right)$, such that
\begin{eqnarray*}
\left\Vert m^{-n} \rho_{\omega ,m} - 1 \right\Vert_{L^p \left( B_1 (x) \right) }  & \leq & C \max\left\lbrace m^{-1}  , m^{-\frac{2}{p}} , m^{-1} \sqrt{ \log (m)} \delta_{p,2} \right\rbrace ,\;\; \forall m\geq m_0 .
\end{eqnarray*}
When $\left| \Ric \right| \leq \Lambda $, we can prove this stronger estimate by combining Anderson's $C^{1,\alpha }$ coordinates (\cite{anderson1}, Theorem 3.2), Jiang-Naber's Minkowski type estimate (\cite{jwsnb1}, Theorem 1.15) and Tian's peak section method (\cite{tg1}, Lemma 1.2). 
\end{rmk}

Now we discuss an example showing that there exists a sequence of $1$-dimensional polarized K\"ahler manifolds $\left( M,\omega_k ,L \right) $ with bounded sectional curvature and non-collapsing volume lower bound, such that for any sequence of functions $a_{1,k} \in L^1 \left( M,\omega_k \right) $, the uniform convergence condition
$$ \lim_{m\to\infty} \sup_{k\in\mathbb{N}} \left\Vert m^{1-n} \rho_{\omega_k ,m}  -m -a_{1,k } \right\Vert_{L^1} =0 $$
does not hold. It follows that there is no uniform second-order asymptotic expansion similar to above for the Bergman kernels.

\begin{exap}
\label{oscillation}
\upshape Let $M=\mathbb{C}P^1$, $L=\mathcal{O} (1)$, then we have two open sets 
\begin{eqnarray*}
U_0 & = & \left\lbrace \left[ 1 , w \right]\in \mathbb{C}P^1  \right\rbrace ,\\
U_1 & = & \left\lbrace \left[ z , 1 \right]\in \mathbb{C}P^1 : \left| z \right| <1 \right\rbrace
\end{eqnarray*} 
in $M$, such that $U_0 \cup U_1 =M $. Choose a radial cut-off function $\eta \in C_0^{\infty} \left( B_1(0) \right) \subset C_0^{\infty } \left( \mathbb{C} \right)$, s.t. $0\leq\eta\leq 1$ and $\eta =1$ on $B_{\frac{1}{2}} (0)$. For each $k\in\mathbb{N}$, we define $$\varphi_k = k^{-4} \sin (kz+k\zbar  )\sin (\sqrt{-1}kz-\sqrt{-1}k\zbar ) \eta (z) $$ on $U_1 $. Then $h_k = e^{\varphi_k } h_{0} $ gives a Hermitian metric on $L$, where $h_0 $ be the normal metric on $L$, i.e. $h_0 = \frac{1}{|w|^2 +1} $ on $U_0 $, and $h_0 = \frac{1}{|z|^2 +1}$ on $U_1$. Clearly, $\Ric(h_0 ) = 2\pi \omega_{FS} $ on $M$, and hence 
\begin{eqnarray*}
\Ric( h_k ) & = & 2\pi \omega_{FS} - \sqrt{-1}  \partial\partialbar \varphi_k .
\end{eqnarray*}
For sufficiently large $k$, $\Ric\left( h_k  \right) $ is also a K\"ahler form. Let $g_k $ be the K\"ahler metric corresponds to $\Ric\left( h_k  \right) $. Recall that $$R_{1\bar{1} 1\bar{1}} = -\frac{\partial^2 g_{1\bar{1}}}{\partial z \partial \zbar } - g^{1\bar{1}} \frac{\partial g_{1\bar{1}}}{\partial z } \frac{\partial g_{1\bar{1}}}{\partial \zbar } ,$$
we can find a constant $C>0 $ such that on $U_1 $, 
\begin{eqnarray*}
\left| R_{k, 1\bar{1} 1\bar{1}} + \frac{64 k^4}{\pi }   \varphi_k  - R_{FS,1\bar{1} 1\bar{1}} \right| \leq \frac{C}{k} , \label{rkfs}
\end{eqnarray*}
where $ R_{FS,1\bar{1} 1\bar{1}} $ is the curvature of Fubini-Study metric on $U_1 $, and $ R_{k, 1\bar{1} 1\bar{1}} $ is the curvature of $g_k$ on $U_1 $. It is easy to check that there is a constant $\delta >0$ satisfies that 
\begin{eqnarray}
\liminf_{k\to\infty } \left\Vert  k^4 \varphi_k -f  \right\Vert_{L^1 (M)} >\delta ,\label{oscillationvarphi}
\end{eqnarray}
for each given $f\in L^1 (M)$.

If there is a uniform sequence $b_m $ such that $\lim_{m\to\infty} b_m =0$, and 
\begin{eqnarray*}
\left\Vert  m^{1-n}\rho_{\omega_k ,m} -m - a_{1,k} \right\Vert_{L^1 } \leq b_m  ,
\end{eqnarray*}
for any sufficiently large $k$ and $m$, then by Lu's work \cite{zql1}, we have $a_{1,k} = \frac{S\left( \omega_k \right)}{2}$, where $S\left( \omega_k \right) $ is the scalar curvature of $\left( M, \omega_k \right)$. Since $g_k $ converges to $g$ in $C^\infty $-topology as $k\to\infty $, $m^{1-n}\rho_{\omega_k ,m} -m$ is also convergence in $C^\infty $-topology as $k\to\infty $, for each given $m$. It follows that the sequence of scalar curvatures of $g_k$ is convergence in $L^1$ norm as $k\to\infty $, contrary to \upshape{(\ref{oscillationvarphi})}.
\end{exap}

\subsection{\texorpdfstring{$C^0$}{Lg} asymptotic estimate}
\label{sectionc0asymptoticestimate}

This part is devoted to the proof of Proposition \ref{refinementofbkk}.

Before proving Proposition \ref{refinementofbkk}, we need to consider the complex structure of the tangent cones of the limit spaces. This generalizes a result proved by Lott under the lower bound of bisectional curvature \cite{lott1}. Note that the following limit space is a complex manifold \cite[Theorem 1.1]{leetam1}. 

\begin{lmm}
\label{tangentconeOBCn}
Let $\left( X ,d ,p \right) $ be the Gromov-Hausdorff limit of a sequence of pointed complete polarized K\"ahler manifolds $\left( M_l ,\omega_l ,p_l \right) $ with $\Ric \left( \omega_l \right) \geq -\Lambda \omega_l $, $OB\geq -\Lambda $ and $\Vol \left( B_1 \left( p_l \right) \right) \geq v $, $\forall l\in\mathbb{N} $, where $\Lambda ,v>0$ are constants. Let $T_p X$ be a tangent cone of $X$ at $p$. Then $T_p X$ is biholomorphic to $\mathbb{C}^n $, and $\frac{1}{2} r^2 $ is a K\"ahler potential on $T_p X$, where $r $ is the distance from $p$.
\end{lmm}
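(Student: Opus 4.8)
The plan is to zoom in at $p$, realising $T_pX$ as a non-collapsed pointed Gromov--Hausdorff limit of complete polarized K\"ahler manifolds whose Ricci and orthogonal bisectional curvatures tend to be nonnegative, and then to combine three inputs: the metric cone structure of $T_pX$ (Cheeger--Colding), the smoothness of its complex structure (\cite{leetam1}), and the local partial $C^0$-estimate (Proposition~\ref{localpartialc0estimate}) coming from the polarization.

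First I would carry out the rescaling. Write $T_pX=\lim_{i}(X,r_i^{-1}d,p)$ for some $r_i\to 0$. Since $X$ is the Gromov--Hausdorff limit of $(M_l,\omega_l,p_l)$ with converging complex structures, a diagonal argument produces $l_i\to\infty$ and points $q_i\to p$ so that $(M_{l_i},r_i^{-2}\omega_{l_i},q_i)$ converges to $(T_pX,o)$ with the complex structures converging to that of $T_pX$. Under the rescaling $\omega\mapsto r_i^{-2}\omega$ one has $\Ric\geq -\Lambda r_i^2\,(r_i^{-2}\omega_{l_i})$ and $OB\geq -\Lambda r_i^2$, while the relative volume comparison together with the non-collapsing at unit scale gives $\Vol(B_1)\geq v'(n,v,\Lambda)>0$ for the rescaled metrics. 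Hence $T_pX$ is a non-collapsed limit of complete polarized K\"ahler manifolds with $\Ric\to 0$ and $OB\to 0$. By Cheeger--Colding, $T_pX$ is a metric cone $C(Z)$ with $r=d(\cdot,o)$; by \cite[Theorem~1.1]{leetam1}---this is where the hypothesis $OB\geq-\Lambda$ is used essentially---$T_pX$ is a smooth complex manifold and the complex structures of $M_{l_i}$ converge to it.

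Next I would identify $\tfrac12 r^2$ as a K\"ahler potential. Being a metric cone, $T_pX$ carries (by the cone characterisation in Cheeger--Colding theory) the function $\tfrac12 r^2$ with Riemannian Hessian $\nabla^2(\tfrac12 r^2)=g$ on the regular part $\mathcal R\subset T_pX$, which has full measure and on which the K\"ahler metric is smooth. On a K\"ahler manifold the $(1,1)$-component of $\nabla^2 f$ is $\sqrt{-1}\partial\partialbar f$, so $\nabla^2(\tfrac12 r^2)=g$ yields $\sqrt{-1}\partial\partialbar(\tfrac12 r^2)=\omega_{T_pX}$ on $\mathcal R$; since $\tfrac12 r^2$ is continuous and plurisubharmonic and $T_pX\sq\mathcal R$ is contained in a proper analytic subset, this extends to an identity of currents on $T_pX$ (and by Theorem~\ref{thmuniquenesskahlerpotential} the K\"ahler potential furnished by the convergence of K\"ahler structures agrees with $\tfrac12 r^2$ up to a pluriharmonic function, which the cone structure forces to be constant). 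This already gives the second assertion.

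Finally, for the biholomorphism with $\mathbb C^n$ I would pass the cone structure to the complex side: from $\nabla^2(\tfrac12 r^2)=g$ one checks that $\mathcal V=\nabla(\tfrac12 r^2)-\sqrt{-1}J\nabla(\tfrac12 r^2)=r\partial_r-\sqrt{-1}J(r\partial_r)$ is a holomorphic vector field on $T_pX$, whose flow is a holomorphic $\mathbb C^*$-action with $o$ as unique attracting fixed point. Since every metric ball in $T_pX$ is a rescaled ball about the vertex of the cone, the local partial $C^0$-estimate of Proposition~\ref{localpartialc0estimate} applies at all scales and realises $T_pX$ as a normal affine variety invariant under this $\mathbb C^*$-action---an affine cone; being also a smooth complex manifold, it is a smooth affine cone, and such a cone is biholomorphic to $\mathbb C^n$ (equivalently, $OB\geq 0$ on the smooth K\"ahler cone $T_pX$ forces its link $Z$ to be the round $S^{2n-1}$, so $T_pX$ is flat, generalising Lott's argument in \cite{lott1}). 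I expect this last step---upgrading ``smooth complex manifold which is a metric cone with $OB\geq 0$'' to ``biholomorphic to $\mathbb C^n$''---to be the main difficulty: one must exclude all smooth affine cones other than $\mathbb C^n$, either via the rigidity of K\"ahler cones with nonnegative orthogonal bisectional curvature, or by linearising the $\mathbb C^*$-action at the attracting fixed point using the smoothness and normality inherited from \cite{leetam1}. Steps one through three are comparatively routine once the cited convergence theorems are available.
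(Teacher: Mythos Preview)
Your setup---rescaling to realise $T_pX$ as a non-collapsed limit with curvature bounds going to zero, invoking \cite[Theorem~1.1]{leetam1} for smoothness of the complex structure, and using the Cheeger--Colding cone/Hessian identity to identify $\tfrac12 r^2$ as a K\"ahler potential---matches the paper's argument essentially verbatim. The paper phrases the potential step via the approximating functions $\phi_l$ on $M_l$ with $|\phi_l-\tfrac12 d_l^2|\le\Psi(l^{-1})$ and $\|\sqrt{-1}\partial\partialbar\phi_l-\omega_l\|_{L^2}\le\Psi(l^{-1})$, but this is the same content as your $\nabla^2(\tfrac12 r^2)=g$ argument.

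The genuine divergence is in the biholomorphism step, and here the paper takes a shorter route that dissolves the difficulty you correctly flagged. Rather than constructing the holomorphic Reeb field, integrating to a $\mathbb C^*$-action, and then appealing to a rigidity statement for smooth affine cones (or for K\"ahler cones with $OB\ge 0$), the paper invokes \cite[Theorem~1.1]{lggs2} directly: that result already supplies homogeneous holomorphic functions $f_1,\dots,f_N$ on $T_pX$ with $r\partial_r f_j=\lambda_j f_j$, $\lambda_j>0$, such that $F_N=(f_1,\dots,f_N)$ is a proper holomorphic embedding into $\mathbb C^N$. Since $T_pX$ is smooth at the vertex (by \cite{leetam1}), one can reorder so that $F=(f_1,\dots,f_n)$ is regular at $o$; homogeneity then forces $F:T_pX\to\mathbb C^n$ to be proper and of degree one, hence a biholomorphism. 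This replaces your entire last paragraph by a three-line argument, at the cost of importing the Liu--Sz\'ekelyhidi structure theorem for tangent cones rather than Proposition~\ref{localpartialc0estimate}. Your approach would also work (the linearisation of a contracting $\mathbb C^*$-action at a smooth fixed point does give $\mathbb C^n$), but it rederives by hand part of what \cite{lggs2} packages, and the regularity of the metric needed to make the vector field holomorphic across the metric-singular set requires exactly the kind of care you anticipated.
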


\begin{proof}
From \cite[Theorem 1.1]{leetam1}, we can assume that the complex structures of $M_l $ converging to a complex manifold structure on $X$, and hence $T_p X$ is also a complex manifold. Note that there are positive constants $\mu_l $ such that $\mu_l \to 0 $, and $\left( M_l ,\mu^{-2}_l \omega_l ,p_l \right) $ converging to $(T_p X  ,p ) $ in the pointed Gromov-Hausdorff sense. 

By \cite[Theorem 1.1]{lggs2}, we can find holomorphic functions $\{ f_j \}_{j=1}^{N} $ on $T_p X$ satisfying that $r\partial_r f_j =\lambda_j f_j $ for some constant $\lambda_j >0 $, and $F_N =(f_1 , \cdots ,f_N )$ gives a proper holomorphic embedding $F_N : T_p X \to \mathbb{C}^N $. Without loss of generality, we can assume that the map $F= (f_1 , \cdots ,f_n ) :T_p X\to \mathbb{C}^n $ is regular at $p\in T_p X $. Then the homogeneity of $f_j $ implies that $F$ is proper and degree $1$. It follows that $F$ is also an embedding, and hence $F$ is biholomorphic.

By the standard Cheeger-Colding theory, for each $R>0$, we can find a sequence of smooth functions $\phi_l $ on $B_R (p_l ) \subset \left( M_l ,\mu^{-2}_l \omega_l ,p_l \right) $ such that $ \left| \phi_l -\frac{1}{2} d_l^2 \right| \leq \Psi (l^{-1} ) $ on $B_R (p_l ) $, and 
$$ \int_{B_R (p_l )} \left| \sqrt{-1} \partial\partialbar \phi_l -\mu^{-2}_l \omega_l \right|_{\mu^{-2}_l \omega_l }^2 \mu^{-2n}_l \omega^n_l \leq \Psi (l^{-1} ) .$$
Then we see that $\frac{1}{2} r^2 $ is a K\"ahler potential on $B_R (p ) \subset T_p X$, $\forall R>0 $. This completes the proof.
\end{proof}

\vspace{0.2cm}

\noindent \textbf{Proof of Proposition \ref{refinementofbkk}: }
We argue by contradiction. Suppose that there exist a sequence of pointed polarized K\"ahler manifolds $\left( M_l ,\omega_l ,\linebundle_l ,h_l ,p_l \right) $ with $BK \geq -K $ and $\Vol \left( B_1 \left( p_l \right) \right) \geq v $, $\forall l\in\mathbb{N} $, where $K ,v>0$ are constants, and a strictly increasing sequence of integers $\left\lbrace j_l \right\rbrace_{l=1}^{\infty}$ such that $ \rho_{M_l ,j_l \omega_l ,\linebundle^{j_l}_l ,h^{j_l}_l ,1} \left( p_l \right) = j^{-n}_{l} \rho_{M_l ,\omega_l ,\linebundle_l ,h_l ,j_l} \left( p_l \right) \to 0 $ as $l\to\infty $. By choosing a subsequence, we can assume that $ \left( M_l ,j_l \omega_l ,p_l \right) $ converge to a pointed metric space $(X,d,p)$ with a normal complex space structure, and $ ( \linebundle^{j_l}_l ,h^{j_l}_l ) $ converge to a line bundle $\linebundle_\infty $ with a continuous Hermitian metric $h_\infty $. Actually, $X$ is a complex manifold \cite[Theorem 1.1]{leetam1}. By Theorem \ref{thmcontinuousbergmankernel}, we can conclude that $ \rho_{X ,\mu ,\linebundle_\infty ,h_\infty ,1} \left( p \right) = 0 $, where $\mu $ is the $2n$-dimensional Hausdorff measure on $X$.

Now we consider the sequence of pointed polarized K\"ahler manifolds $\left( M_l , j_{l,k} \omega_l ,\linebundle_l^{j_{l,k}} ,h_l^{j_{l,k}} ,p_l \right) $ for each $k\in\mathbb{N} $, where $j_{l,k} = \lfloor k^{-1} j_l \rfloor $, and $\lfloor \cdot \rfloor $ is the greatest integer function. It is clear that $\left( M_l , j_{l,k} \omega_l ,\linebundle_l^{j_{l,k}} ,h_l^{j_{l,k}} ,p_l \right) $ converge to $ \left( X,k^{-\frac{1}{2}} d ,\linebundle_{\infty ,k} ,h_{\infty ,k} ,p  \right) $, where $\left( \linebundle^k_{\infty ,k} ,h^k_{\infty ,k} \right) = \left( \linebundle_{\infty } ,h_{\infty } \right) $. Then Lemma \ref{sobolevlmm} implies that $ \rho_{X ,k^{-n} \mu ,\linebundle_{\infty ,k} ,h_{\infty ,k} ,1} \left( p \right) = 0 $, $\forall k\in\mathbb{N} $. But $ \left( X,k^{-\frac{1}{2}} d , p  \right) $ converge to a metric cone $(V,d' ,o)$ as $k\to\infty$, and hence their complex structures converge to $\mathbb{C}^n $ with K\"ahler potential $\frac{r^2}{2}$, where $r$ is the distance function from $o$. Clearly, $ (\linebundle_{\infty ,k} , h_{\infty ,k} ) $ converge to the trivial bundle with Hermitian metric $e^{-\pi r^2}$. Apply Theorem \ref{thmcontinuousbergmankernel} again, we have $ \rho_{V,\mathbb{C} ,e^{-\pi r^2} ,1} \left( o \right) = 0 $. Then $e^{-\pi r^2} \in L^2 (V,\mu_V ) $ shows that the frame $e\in H^0_{L^2} \left( V,\mathbb{C} \right) $, where $\mu_V$ is the $2n$-dimensional Hausdorff measure on $(V,d')$. Note that there exists a constant $c>0$ such that $\Vol (B_r (o)) =cr^{2n} $, $\forall r>0$.  It follows that $ \rho_{V,\mathbb{C} ,e^{-\pi r^2} ,1} \left( o \right) \geq \Vert e^{-\pi r^2} \Vert_{L^2}^{-2} >0 $, contradiction.
\qed

\vspace{0.2cm}

\subsection{Lower bound of partial \texorpdfstring{$C^0$}{Lg} estimate}
\label{Calculationsonorbifolds}

In this section, we consider the upper bound of the lower bound of Bergman kernel on orbifold with cyclic quotient singularity. We start with the calculation of the Bergman kernel on the model space $ \mathbb{C}^n /\mathbb{Z}_q $. The following result is known (see \cite{dailiuma1}, \cite{mamari1}), but we include a proof.

\begin{lmm}
\label{lmmcircleorbifoldbgmkn}
Let $\left( X ,0\right) = \mathbb{C}^n /\mathbb{Z}_q $ be a flat orbifold with cyclic quotient singularity, and $L$ be a trivial line bundle on $X $ equipped with a hermitian metric $h$ whose curvature form is $2\pi \omega $, where $\mathbb{Z}_q$ is generated by the action
\begin{displaymath}
\sigma = \left\{ \begin{aligned}
\mathbb{C}^n & \longrightarrow & \mathbb{C}^n \quad\quad\quad\quad\quad\quad\quad\quad\quad ,\\
\left( z_1 ,\cdots ,z_n \right) & \longmapsto & \left( e^{\frac{2p_1 \pi}{q_1} \sqrt{-1} } z_1 ,\cdots , e^{\frac{2p_n \pi}{q_n} \sqrt{-1} } z_n \right) ,
\end{aligned} \right.
\end{displaymath}
$p_i$, $q_i$ are co-prime for $i=1,\cdots ,n$, and $q$ is the least common multiple of $ q_1 ,\cdots ,q_n  $. Then we have
\begin{eqnarray}
\rho_{\omega_{Euc} ,1} (p(z)) =  e^{-\pi |z|^2}  \sum_{j=0}^{q-1} e^{ \pi \sum_{i=1}^{n} |z_i |^2 e^{\frac{ 2 j p_i \pi\sqrt{-1} }{q_i}} } , \label{equacircletionbgmkn}
\end{eqnarray}
where $p$ is the quotient map $\mathbb{C}^n \to \mathbb{C}^n /G $, and $\omega_{Euc}$ is the Euclidean metric on $\mathbb{C}^n$.
\end{lmm}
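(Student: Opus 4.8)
The plan is to identify $H^0_{L^2}(X,L)$ with the $\mathbb{Z}_q$-invariant part of the Bargmann--Fock space on $\mathbb{C}^n$, write down its orthonormal basis of invariant monomials, express the Bergman kernel as the diagonal of the reproducing kernel, and finally extract the invariant monomials by a roots-of-unity filter.

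First I would set up the correspondence. Write $p:\mathbb{C}^n\to X=\mathbb{C}^n/\mathbb{Z}_q$ for the quotient map; off the fixed locus of the $\mathbb{Z}_q$-action, which is a finite union of linear subspaces of complex codimension $\ge 1$, the map $p$ is a $q$-sheeted covering and a local isometry for the flat metrics, so $\int_{X_{reg}}g\,d\mu=\tfrac1q\int_{\mathbb{C}^n}(g\circ p)\,dV_{Euc}$ for every $g\ge0$, where $dV_{Euc}=\omega_{Euc}^n/n!$ and $\mu$ is the $2n$-dimensional Hausdorff measure. Since $X$ is normal and the fixed locus has codimension $\ge1$, every $s\in H^0(X_{reg},L)$ extends across $X_{sing}$ and, under the normalisation $p^*h=e^{-\pi|z|^2}$ (the flat metric; this is the $h$ intended in the statement), pulls back to a $\mathbb{Z}_q$-invariant entire function $f$ on $\mathbb{C}^n$, with $\|s\|_{L^2;X}^2=\tfrac1q\|f\|_{\mathcal F}^2$, where $\mathcal F=H^0_{L^2}(\mathbb{C}^n,\mathbb{C},e^{-\pi|z|^2})$ is the Bargmann--Fock space. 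Thus $H^0_{L^2}(X,L)$ is isometric, up to the scalar $q^{-1/2}$, to the invariant subspace $\mathcal F^{\mathbb{Z}_q}$.

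Next I would produce the orthonormal basis. The monomials $z^\alpha$ are mutually orthogonal in $\mathcal F$ with $\|z^\alpha\|_{\mathcal F}^2=\alpha!/\pi^{|\alpha|}$, and since $\sigma^*(z^\alpha)=e^{2\pi\sqrt{-1}\sum_i p_i\alpha_i/q_i}z^\alpha$ they are $\mathbb{Z}_q$-invariant exactly for $\alpha$ in the semigroup $\Lambda=\{\alpha\in\mathbb{Z}_{\ge0}^n:\ \sum_i p_i\alpha_i/q_i\in\mathbb{Z}\}$; because $\sigma$ acts diagonally on monomials, $\mathcal F^{\mathbb{Z}_q}=\overline{\mathrm{span}}\{z^\alpha:\alpha\in\Lambda\}$. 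Hence $\{\sqrt{q}\,(\pi^{|\alpha|}/\alpha!)^{1/2}z^\alpha\}_{\alpha\in\Lambda}$ is an orthonormal basis of $H^0_{L^2}(X,L)$. By the standard reproducing-kernel identity $\rho_{X,\mu,L,h,1}(x)=\sum_k\|e_k(x)\|_h^2$ for any orthonormal basis $\{e_k\}$ (the evaluation functionals being bounded, with the series converging locally uniformly, dominated by $q\,e^{\pi|z|^2}$), we obtain
\begin{equation*}
\rho_{\omega_{Euc},1}(p(z))=q\,e^{-\pi|z|^2}\sum_{\alpha\in\Lambda}\frac{\pi^{|\alpha|}}{\alpha!}\,|z^\alpha|^2 .
\end{equation*}
Finally I would apply the roots-of-unity filter: for every $\alpha$ the number $\zeta_\alpha=e^{2\pi\sqrt{-1}\sum_i p_i\alpha_i/q_i}$ is a $q$-th root of unity (since each $q_i\mid q$), so $\tfrac1q\sum_{j=0}^{q-1}\zeta_\alpha^{\,j}$ equals $1$ if $\alpha\in\Lambda$ and $0$ otherwise. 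Inserting this, interchanging the finite sum over $j$ with the absolutely convergent sum over $\alpha$, and factorising,
\begin{equation*}
\sum_{\alpha\in\Lambda}\frac{\pi^{|\alpha|}}{\alpha!}|z^\alpha|^2=\frac1q\sum_{j=0}^{q-1}\prod_{i=1}^n\sum_{k\ge0}\frac{\big(\pi|z_i|^2e^{2jp_i\pi\sqrt{-1}/q_i}\big)^k}{k!}=\frac1q\sum_{j=0}^{q-1}\exp\!\Big(\pi\sum_{i=1}^n|z_i|^2e^{2jp_i\pi\sqrt{-1}/q_i}\Big),
\end{equation*}
and multiplying by $q\,e^{-\pi|z|^2}$ yields \eqref{equacircletionbgmkn}.

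The computation is routine; the only points needing genuine care are the identification of $H^0_{L^2}(X,L)$ — extending holomorphic sections across the codimension-$\ge1$ singular locus via normality, and correctly tracking the factor $q$ coming from the ratio between the $2n$-dimensional Hausdorff measure on $X$ and the pushforward of Euclidean volume — together with the justification that the Bergman kernel is the diagonal of the reproducing kernel. Once these are settled, the roots-of-unity filter converts the sum over the invariant semigroup $\Lambda$ into the stated finite sum of exponentials.
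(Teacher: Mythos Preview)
Your proposal is correct and follows essentially the same route as the paper: identify $L^2$ sections with $\mathbb{Z}_q$-invariant functions in the Bargmann--Fock space, take the invariant monomial orthonormal basis, sum the pointwise norms, and collapse the sum over $\Lambda$ via the roots-of-unity identity (which the paper phrases as ``comparison with the Taylor expansion of the exponential'').

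One small point where the paper does a bit more than you: you take $p^*h=e^{-\pi|z|^2}$ as given (``this is the $h$ intended in the statement''), whereas the paper actually shows that any hermitian metric $h$ with curvature $2\pi\omega$ can be brought to this form. Concretely, writing $a(z)=h(e_L,e_L)\circ p$, the function $\log a+\pi|z|^2$ is pluriharmonic and $G$-invariant, hence equals $\mathrm{Re}(f)$ for some $G$-invariant holomorphic $f$ (using $f-f\circ\sigma$ constant and $f(0)=f(\sigma(0))$), and replacing $e_L$ by $e^{-f/2}e_L$ normalises $a$ to $e^{-\pi|z|^2}$. This is the step that makes the formula for $\rho$ independent of the particular $h$ in the statement; you may want to insert this argument rather than assume the normalisation.
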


\begin{proof}
Since $L$ is trivial, we can find a non-vanishing section $e_L \in H^0 \left( X,L \right) $. Then we consider the map between global sections induced by pullback:
$$ \phi_p : H^0 \left( X,L \right) \longrightarrow H^0 \left( \mathbb{C}^n ,p^* L \right)  .$$
It is easy to show that $ \phi_p $ is an embedding, and the image of $\phi_p $ is
$$ \phi_p \left( H^0 \left( X,L \right) \right) = \left\lbrace f \phi_p \left( e_L \right) : f\in  \mathcal{O} \left( \mathbb{C}^n \right)^G \right\rbrace  ,$$
where $ \mathcal{O} \left( \mathbb{C}^n \right)^G $ is the class of $G$-invariant holomorphic functions. 

Write $a(z) = h\left( e_L (p(z)) , e_L (p(z)) \right) $, then we have $ -\partial\partialbar \log (a) = 2\pi p^* \omega = \pi \partial\partialbar |z|^2 $, and $a\circ\sigma  =a$, $\forall \sigma \in G$. It follows that $\psi = \log (a) +\pi |z|^2 $ is pluriharmonic, and hence we can find a holomorphic function $f$ such that $Re(f)=\psi $. Then we have $ Re(f-f\circ\sigma ) =0 $, $\forall \sigma \in G$. It shows that $f-f\circ\sigma$ is a constant, and $f(0)=f(\sigma (0))$ implies that $f\in\mathcal{O} \left(\mathbb{C}^n \right)^G $. Replacing $e_L $ by $e^{-\frac{f}{2}} e_L$, we can assume that $a=e^{-\pi |z|^2}$. It is easy to check that
\begin{eqnarray*}
  \left\lbrace c_{j_1 ,\cdots ,j_n } z_1^{j_1} \cdots z_n^{j_n} e_L \right\rbrace_{\begin{subarray}{l}
j_1 ,\cdots ,j_n \in \mathbb{Z}_+  \\
\frac{j_1 p_1 }{q_1 } + \cdots + \frac{j_n p_n }{q_n } \in \mathbb{Z}
\end{subarray}}
\end{eqnarray*}
is an $L^2 $ orthonormal basis in $A^2 \left( X ,L \right) $, where
\begin{eqnarray*}
 c^{-2}_{j_1 ,\cdots ,j_n } = |G|^{-1} \int_{\mathbb{C}^n } e^{-\pi |z|^2} \left| z_1^{j_1} \cdots z_n^{j_n} \right|^2 dV_{\mathbb{C}^n},
\end{eqnarray*}
and $dV_{\mathbb{C}^n} = \left( \frac{ \sqrt{-1} }{2} \right)^n dz_1 \wedge d\zbar_1 \wedge\cdots\wedge dz_n \wedge d\zbar_n$ is the volume form on $\mathbb{C}^n$. Hence we have
\begin{eqnarray}
\rho_{\omega_{Euc} ,1} (p(z)) & = &  q e^{-\pi |z|^2} \sum_{\begin{subarray}{l}
j_1 ,\cdots ,j_n \in \mathbb{Z}_+  \\
\frac{j_1 p_1 }{q_1 } + \cdots + \frac{j_n p_n }{q_n } \in \mathbb{Z}
\end{subarray}}
\frac{ \pi^{j_1 +\cdots +j_n } \left| z_1^{j_1} \cdots z_n^{j_n} \right|^2 }{ \prod_{l=1}^{n} j_1 ! } . \label{fkpin}
\end{eqnarray}

Comparison of (\ref{fkpin}) and the Taylor expansion of exponential function shows that 
\begin{eqnarray*}
\rho_{\omega_{Euc} ,1} (p(z)) & = & e^{-\pi |z|^2} \sum_{k=0}^{q-1} \sum_{\begin{subarray}{l}
j_1 ,\cdots ,j_n \in \mathbb{Z}_+  \\
\frac{j_1 p_1 }{q_1 } + \cdots + \frac{j_n p_n }{q_n } \in \mathbb{Z}
\end{subarray}} \prod_{l=1}^{n} \frac{  \pi \left|  z_l \right|^{2 j_l} e^{\frac{ 2 j_l p_l k \pi\sqrt{-1} }{q_l}} }{ j_1 ! } \\
& = & e^{-\pi |z|^2}  \sum_{k=0}^{q-1} e^{ \pi \sum_{l=1}^{n} |z_l |^2 e^{\frac{ 2 k p_l \pi\sqrt{-1} }{q_l}} } .
\end{eqnarray*}
\end{proof}

We give some elementary estimates in the following result to estimate $\rho$ on a fixed-line.

\begin{lmm}
\label{numberlmm1}
Under the assumptions of Lemma \ref{lmmcircleorbifoldbgmkn}, if we assume in addition that $q\geq 3$, then there are constants $r_1 ,\cdots ,r_n \geq 0$, $1\leq j\leq q-1$ such that
\begin{eqnarray}
\sum_{l=1}^{n} r_l \cos \left( \frac{2jp_l \pi }{q_l} \right) > \sum_{l=1}^{n} r_l \cos \left( \frac{2k p_l \pi }{q_l} \right) ,\;  \label{rcosestimate}
\end{eqnarray}
for each $k\in \left\lbrace 1,\cdots ,q-1 \right\rbrace \sq \left\lbrace j,q-j \right\rbrace $, and
\begin{eqnarray}
\sum_{l=1}^{n} r_l \sin \left( \frac{2jp_l \pi }{q_l} \right) \neq 0 . \label{rsinestimate}
\end{eqnarray}
\end{lmm}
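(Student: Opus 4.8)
The plan is to restate the two inequalities as a single statement about where a nonnegative combination of cosines attains its maximum, and then to argue by induction on $q$.

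Set $\theta_l=2\pi p_l/q_l$, and for $r=(r_1,\dots,r_n)$ with all $r_l\ge 0$ put $F_r(k)=\sum_{l=1}^n r_l\cos(k\theta_l)$ and $G_r(k)=\sum_{l=1}^n r_l\sin(k\theta_l)$. Because $q_l\mid q$ we have $\cos((q-k)\theta_l)=\cos(k\theta_l)$, hence $F_r(q-k)=F_r(k)$; so $(\ref{rcosestimate})$ is exactly the assertion that the maximum of $F_r$ over $\{1,\dots,q-1\}$ is attained precisely on the set $\{j,q-j\}$, while $(\ref{rsinestimate})$ adds the requirement $G_r(j)\ne 0$. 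A coordinate with $q_l=1$ changes $F_r$ only by a constant and $G_r$ by $0$, so such coordinates may be dropped, and I will assume $q_l\ge 2$ for all $l$; since $q=\mathrm{lcm}(q_l)\ge 3$, at least one coordinate remains. The statement to be proved is then purely arithmetic, involving only the coprime pairs $(p_l,q_l)$ with $q=\mathrm{lcm}(q_l)\ge 3$, and I induct on $q$.

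\emph{Base case: $q_{l_0}=q$ for some $l_0$.} Take $r=e_{l_0}$ and let $j\in\{1,\dots,q-1\}$ be the residue with $jp_{l_0}\equiv 1\pmod q$. As $k$ runs over $\{1,\dots,q-1\}$ the residue $m=kp_{l_0}\bmod q$ runs bijectively over $\{1,\dots,q-1\}$, so $F_{e_{l_0}}(k)=\cos(2\pi m/q)$, which for $q\ge 3$ is strictly maximal exactly when $m\in\{1,q-1\}$, i.e.\ when $k\in\{j,q-j\}$; and $j\ne q-j$ because $2j\equiv 0\pmod q$ is impossible for an invertible $j$ when $q\ge 3$. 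Finally $G_{e_{l_0}}(j)=\sin(2\pi/q)\ne 0$. \emph{Inductive step: $q_l<q$ for all $l$.} There is a pivot $l_0$ with $q':=q/q_{l_0}\ge 3$: otherwise every $q_l$ divides $q$ and satisfies $q_l\ge q/2$, forcing $q_l=q/2$ for all $l$ and contradicting $\mathrm{lcm}(q_l)=q$. The set $S_{l_0}=\{q_{l_0}s:1\le s\le q'-1\}$ is exactly the set of $k\in\{1,\dots,q-1\}$ with $\cos(k\theta_{l_0})=1$, while $\cos(k\theta_{l_0})\le\cos(2\pi/q_{l_0})<1$ for the other $k$. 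If $R=r_{l_0}$ is large enough, namely $R\bigl(1-\cos(2\pi/q_{l_0})\bigr)>2\sum_{l\ne l_0}\tilde r_l$, then for every $k\notin S_{l_0}$ and $k'\in S_{l_0}$ one has $F_r(k)<F_r(k')$, so the argmax of $F_r$ over $\{1,\dots,q-1\}$ lies in $S_{l_0}$, where $F_r(q_{l_0}s)=R+\tilde F_{\tilde r}(s)$ with $\tilde F_{\tilde r}(s)=\sum_{l\ne l_0}\tilde r_l\cos(2\pi s\,\tilde p_l/\tilde q_l)$, $\tilde q_l=q_l/\gcd(q_l,q_{l_0})$ and $\tilde p_l\equiv\bigl(q_{l_0}/\gcd(q_l,q_{l_0})\bigr)p_l\pmod{\tilde q_l}$ coprime to $\tilde q_l$; a prime-by-prime valuation count gives $\mathrm{lcm}_{l\ne l_0}(\tilde q_l)=q'$. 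Thus $(\tilde p_l,\tilde q_l)_{l\ne l_0}$ is a legitimate instance of the lemma with modulus $q'\in\{3,\dots,q-1\}$, and the inductive hypothesis supplies weights $\tilde r_l\ge 0$ and an index $j'$ with $\tilde F_{\tilde r}$ maximal over $\{1,\dots,q'-1\}$ exactly on $\{j',q'-j'\}$ and $\tilde G_{\tilde r}(j')\ne 0$ (analogous sine sum). Then $j=q_{l_0}j'$ and $r=R\,e_{l_0}+\sum_{l\ne l_0}\tilde r_l\,e_l$ work: the argmax of $F_r$ over $\{1,\dots,q-1\}$ is exactly $\{q_{l_0}j',\,q_{l_0}(q'-j')\}=\{j,q-j\}$, which are distinct because $\tilde G_{\tilde r}(j')\ne 0$ forces $2j'\ne q'$; and since $q_{l_0}j'\,\theta_{l_0}=2\pi j'p_{l_0}\in 2\pi\mathbb{Z}$, the sine sum is $G_r(j)=\sum_{l\ne l_0}\tilde r_l\sin(2\pi j'\tilde p_l/\tilde q_l)=\tilde G_{\tilde r}(j')\ne 0$.

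The trigonometric inputs — $\cos(2\pi/q)$ being strictly largest among $\cos(2\pi m/q)$ for $0<m<q$, and the explicit threshold on $R$ — are routine. The part that needs care is the bookkeeping of the inductive step: verifying that the contracted pairs $(\tilde p_l,\tilde q_l)$ are again coprime with $\mathrm{lcm}=q'$, that the argmax of $F_r$ genuinely collapses onto $S_{l_0}$, and that discarding coordinates with $q_l=1$ is harmless. The existence of a pivot with $q/q_{l_0}\ge 3$ is the single point where both $q\ge 3$ and the lcm hypothesis are essential.
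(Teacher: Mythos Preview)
Your argument is correct, and it proceeds by a genuinely different induction than the paper's. The paper inducts on the dimension $n$: it peels off one coordinate $q_{m+1}$, applies the induction hypothesis to the remaining $q_1,\dots,q_m$ with $q'=\mathrm{lcm}(q_1,\dots,q_m)$, and then combines with the dropped coordinate; the 2-adic trick (choosing $q_1$ to carry the maximal power of $2$) is there solely to force $q/q'\ne 2$. You instead induct on the modulus $q$: you pick a pivot $l_0$ with $q/q_{l_0}\ge 3$, force the argmax into the subgroup $q_{l_0}\mathbb{Z}/q\mathbb{Z}$ by taking $r_{l_0}$ large, and then recognise the restriction as a new instance with strictly smaller modulus $q'=q/q_{l_0}$. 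Your pivot-existence argument (if every $q_l<q$ satisfied $q/q_l=2$ then $\mathrm{lcm}(q_l)=q/2$) is clean and avoids the 2-adic bookkeeping; in return you must verify $\mathrm{lcm}_{l\ne l_0}(\tilde q_l)=q'$, which your prime-by-prime valuation remark handles. Both approaches yield the same conclusion, but yours has the advantage that the hypothesis $q'\ge 3$ needed to invoke the induction is built directly into the choice of pivot, whereas in the paper's coordinate-dropping scheme one must separately worry about the case $q'\le 2$.
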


\begin{proof}
When $q_l = q$ for some $l$, we can assume that $q_1 = q$. Let $r_1 =1 $, $r_l =0 $ for each $l>1$, and let $j$ be the unique positive integer such that $q | jp_1 -1$ and $j<q$. Then $q>3$ implies the estimates (\ref{rcosestimate}) and (\ref{rsinestimate}).

In general, we would like to use induction on dimension. For $n=1$, we have $q_1 =q$, and we have proved the lemma in this case. Assume that the statement holds for $n=m$ and that we have $n=m+1$. It follows from elementary number theory that there exists an integer $ t\geq 0 $, such that $2^t \mid q_l $ for some $l$, but $2^{t+1} \nmid q_l $ for each $l$. We can assume that $2^t \mid q_{1} $. Let $q'$ be the least common multiple of $q_1 ,\cdots ,q_{m} $. Then we can conclude that $\frac{q}{q'} \neq 2 $.

By the induction hypothesis, there are constants $r_1 ,\cdots ,r_m \geq 0$, $1\leq j' \leq q' -1$ such that
$$\sum_{l=1}^{m} r_l \cos \left( \frac{2j' p_l \pi }{q_l} \right) > \sum_{l=1}^{m} r_l \cos \left( \frac{2k p_l \pi }{q_l} \right) ,\;$$
for each $k\in \left\lbrace 1,\cdots ,q'-1 \right\rbrace \sq \left\lbrace j',q' -j' \right\rbrace $, and
$$\sum_{l=1}^{m} r_l \sin \left( \frac{2j' p_l \pi }{q_l} \right) \neq 0 .$$
If $\frac{q}{q'} =1 $, the proof is finished. Now we assume that $\frac{q}{q'} \geq 3 $. A trivial verification shows that there are integers $p'_{m+1}$ and $q'_{m+1}$, such that $ \frac{p'_{m+1}}{q'_{m+1}} = \frac{p_{m+1} q' }{q_{m+1}} $, $q'_{m+1} =\frac{q}{q'} \geq 3 $, and $p'_{m+1} ,q'_{m+1}$ are co-prime. We now apply the induction hypothesis again to conclude that there exists a constant $1\leq j'' \leq q'_{m+1} -1$ such that
$$ \cos \left( \frac{2j'' p'_{m+1} \pi }{q'_{m+1}} \right) >  \cos \left( \frac{2k p'_{m+1} \pi }{q'_{m+1}} \right) ,\;$$
for each $k\in \left\lbrace 1,\cdots ,q'_{m+1}-1 \right\rbrace \big\sq \left\lbrace j'',q'_{m+1} -j'' \right\rbrace $, and
$$ \sin \left( \frac{2j'' p'_{m+1} \pi }{q'_{m+1}} \right) \neq 0 .$$
Let $j= j'' q' +j' $, $r_{m+1} = 1+ \left| \frac{\sum_{l=1}^{m} r_l \sin \left( \frac{2j' p_l \pi }{q_l} \right)}{\sin \left( \frac{2j'' p'_{m+1} \pi }{q'_{m+1}} \right)} \right| $. It is easy to check that (\ref{rcosestimate}) and (\ref{rsinestimate}) hold in this case. This concludes the induction step, and the proof is complete.
\end{proof}

As a corollary, we have:

\begin{lmm}
\label{lmmxy1}
Under the assumptions of Lemma \ref{lmmcircleorbifoldbgmkn}, if we assume in addition that $q\geq 3$, then we can find $x\in X $ such that $\rho_{\omega_{Euc} ,1} (x) < 1 $.
\end{lmm}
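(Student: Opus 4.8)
The plan is to feed the nonnegative reals $r_1,\dots,r_n$ and the index $j_0\in\{1,\dots,q-1\}$ produced by Lemma \ref{numberlmm1} into the explicit formula (\ref{equacircletionbgmkn}), after rescaling the weights by a large factor $t>0$. Concretely I would take $z=z(t)$ with $|z_l|^2=t\,r_l$ (for instance $z_l=\sqrt{t r_l}$) and set $x=p(z)$. Writing $\theta_{l,k}=\tfrac{2kp_l\pi}{q_l}$, $a_k=\sum_{l=1}^{n} r_l(\cos\theta_{l,k}-1)\le 0$ and $b_k=\sum_{l=1}^{n} r_l\sin\theta_{l,k}$, the substitution $|z_l|^2=tr_l$ turns (\ref{equacircletionbgmkn}) into $\rho_{\omega_{Euc},1}(x)=\sum_{k=0}^{q-1}\exp\bigl(\pi t\sum_l r_l(e^{\sqrt{-1}\theta_{l,k}}-1)\bigr)$; since $\rho_{\omega_{Euc},1}$ is real-valued, taking real parts and separating the $k=0$ term gives
\[
\rho_{\omega_{Euc},1}(x)=1+\sum_{k=1}^{q-1}e^{\pi t a_k}\cos(\pi t b_k).
\]

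Next I would isolate the dominant contribution. By (\ref{rsinestimate}), $b_{j_0}\ne 0$, so some $l$ has $r_l>0$ and $\sin\theta_{l,j_0}\ne 0$, whence $\cos\theta_{l,j_0}<1$; since every $r_l(\cos\theta_{l,j_0}-1)\le 0$, this forces $a:=a_{j_0}<0$. Moreover $q=2j_0$ would make each $\theta_{l,j_0}=\tfrac{2j_0p_l\pi}{q_l}$ an integer multiple of $\pi$ (as $q_l\mid q$), hence $b_{j_0}=0$; so $j_0\ne q-j_0$. From $q_l\mid q$ one also gets $\theta_{l,q-j_0}\equiv-\theta_{l,j_0}\pmod{2\pi}$, so $a_{q-j_0}=a$ and $b_{q-j_0}=-b_{j_0}$, and the $k=j_0$ and $k=q-j_0$ terms together contribute $2e^{\pi t a}\cos(\pi t b_{j_0})$. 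For every remaining $k\in\{1,\dots,q-1\}\sq\{j_0,q-j_0\}$, inequality (\ref{rcosestimate}) gives $a_k<a$, so with $a':=\max\{a_k:k\notin\{0,j_0,q-j_0\}\}<a$ (interpreted as $e^{\pi t a'}=0$ when no such $k$ exists, i.e. when $q=3$),
\[
\rho_{\omega_{Euc},1}(x)=1+2e^{\pi t a}\cos(\pi t b_{j_0})+E(t),\qquad |E(t)|\le (q-3)e^{\pi t a'}.
\]

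Finally, since $b_{j_0}\ne 0$, the set $\{t>0:\cos(\pi t b_{j_0})\le-\tfrac12\}$ is a periodic union of intervals of positive length, hence contains arbitrarily large $t$. For such a $t$,
\[
\rho_{\omega_{Euc},1}(x)\le 1-e^{\pi t a}+(q-3)e^{\pi t a'}=1-e^{\pi t a}\bigl(1-(q-3)e^{\pi t(a'-a)}\bigr),
\]
and because $a'-a<0$ the bracketed factor is positive once $t$ is large; as $e^{\pi t a}>0$, this gives $\rho_{\omega_{Euc},1}(x)<1$, which is the assertion.

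I do not expect a serious obstacle: the arithmetically delicate step --- choosing weights so that a single character strictly dominates the rest while keeping its oscillatory phase $b_{j_0}$ nonzero --- is exactly what Lemma \ref{numberlmm1} supplies, and the rest is routine asymptotic bookkeeping of exponentials. The one point needing a little care is verifying $a_{j_0}<0$ and $j_0\ne q-j_0$, i.e. that the distinguished frequency genuinely has strictly negative real exponent; both follow cleanly from $b_{j_0}\ne 0$ together with $q_l\mid q$.
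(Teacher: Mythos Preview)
Your proposal is correct and follows essentially the same approach as the paper: substitute $|z_l|^2=t r_l$ into (\ref{equacircletionbgmkn}), isolate the dominant pair of terms indexed by $j_0$ and $q-j_0$ using Lemma \ref{numberlmm1}, and then pick $t$ large so that the oscillatory factor is sufficiently negative. Your write-up is in fact slightly more careful than the paper's --- you explicitly verify $a_{j_0}<0$ and $j_0\ne q-j_0$, and you give a quantitative remainder bound rather than a little-$o$ statement --- but the argument is the same.
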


\begin{proof}
Let $ z=\left( t \sqrt{r_1 } ,\cdots , t \sqrt{r_n} \right) $ for some $t>0$. Then (\ref{equacircletionbgmkn}) implies that
\begin{eqnarray*}
\rho_{\omega_{Euc} ,1} \left( p\left( z \right) \right) & = & 1 + 2 e^{\pi t^2  \sum_{l=1}^{n} r_l \left( \cos \left( \frac{2jp_l \pi }{q_1} \right) -1 \right) } \cos \left( t^2 \sum_{l=1}^{n} r_l \sin \left( \frac{2jp_l \pi }{q_1} \right) \right) \\
& + & o\left( e^{\pi t^2  \sum_{l=1}^{n} r_l \left( \cos \left( \frac{2jp_l \pi }{q_1} \right) -1 \right) } \right)
\end{eqnarray*}
as $t\to\infty $. Hence we can find a sufficiently large integer $k$, such that $\rho_{\omega_{Euc} ,1} \left( p\left( z \right) \right) <1 $ for $$t  = \sqrt{ \frac{ (2k+1 )\pi }{ \sum_{l=1}^{n} r_l \sin \left( \frac{2jp_l \pi }{q_1} \right) } } .$$ Then the result follows.
\end{proof}

By Ding-Tian's argument in \cite{dt1}, there exists a sequence of K\"ahler–Einstein Del Pezzo surfaces converges to a K\"ahler–Einstein orbifold $\mathbb{C}P^2 /\mathbb{Z}_3 $ with three $A_2$ singularities in the Gromov-Hausdorff sense. See also \cite{oss1}. Then we can prove Theorem \ref{dscounterexp} by approximating these cones. By Theorem \ref{thmcontinuousbergmankernel} and Corollary \ref{corouniquehermitianmetric}, the proof of Theorem \ref{refinementofbkk} boils down to the following proposition.

\begin{prop}
\label{propndimobfdcounterexp}
Let $(X,\omega )$ be an $n$-dimensional K\"ahler orbifold with cyclic quotient singularities, and $\linebundle $ be an ample line bundle on $X$ equipped with a hermitian metric $h$ whose curvature form is $2\pi \omega_X $. Assume that $\left| G_x \right| \geq 3 $ for some $x\in X$, then we can find a constant $\epsilon >0$ and a sequence $x_j $ in $X$, such that
$$ \limsup_{j\to\infty } \left( j^{-n}\rho_{\omega_X ,j } \left( x_j \right) \right) \leq 1 -\epsilon .$$
\end{prop}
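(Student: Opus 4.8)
The plan is to rescale the metric near the orbifold point $x$ and to compare the resulting Bergman kernels with the explicit model computed in Lemma~\ref{lmmcircleorbifoldbgmkn}. Write $q=|G_x|\geq 3$. After averaging an auxiliary Hermitian form and diagonalizing the generator of $G_x$, one may choose a local uniformizing chart $\pi:\tilde B\subset\mathbb{C}^n\to U\ni x$ in which $G_x=\mathbb{Z}_q$ acts by $z_i\mapsto e^{2\pi\sqrt{-1}p_i/q_i}z_i$ with $p_i,q_i$ coprime and $q=\mathrm{lcm}(q_i)$ (effectiveness of the action forces the generator to have order $q$, and then $\mathrm{lcm}(q_i)=q$), and in which $\pi^*\omega_X$ agrees with $\omega_{Euc}$ to first order at $0$; let $\chi:\mathbb{Z}_q\to U(1)$ be the character by which $G_x$ acts on the fibre $(\linebundle)_x$. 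Since the Ricci form is scale invariant, $(\linebundle^j,h^j)$ polarizes $(X,j\omega_X)$, and rescaling the $L^2$ norm gives $j^{-n}\rho_{\omega_X,j}(y)=\rho_{(X,j\omega_X),\linebundle^j,h^j,1}(y)$ for every $y$, so it suffices to bound the latter along a suitable sequence of points as $j\to\infty$.

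I would then split according to $\chi^j$. If $\chi^j\neq 1$, a nonzero vector of $(\linebundle^j)_x$ cannot be $\chi^j$-invariant, so every $s\in H^0(X,\linebundle^j)$ vanishes at $x$ and $\rho_{\omega_X,j}(x)=0$; here I take $x_j=x$. For the remaining $j$ --- those divisible by $\mathrm{ord}(\chi)$, infinitely many --- rescaling the chart by $w=\sqrt{j}\,z$ makes the rescaled data converge, in the pointed Gromov--Hausdorff sense and with complex structures and Hermitian line bundles, to the flat orbifold $\mathbb{C}^n/\mathbb{Z}_q$ with the model polarization $(\mathbb{C},e^{-\pi|w|^2})$ of Lemma~\ref{lmmcircleorbifoldbgmkn}: indeed $j\,\pi^*\omega_X=\omega_{Euc}\,(1+O(|w|/\sqrt{j}))$ and, in a $\mathbb{Z}_q$-equivariant frame, $h^j=e^{-\pi|w|^2+O(|w|^3/\sqrt{j})}$, the factor $|G|^{-1}$ in Lemma~\ref{lmmcircleorbifoldbgmkn} being exactly what identifies the limit of the rescaled volume forms with the $2n$-dimensional Hausdorff measure on $\mathbb{C}^n/\mathbb{Z}_q$. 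Letting $z_0$ be the point produced by Lemma~\ref{lmmxy1} (so $\rho_{\omega_{Euc},1}(z_0)<1$) and $x_j\in X$ the point corresponding to $z_0/\sqrt{j}$, the upper semicontinuity of Bergman kernels --- the argument of Lemma~\ref{lmmuppercontinuitybergman}: extract near-extremal sections, pass to a locally uniform holomorphic limit by normal families, and apply Fatou together with volume convergence, which uses no $L^2$ estimate (hence no hypothesis relating $m$ to $\Lambda$) and is unchanged for orbifolds --- yields $\limsup_{j\to\infty}j^{-n}\rho_{\omega_X,j}(x_j)\leq\rho_{\omega_{Euc},1}(z_0)$.

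Putting the two cases together, the sequence $\{x_j\}$ just described satisfies $\limsup_{j\to\infty}j^{-n}\rho_{\omega_X,j}(x_j)\leq\max\{0,\rho_{\omega_{Euc},1}(z_0)\}=\rho_{\omega_{Euc},1}(z_0)$, so the proposition holds with $\epsilon:=1-\rho_{\omega_{Euc},1}(z_0)>0$. One could instead invoke an orbifold version of Theorem~\ref{thmcontinuousbergmankernel} outright, but then one has to respect the restriction $2\pi m>\Lambda$, which is why I prefer to use only the upper semicontinuity direction here.

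The hard part will be the bookkeeping in the second case: arranging the uniformizing chart so that the rescaled K\"ahler metrics and the rescaled Hermitian metric on $\linebundle^j$ genuinely converge to the flat model and its polarization, keeping careful track of the linearization character $\chi$ of $\linebundle$ at $x$, and checking that the measure in the model Bergman kernel of Lemma~\ref{lmmcircleorbifoldbgmkn} is precisely the limit of the rescaled volume forms. The genuinely new input --- that the model orbifold has a point where the first Bergman kernel is $<1$ once $q\geq 3$ --- is already isolated in the elementary Lemmas~\ref{numberlmm1} and~\ref{lmmxy1}, so no additional hard estimate is needed.
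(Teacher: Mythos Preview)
Your proposal is correct and follows essentially the same approach as the paper: rescale near the orbifold point so that $(X,j\omega_X,\linebundle^j,x)$ converges to the flat model $(\mathbb{C}^n/\mathbb{Z}_q,\omega_{Euc},\mathbb{C},0)$, split on whether $\linebundle^j$ is locally trivial at $x$ (your condition $\chi^j=1$ is exactly the paper's ``$L^m|_{U_x}$ trivial''), and then combine Lemma~\ref{lmmxy1} with the upper semicontinuity of Bergman kernels. The paper packages this as a contradiction argument (assume $j_k^{-n}\inf_x\rho_{j_k}\to 1$, deduce $\rho_{\omega_{Euc},1}\geq 1$ on the model, contradict Lemma~\ref{lmmxy1}) while you give the direct construction, but the logical content is identical; your explicit remark that only the upper-semicontinuity direction is needed---hence no constraint like $2\pi m>\Lambda$---is a useful clarification the paper leaves implicit.
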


\begin{proof}
We argue by contradiction. Suppose that there exists a strictly increasing sequence of integers $\left\lbrace j_k \right\rbrace_{k=1}^{\infty}$ such that $  j^{-n}_{k} \inf_{x\in X} \rho_{\omega_X ,j_k} (x) \to 1$ as $k\to\infty $. Choose a point $x\in X$ such that $\left| G_x \right|\geq 3$. Then there exists a neighborhood $U_x$ of $x$ such that $U_x \cong B_r (0) /G $, where $G$ is a finite subgroup of $U(n)$, $r>0$ is a constant, and $G\cong G_x $. Clearly, for each $m\in\mathbb{N}$, we have $\rho_{\omega_X ,m} (x) =0 $ if $L^{m} \big|_{U_x}$ is not trivial. By the hypothesis, we can assume that $L^{j_k} \big|_{U_x}$ is trivial line bundle, $\forall k\in \mathbb{N} $.

Let $f: B_r (0) /G \to U_x $ be an isomorphism, and $p : \mathbb{C}^n \to \mathbb{C}^n /G $ be the quotient map. Then the form $ \omega = p^* \circ f^* \left( \omega_X \right) $ can be extended to be a $G$-invariant K\"ahler metric on $B_r (0)$, and we can assume that $\omega = \omega_{Euc} + O(|z|) $, where $|z|\to 0$. Then we see that $\left( X,j_k \omega_X ,\linebundle^{j_k} ,x \right) $ converge to $\left( \mathbb{C}^n /G , \omega_{Euc} ,\mathbb{C} ,0 \right) $. By the assumption, $\rho_{\omega_{Euc} ,1} \geq 1$ on $\mathbb{C}^n /G $, contradiction.
\end{proof}

In the case $n=1$, $\left|\Ric \right|\leq \Lambda $ implies that the sectional curvature is bounded, and hence the $C^0$ asymptotic expansion $\rho_{\omega ,m} \sim m^n $ is uniform \cite{sxz1}. The following result is an analogy of Theorem \ref{dscounterexp} when the assumption of an upper bound for the Ricci curvature is removed.

\begin{prop}
\label{prop1dimcounterexp}
There are constants $\epsilon ,d>0$ and a sequence of $1$-dimensional polarized pointed K\"ahler manifolds $\left( x_j , \mathbb{C}P^1 ,\mathcal{O} (1) ,\omega_j \right) $ such that $ \diam \left( \mathbb{C}P^1 ,\omega_j \right) \leq d $, $\Ric \left( \omega_j \right) \geq \epsilon \omega_j $, and the Bergman kernels satisfying that
$$ \limsup_{j\to\infty } \left( j^{-n}\rho_{\omega_j ,j } \left( x_j \right) \right) \leq 1 -\epsilon .$$
\end{prop}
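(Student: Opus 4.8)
The plan is to run the argument behind Theorem \ref{dscounterexp} in complex dimension one, with the Ding--Tian degeneration of K\"ahler--Einstein del Pezzo surfaces replaced by an explicit smoothing of a spindle orbifold. Concretely, I would realize a one-dimensional K\"ahler orbifold carrying a cyclic quotient singularity of order $q\ge 3$ as a pointed Gromov--Hausdorff limit of smooth polarized K\"ahler metrics on $\mathbb{C}P^1$ having a uniform \emph{positive} Ricci lower bound, bounded diameter and a non-collapsing bound, and then use Theorem \ref{thmcontinuousbergmankernel} together with Proposition \ref{propndimobfdcounterexp} to transport the sub-unity behaviour of the Bergman kernel from the orbifold down to these manifolds.

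First I would fix $q\ge 3$ and take $Y$ to be the spindle (``football'') orbifold: the quotient of the round $\mathbb{C}P^1$ by a rotation $\mathbb{Z}_q$-action, with the induced constant-curvature metric $\omega_Y$ rescaled so that $\int_Y\omega_Y=1$. Then $Y$ has underlying complex space $\mathbb{C}P^1$, two cyclic quotient singularities of order $q$, satisfies $\Ric(\omega_Y)=(4\pi/q)\,\omega_Y\ge \epsilon_0\omega_Y$ and has finite diameter $d_0$; the line bundle $\mathcal{O}(1)$ carries a Hermitian metric $h_Y$ with $\Ric(h_Y)=2\pi\omega_Y$ and is orbifold-trivial near each singular point, since $\mathbb{Z}_q$ acts with trivial weight on its fibres there. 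Applying Proposition \ref{propndimobfdcounterexp} to $(Y,\omega_Y,\mathcal{O}(1),h_Y)$ (with $|G_x|=q\ge 3$ at a singular point) produces a constant $\epsilon_1>0$ and points $y_j\in Y$ with $\limsup_{j\to\infty} j^{-1}\rho_{\omega_Y,j}(y_j)\le 1-\epsilon_1$; internally this rescales $(Y,j\omega_Y)$ to the flat cone $\mathbb{C}/\mathbb{Z}_q$ and invokes Lemma \ref{lmmxy1}.

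Next I would build the approximating family $(\mathbb{C}P^1,\omega_l,\mathcal{O}(1),h_l)$. Outside two small coordinate discs around the singular points I set $\omega_l=\omega_Y$; inside each disc I replace the conical conformal factor $|u|^{2/q-2}$ by a smooth one, e.g. a constant multiple of $(|u|^2+\delta_l^2)^{1/q-1}$ with $\delta_l\to 0$, matched $C^\infty$-ly to $\omega_Y$ across an annulus. A direct computation gives Gauss curvature $(1-1/q)\cdot 2\delta_l^2(|u|^2+\delta_l^2)^{-1-1/q}>0$ on the cap, so after normalizing the total area to $1$ one obtains smooth polarized K\"ahler manifolds with $\Ric(\omega_l)\ge\epsilon_2\omega_l$ for a fixed $\epsilon_2>0$, $\diam(\mathbb{C}P^1,\omega_l)\le d_0+1$, a uniform non-collapsing bound $\Vol(B_1(x))\ge v>0$ (the area is $1$, the diameter is bounded and the two caps have negligible area), $\Ric(h_l)=2\pi\omega_l$, $(\mathbb{C}P^1,\omega_l)\to(Y,\omega_Y)$ in the pointed Gromov--Hausdorff sense, and $(\mathcal{O}(1),h_l)\to(\mathcal{O}(1),h_Y)$ on $Y_{reg}$ (the complex structures are all the standard one, so their convergence is automatic). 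Since $\Ric(\omega_l)\ge 0$, the constant $\Lambda$ in Theorem \ref{thmcontinuousbergmankernel} may be chosen below $2\pi$, hence $\rho_{\omega_l,m}\to\rho_{\omega_Y,m}$ for every integer $m\ge 1$.

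Finally I would conclude by a diagonal argument: for each large $j$, using $\rho_{\omega_l,j}\to\rho_{\omega_Y,j}$ as $l\to\infty$ and $\rho_{\omega_Y,j}(y_j)\le(1-\epsilon_1)j$, I pick an index $l_j$ (with $l_j\to\infty$) and a point $x_j\in\mathbb{C}P^1$ near $y_j$ for which $j^{-1}\rho_{\omega_{l_j},j}(x_j)\le 1-\frac{1}{2}\epsilon_1$; relabelling $\omega_{l_j}$ as $\omega_j$ and taking $\epsilon=\min\{\frac{1}{2}\epsilon_1,\epsilon_2\}$ gives the sequence asserted in the proposition. The main obstacle is the smoothing of the third step: one must keep the Ricci curvature bounded below by a \emph{positive} constant while resolving the cone point, where in the orbifold the curvature sits as a positive atom at the tip; the explicit cap above is positively curved, so the delicate point is only the $C^\infty$ interpolation across the gluing annulus, which must be arranged so that the curvature does not dip below $\epsilon_2$ there — and this is exactly the place where the absence of an upper Ricci bound is used.
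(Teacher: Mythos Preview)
Your proposal is correct and follows the same overall strategy as the paper: approximate a one-dimensional K\"ahler orbifold with a cyclic quotient singularity of order $\ge 3$ by smooth polarized metrics on $\mathbb{C}P^1$ with a uniform \emph{positive} Ricci lower bound, and then invoke Theorem \ref{thmcontinuousbergmankernel} together with the orbifold Bergman kernel computation (Lemma \ref{lmmxy1} / Proposition \ref{propndimobfdcounterexp}) to produce points where $j^{-1}\rho_{\omega_j,j}$ stays below $1-\epsilon$.

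The difference lies only in how the smoothing is realised. The paper writes the metrics directly as warped products $dr^2+\psi_k(r)^2\,d\theta^2$, where $\psi_k$ is obtained by smoothing an explicit piecewise--sine function $f_k$ engineered so that $-f_k''\ge 2\kappa f_k$ holds on the whole interval; the Gromov--Hausdorff limit has a single $\mathbb{Z}_3$ cone point, and the verification of the uniform curvature lower bound is then a one-line check. You instead start from the spindle $\mathbb{C}P^1/\mathbb{Z}_q$ (two cone points), smooth the conformal factor near each tip, and route through Proposition \ref{propndimobfdcounterexp} plus a diagonal argument. Conceptually this is the same mechanism; your presentation has the virtue of making the link to Proposition \ref{propndimobfdcounterexp} and the model computation on $\mathbb{C}/\mathbb{Z}_q$ completely transparent.

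The one point that is genuinely sketchy in your outline --- and which you correctly flag --- is the gluing annulus. Your cap $(|u|^2+\delta_l^2)^{1/q-1}|du|^2$ has Gauss curvature $\sim \delta_l^2/(|u|^2+\delta_l^2)^{1+1/q}$, which tends to $0$ at any fixed $|u|>0$ as $\delta_l\to 0$; so if the interpolation region is kept at a fixed scale, the lower Ricci bound degenerates. One must let the gluing radius shrink with $\delta_l$ (equivalently, work in the warped-product form and interpolate the profile function while preserving concavity in the sense $-f''/f\ge \kappa$). This is exactly what the paper's piecewise--sine construction accomplishes explicitly, so the gap is technical rather than conceptual.
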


\begin{proof}
For each $k\in\mathbb{N}$, let $f_k $ be defined by
\begin{displaymath}
f_k (r) = \left\{ \begin{array}{ll}
\frac{1}{2k} \sin \left( 2 k r \right) , & r\in \left[ 0, \alpha_k \right)  ,\\
\frac{\sqrt{2}}{3k} + \frac{1}{3} \sin \left( r- \alpha_k \right)  , & r\in \left[ \alpha_k , \alpha_k +\frac{\pi}{2} \right)  ,\\
\frac{k+ \sqrt{2}}{3k} \sin \left( \frac{\pi}{2} + \frac{3k\left( r-\alpha_k -\frac{\pi}{2} \right)}{k+\sqrt{2}} \right)   , & r\in \left[ \alpha_k +\frac{\pi}{2} , \alpha_k +\frac{4k +\sqrt{2}}{6k} \pi \right)  , \\
0 ,& \textrm{otherwise,}
\end{array} \right.
\end{displaymath}
where $\alpha_k =\frac{ \arccos\left( \frac{1}{3} \right) }{2k}$. It is clear that $f_k \in C^{1,1} \left( \left[ 0,  \alpha_k +\frac{4k +\sqrt{2}}{6k} \pi \right] \right) $, and there exists a constant $\kappa >0$ such that $-f''_k \geq 2\kappa f_k $ on $\left[ 0,  \alpha_k +\frac{4k +\sqrt{2}}{6k} \pi \right] $. 

By smoothing the functions $f_k$, we can conclude that there exists a sequence of nonnegative functions $\psi_{k} \in C^{2} \left( \left[ 0,  \alpha_k +\frac{4k +\sqrt{2}}{6k} \pi \right] \right) $ such that $ -\psi''_k \geq \kappa \psi_k $, and $\phi_k =f_k $ on $\left[ 0, \frac{1}{2} \alpha_k \right] \cup \left[ \alpha_k +\frac{2\pi }{3} ,  \alpha_k +\frac{4k +\sqrt{2}}{6k} \pi \right] $, $\forall k\in\mathbb{N}$. Then $dr^2 +\psi^2_k d\theta^2 $ gives a sequence of K\"ahler metrics $\omega_{1,k} $ on $\mathbb{C} P^1 \cong S^2 $, satisfying that $\Ric \left( \omega_{1,k} \right) \geq \kappa \omega_{1,k} $, $\Vol \left( \omega_{1,k} \right) \geq \frac{1}{10} $, $\forall k\in\mathbb{N}$, and the Gromov-Hausdorff limit of $\left( \mathbb{C}P^1 ,\omega_{1,k} \right)$ is a K\"ahler orbifold with only one singular point of type $\mathbb{C}/ \mathbb{Z}_3 $. By rescaling we can find a sequence of K\"ahler metrics $\omega_k $ on $\mathbb{C}P^1 $, such that $\Ric \left( \omega_k \right) \geq \frac{\kappa}{10} \omega_k $ and $\Vol \left( \omega_{k} \right) =1 $, $\forall k\in\mathbb{N}$. It follows that $\omega_{k} $ are polarized K\"ahler metrics, and we can find a sequence of domains $U_k =B_{\frac{1}{10} } \left( y_k \right) \subset \left( \mathbb{C}P^1 ,\omega_k \right) $ such that the metric $\omega_k $ can be represented by $dr^2 +\psi^2_k d\theta^2 $ on $ U_k $.

After passing to a subsequence, we can find a non-descending sequence of integers $\left\lbrace k_j \right\rbrace_{j=1}^{\infty} $ such that the sequence of functions $j^{\frac{1}{2}} \psi_{k_j} \left( j^{-\frac{1}{2}} r \right) $ converges uniformly to $\frac{r}{3}$ on $[0,R]$, $\forall R>0$. Since the metric $j\omega_{k_j} $ can be represented by $dr^2 +j\psi^2_{k_j} \left( j^{-\frac{1}{2}} r \right) d\theta^2 $ on $ U_k $, we can conclude that there are holomorphic sections $e_j \in H^0 \left( U_{k_j} ,\mathcal{O} (j) \right) $ such that $$ \lim_{j \to\infty } \sup_{U_{k_j}} \left| \left\Vert e_j \right\Vert^2 -e^{-\pi r^2} \right| =0 .$$ 
It is easy to check that the metric cone $\left( \mathbb{R}^2 , dr^2 + \frac{r^2}{9} d\theta^2 \right) $ is isomorphic to $\left( \mathbb{C} /\mathbb{Z}_3 ,\omega_{Euc} \right) $. Then this theorem follows from a similar argument as in Theorem \ref{dscounterexp}.
\end{proof}


\end{document}